\def\ep{{\varepsilon}}
\def\R{\mathbb R}
\def\N{\mathbb N}
\newtheorem{theorem}{\textbf{Theorem}}[section]
\newtheorem{lemma}[theorem]{\textbf{Lemma}}
\newtheorem{proposition}[theorem]{\textbf{Proposition}}
\newtheorem{assumption}[theorem]{\textbf{Assumption}}
\theoremstyle{remark}
\newtheorem{remark}[theorem]{\textbf{Remark}}
\numberwithin{equation}{section}
\title{The Bramson correction in the Fisher-KPP equation: from delay to advance
}
\date{}
\begin{document}

%%%%%%%%%%%%%%%%%%%%%%%%%%%%%%%%%%%%%
\maketitle

\begin{center}
{\large\bf Matthieu Alfaro\footnote{Univ. Rouen Normandie, CNRS, LMRS UMR 6085, F-76000 Rouen, France.}, Thomas Giletti\footnote{Univ. Clermont Auvergne, LMBP UMR 6620, 63178 Aubi\`{e}re, France.} and Dongyuan Xiao\footnote{The University of Tokyo, 3 Chome-8-1 Komaba, Meguro City, Tokyo 153-8914, Japan.} }
\end{center}

\vspace{15pt}
%%%%%%%%%%%%%%%%%%%%%%%%%%%%%%%%%%%%%

%%%%%%%%%%%%%%%%%%%%%%%%%%%%%%%%%%%%%
\tableofcontents

%\vspace{10pt}
%%%%%%%%%%%%%%%%%%%%%%%%%%%%%%%%%%%%%

%%%%%%%%%%%%%%%%%%%%%%%%%%%%%%%%%%%%%
\begin{abstract} 
We consider the solution to the scalar Fisher-KPP equation with front-like initial data, focusing on the location of its level sets at large times, particularly their deviation from points moving at the known spreading speed. We consider an intermediate case for the tail of the initial data, where the decay rate approaches, up to a polynomial term, that of the traveling wave with minimal speed. This approach enables us to capture deviations of the form $-r \ln t$ with $r < \frac{3}{2}$, which corresponds to a logarithmic delay when $0 < r < \frac{3}{2}$ and a logarithmic advance when $r < 0$.  The critical case  $r=\frac 32$ is also studied, revealing an extra $\mathcal O(\ln \ln t)$ term. Our arguments involve the construction of new sub- and super-solutions based on preliminary formal computations on the equation with a moving Dirichlet condition. Finally, convergence to the traveling wave with minimal speed is addressed.
\\

\noindent{\underline{Key Words:} large-time behavior, Fisher-KPP equation, logarithmic advance, logarithmic delay.}\\

\noindent{\underline{AMS Subject Classifications:} 35K57, 35B40, 35C07.}
\end{abstract}
%%%%%%%%%%%%%%%%%%%%%%%%%%%%%%%%%%%%%

\section{Introduction}\label{s:intro}

\noindent
This work is concerned with the large-time behavior of $u=u(t,x)$, the solution to the Fisher-KPP equation
\begin{equation}\label{eq}
 \partial _t u   = \partial _{xx} u+u(1-u), \quad t>0,\;  x\in \R,
\end{equation}
starting from a {\it front-like} initial data $u_0$ such that (see  below for a precise statement)
\begin{equation}\label{roughly}
u_0(x) \; \text{ \lq\lq behaves like'' } \;  x^k e^{-x} \text{ as } x\to +\infty, \quad \text{for some } k\geq  -2. 
\end{equation}
As previously established, the level sets of the solution asymptotically move (as $t \to +\infty$) at a constant spreading speed, often denoted by~$c^*$, which is also the minimal traveling wave speed. However, it is expected that the precise location of the level sets deviates slightly from the moving frame associated with this spreading speed. Through PDE techniques, we capture this (logarithmic) correction and, while the solution approaches a family of shifted minimal traveling waves, we find that it may either lag behind (when $-2\leq k<1$) or slightly outpace (when $k>1$) the moving frame with the exact speed~$c^*$.

\medskip

It is a well-established \cite{Aro-Wei-78}, \cite{Rothe-78} that, for large classes of front-like initial data (e.g. compactly supported or exponentially decaying as $x\to +\infty$), the solution of the Fisher-KPP equation~\eqref{eq} spreads with some speed $c\geq c^*=2$, in the sense that
\begin{equation}\label{spreading-bis}
\min _{x\leq c_1t} u(t,x) \to 1 \text{ as  } t\to +\infty \text{ if }
c_1<c, \quad \max _{x\geq c_2t} u(t,x)\to 0 \text{ as  } t\to +\infty
\text{ if } c_2>c.
\end{equation}
The steepness of the right tail of the initial data significantly influences the position of the level sets. In particular, if the initial data has a ``heavy tail'', meaning it decays more slowly than any exponential as $x \to +\infty$, then the spreading speed is infinite and acceleration occurs~\cite{Ham-Roq-10}.

Furthermore, in the case of exponential decay, and using a similar terminology to that of~\cite{Van-03}, one should distinguish between the {\it flat} case
$$
u_0(x) \; \text{ \lq\lq behaves like'' } \;  e^{-\lambda x} \text{ as } x\to +\infty, \quad \text{for some } \lambda<1, 
$$
and the {\it sufficiently steep} case
$$
u_0(x) \; \text{ \lq\lq behaves like'' } \;  e^{-\lambda x} \text{ as } x\to +\infty, \quad \text{for some } \lambda>1.
$$ 
In the flat case, the spreading speed is given by the dispersion relation $c= \frac{\lambda^2 +1}{\lambda}>c^*=2$, and there is no logarithmic correction. Specifically, the position of any level set of the solution behaves as $ct + \mathcal{O} (1)$ as $t \to +\infty$. We refer to \cite{Rothe-78}, \cite{Uch-78} for even stronger results on the convergence to a traveling wave. On the other hand, in the sufficiently steep case, the spreading speed is $c=c^*=2$, but the position of any level set of the solution behaves as $2t - \frac{3}{2}\ln t + \mathcal{O}(1)$ as $t\to +\infty$, indicating a logarithmic delay. This was first known through probabilistic arguments \cite{Bra-83, Bra-78}, and is commonly referred to as the logarithmic Bramson correction. As a result, the solution converges in the appropriate moving frame to the traveling wave with minimal speed. This convergence can also be proved by other methods, e.g. based on an extended maximum principle related to the nonincrease of the number of intersection points between any two solutions of the Cauchy problem~\cite{Kol-Pet-Pis-37,Lau-85}. Additionally, let us mention \cite{Bru-Der-97}, \cite{Ebe-Saa-00}, \cite{Van-03} for an analysis of such correction terms through convincing formal asymptotic expansions.

More recently, logarithmic corrections have been revisited through PDE techniques by  Hamel, Nolen, Roquejoffre, and Ryzhik. In \cite{Ham-Nol-Roq-Ryz-13}, they consider the case where $u_0\equiv 0$ on a neighbourhood of $+\infty$ (roughly corresponding to $\lambda=+\infty$). However, a careful analysis of their proof reveals that it also applies to the case where
$$
\Big( x \mapsto xe^{x}u_0(x) \Big) \in L^1(0,+\infty),
$$
which, in particular, includes the sufficiently steep case. Further refinements, including the convergence rate, have then been developed by some of the same authors~\cite{Nol-Roq-Ryz-17, Nol-Roq-Ryz-19}. In the same PDE framework, the connection between logarithmic corrections and the nature of waves (pushed vs. pulled) was explored in \cite{Gil-21}, \cite{An-23, An-Hen-Ryz-23}. Additionally, the effect of nonlocal terms or discrete diffusion has recently drawn significant attention~\cite{Bou-Hen-Ryz-20}, \cite{Gra-22}, \cite{Bou-24}, \cite{BFRZ}.

Our goal here is to investigate the intermediate case \eqref{roughly}. The fact that the solution spreads with speed $2$, in the sense that \eqref{spreading-bis} holds with $c=c^*=2$, follows directly from the previously mentioned results and the parabolic comparison principle, see also the insightful introduction in~\cite{Ham-Nad-12}. However, we will further prove, through PDE techniques akin to those in the aforementioned works, that such special initial conditions also lead to convergence to a traveling wave with a logarithmic correction, but with a prefactor depending (in value and sign) on $k> -2$. The critical case $k=-2$ will be tackled separately, as the position of the level sets in this case involves an extra term of order $\ln \ln t$.

\medskip

Through this work, unless otherwise specified, we always denote by $u=u(t,x)$ the solution of \eqref{eq} with initial condition $u_0$ satisfying the following assumption.

\begin{assumption}[Initial condition]\label{ass:initial} The function $u_0 :\R\to \left[0,1\right]$ is uniformly continuous, positive, satisfies  $\liminf _{x\to -\infty} u_0(x)>0$ and there are $k \geq -2$, $0<a\leq A$ such that
\begin{equation}\label{tail-u0}
a x^ke^{-x}\leq u_0(x)\leq  A x^k e^{-x}, \quad \forall x>1.
\end{equation}
\end{assumption}
Notice that, by applying this assumption and the comparison principle, we immediately obtain
$0<u(t,x)<1$ for all $(t,x)\in(0,+\infty)\times\R$.

To state our results, we denote $U$ the traveling wave with minimal speed (or simply minimal traveling wave) of \eqref{eq}, which satisfies the following conditions:
\begin{equation}\label{def:tw}
\left\{\begin{aligned}
&U''+2U'+U(1-U)=0 \, \text{ on } \R,\\
&U(-\infty)=1,\ U(+\infty)=0,\ U'<0. 
\end{aligned}
\right.
\end{equation}
In particular $U (x-2t)$ solves \eqref{eq}. The existence and uniqueness (up to translation) of such a traveling wave were established in~\cite{Fis-37}, \cite{Kol-Pet-Pis-37}, \cite{Aro-Wei-78}. It is also known that 
$$
U (x) \sim B x e^{-x}, \ \text{ as } x\to +\infty,
$$
for some $B >0$. In this context, when $k=1$, respectively $k<1$, respectively $k>1$, Assumption~\ref{ass:initial} means that the right tail of the initial data roughly behaves like, is lighter than, or is heavier than that of the minimal traveling wave, respectively.

\medskip

Our main result for the noncritical case $k>-2$ is as follows. 

\begin{theorem}[Bramson correction when $k> -2$]\label{th:bra}   
Let $u=u(t,x)$ be the solution of \eqref{eq} with initial data~$u_0$ satisfying Assumption~\ref{ass:initial} with $k > -2$. Define $$r:=\frac{1-k}{2}<\frac 3 2 .$$
Then,
there exists a constant $C\geq 0$ such that
\begin{equation}\label{asy}
\lim_{t\to+\infty}\; \inf_{|h|\le C} \; \Big\Vert u(t,\cdot)-U(\cdot-2t+r\ln t+h)\Big\Vert_{L^{\infty}(0,+\infty)}=0.
\end{equation}
\end{theorem}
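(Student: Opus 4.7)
The plan is to adapt the sub-/super-solution machinery developed for the classical Bramson correction (see \cite{Ham-Nol-Roq-Ryz-13}), so that the logarithmic prefactor $r = (1-k)/2$ emerges naturally from the polynomial factor $x^k$ in Assumption~\ref{ass:initial}. Throughout I work in the moving frame $\zeta := x - 2t + r \ln t$ and look for a limit profile of the form $U(\zeta - h)$ with $h$ bounded.

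A preliminary computation: plugging $\overline u(t, x) := U(\zeta - h(t))$ into \eqref{eq} and using $U'' + 2U' + U(1-U) = 0$ leaves the residual $U'(\zeta - h(t))\,(r/t - h'(t))$. Since $U' < 0$ and $h$ is meant to stay bounded, this residual has a fixed sign of order $1/t$ and $U(\zeta - h)$ cannot itself be a super- nor sub-solution. The idea is to split $\R$ at a curve $\zeta = A$. In the bulk $\{\zeta \le A\}$, where $u$ is close to $U$, one absorbs the $O(1/t)$ residual by adding a small bounded correction $\varepsilon(t) \psi(\zeta)$ to $U$; in the leading edge $\{\zeta > A\}$, where $u$ is small, one linearizes. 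With the change of unknown $q(t,\zeta) := e^\zeta u(t,\zeta + 2t - r \ln t)$ the equation becomes
$$
q_t = q_{\zeta\zeta} - \frac{r}{t}\, q_\zeta + \frac{r}{t}\, q - e^{-\zeta} q^2,
$$
while Assumption~\ref{ass:initial} gives $q(0,\zeta) \asymp \zeta^k$ as $\zeta \to +\infty$ and the $e^{-\zeta} q^2$ term is negligible in the tail. Dropping the nonlinear term and rescaling by a factor $t^{-r}$ reduces the resulting linear operator to the heat equation on the half-line $\{\zeta > A\}$ with Dirichlet datum at $\zeta = A$. The method of images together with a Laplace-type evaluation against the polynomial initial data $\zeta^k$ then yields $q(t,\zeta) \sim c\,(\zeta - A)\,t^{(k-1)/2}$ on bounded sets as $t \to +\infty$. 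The key observation is that $r = (1-k)/2$ is exactly the value for which $t^{(k-1)/2} = t^{-r}$ and this asymptotic matches, up to the constant $c$, the far-field expansion $U(\zeta - h)e^\zeta \sim B(\zeta - h)$ of the minimal wave; in other words, it is the unique prefactor for which the half-line linear evolution of $\zeta^k$-data stabilizes to an affine profile of the right order.

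Turning these asymptotics into true sub- and super-solutions then amounts to choosing constants $h^\pm$, a decaying correction $\varepsilon(t) \to 0$, and the Dirichlet data at $\zeta = A$ so that the glued functions
$$
\overline u := \min\bigl\{U(\zeta - h^+) + \varepsilon(t),\; e^{-\zeta} q^+(t,\zeta)\bigr\}, \qquad \underline u := \max\bigl\{0,\; U(\zeta - h^-) - \varepsilon(t),\; e^{-\zeta} q^-(t,\zeta)\bigr\}
$$
are continuous and satisfy the appropriate one-sided differential inequalities across the moving interface. Parabolic comparison then sandwiches $u$, and \eqref{asy} follows by letting $t \to +\infty$ with $C := \max(|h^+|,|h^-|)$. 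I expect the main difficulty to arise in the advance regime $k > 1$ (so $r < 0$): there the initial data is flatter than the wave tail, the minimal wave itself is not below $u$, and one must design $h^-$ together with $q^-$ so that the sub-solution stays strictly ahead of every translate of $U(x-2t)$ while remaining compatible at the interface $\zeta = A$. Producing a single argument valid uniformly in $k > -2$, and in particular across both signs of $r$, is the main technical hurdle.
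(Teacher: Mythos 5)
Your overall plan — working in the moving frame $\zeta = x - 2t + r\ln t$, splitting into a bulk zone and a leading‑edge zone, solving a linearized half‑line problem in the tail, and matching the resulting $t^{(k-1)/2}\zeta$ behavior against the far‑field $B\zeta e^{-\zeta}$ of the minimal wave — is the same skeleton as the paper, and you have correctly identified why $r=(1-k)/2$ is the unique exponent that closes the matching. However there are two genuine gaps in the argument, one of which is fatal as written.

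The fatal one is the final step. You claim that once $u$ is sandwiched between a sub‑solution built from $U(\zeta-h^-)-\varepsilon(t)$ and a super‑solution built from $U(\zeta-h^+)+\varepsilon(t)$, ``\eqref{asy} follows by letting $t\to+\infty$ with $C=\max(|h^+|,|h^-|)$.'' It does not. Trapping $u$ between two \emph{different} translates of $U$, even with $\varepsilon(t)\to 0$, only controls the position of the level sets within a band of width $|h^+-h^-|>0$; it does not say that $u(t,\cdot)$ is uniformly close to \emph{some single} translate $U(\cdot-2t+r\ln t+h)$. The passage from level‑set control to profile convergence is precisely where the paper invokes parabolic compactness together with a Liouville‑type rigidity theorem (the $u_\infty$ extracted along a sequence $t_n\to\infty$ is a time‑global solution squeezed between translates of $U$, then classified as a single wave by \cite[Theorem~3.5]{Ber-Ham-07}). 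Your proposal contains no such argument and the sandwich alone cannot replace it.

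The second gap is that you have not explained how to promote the formal tail computation into an actual sub‑/super‑solution once the non‑autonomous drift $-\frac{r}{t}q_\zeta$ and the quadratic term are reinstated. Choosing the Dirichlet line at $\zeta=2t-r\ln t$ and the amplitude scaling $t^{-r}$ makes the residual $\frac{r}{t}(q-q_\zeta)$ change sign (indeed $q_\zeta/q\to 0$ but the profile $w$ eventually has $w'<0$ when $r>\frac12$), so neither side of the differential inequality closes uniformly. The paper resolves this by introducing a shifted Dirichlet line $\zeta = (r'-r)\ln t$ with $r'\neq r$ (namely $r'=\max(r,0)$ for the super‑solution and $r'=r-2$ for the sub‑solution), and correspondingly rescaling the amplitude by $t^{r'-r}$; this trades the error for a term of one definite sign that can be absorbed by a prefactor $1\pm M/\sqrt{t}$. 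Your sketch is silent on this, and the ansatz you write down (Dirichlet at $\zeta=A$ fixed, amplitude $t^{-r}$) will not produce a genuine sub‑ or super‑solution without such a device. Likewise, in the advance case $r<0$ you correctly flag that a translated minimal wave is not a sub‑solution in the bulk, but you offer no construction; the paper solves this with the auxiliary ODE profile $\varphi_\gamma$ of Lemma~\ref{lem_yet} multiplied by a corrector $e^{\eta z/\sqrt{t}}$, a step that has to be supplied rather than deferred.
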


Most of the proof will be devoted to analyzing the position of the level sets, from which the convergence part largely follows by a Liouville-type result as in~\cite{Ham-Nol-Roq-Ryz-13}. More precisely, we will prove that, for any $0<m<1$,  there are $C_m,T_m>0$ such that
$$
E_m(t)\subset\Big(2t-r\ln t-C_m, 2t-r\ln t+C_m\Big), \text{ for all } t\geq T_m,
$$
where
\begin{equation}\label{defi-level-set}
E_m(t):=\{x\in\R:\, u(t,x)=m\}
\end{equation}
denotes the $m$-level set of $u(t,\cdot)$
which, at least for large enough times, is not empty. As established in the aforementioned literature, the spreading result \eqref{spreading-bis} holds with $c=c^*=2$.

Let us briefly discuss the conclusions of  Theorem~\ref{th:bra}. 
\begin{itemize}
\item For the case $k>1$, the Bramson correction implies that the precise location of the front of the solution moves faster than $x=2t$. Therefore, we call it the {\it advance case}.
\item Conversely, if $-2<k<1$, the Bramson correction implies that the precise location of the front of the solution moves slower than $x=2t$. Hence, we refer to this as the {\it delay case}. 

\item When $k=1$, there is no logarithmic deviation, which is consistent with the observation that, in this case,  the right tail of the initial data is \lq\lq comparable'' to that of the traveling wave.
\end{itemize}

\medskip

As for the critical case $k=-2$, we find that $r = \frac{1-k}{2} = \frac{3}{2}$. In this scenario, however, the drift is not expected to follow a purely logarithmic form. Instead, we provide the following result regarding the large-time asymptotics of the solution, which includes an extra $\mathcal O(\ln \ln t)$ term.

\begin{theorem}[The critical case $k=-2$]\label{th:critical}
Let $u=u(t,x)$ be the solution to \eqref{eq} starting from $u_0$ satisfying Assumption \ref{ass:initial} with $k=-2$.

Then, there exists a constant $C \geq 0$ such that
\begin{equation}\label{asy2}
\lim_{t\to+\infty}\; \inf_{|h|\le C} \; \Big\Vert u(t,\cdot)-U \Big( \cdot-2t +\frac{3}{2}\ln t- \ln \ln t+h \Big) \Big\Vert_{L^{\infty}(0,+\infty)}=0.
\end{equation}
\end{theorem}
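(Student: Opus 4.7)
I would follow the broad strategy of Theorem~\ref{th:bra}: move to the translated frame, perform a formal asymptotic analysis on the linearised heat equation ahead of the front (with a moving Dirichlet condition), and then convert this analysis into rigorous sub- and super-solutions of~\eqref{eq}. Set $y := x - 2t$ and $\psi(t, y) := e^{y} u(t, y + 2t)$; a direct calculation shows that $\psi$ satisfies $\partial_t \psi = \partial_{yy} \psi - e^{-y} \psi^2$, which degenerates to the heat equation wherever $u$ is small. Ahead of a putative front location $\xi(t) \to -\infty$, I would solve approximately the linear heat equation on $\{y > \xi(t)\}$ with $\psi(t, \xi(t)) = 0$, the initial data $\psi_0(z) \asymp z^{-2}$ as $z \to +\infty$ being furnished by Assumption~\ref{ass:initial} with $k=-2$. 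For $y - \xi(t),\, z - \xi(t) = o(\sqrt{t})$, the half-line Green's function is asymptotically the dipole kernel $(y-\xi)(z-\xi)/(\sqrt{\pi}\, t^{3/2})$, so
\begin{equation*}
\psi(t, y) \sim \frac{y - \xi(t)}{\sqrt{\pi}\, t^{3/2}} \int_{\max(1,\xi(t))}^{\sqrt{t}} \frac{dz}{z} \sim \frac{(y - \xi(t))\, \ln t}{2\sqrt{\pi}\, t^{3/2}}.
\end{equation*}
The logarithmic divergence of $\int dz/z$, cut off at the diffusive scale $z \sim \sqrt{t}$, is the source of the $\ln\ln t$ correction; it is absent when $k > -2$, where the analogous moment integral is polynomial in $t$. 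Matching with the level-set condition $\psi(t, y_m) = m e^{y_m}$ at $y_m = \xi(t) + O(1)$ then pins down $\xi(t) = -\tfrac{3}{2}\ln t + \ln\ln t + O(1)$, in agreement with~\eqref{asy2}.

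To convert this formal analysis into rigour, the plan is to construct sub- and super-solutions of the form
\begin{equation*}
\overline u(t,x) = U\bigl(x - 2t + \overline\sigma(t)\bigr) + p_+(t,x), \qquad \underline u(t,x) = \max\bigl\{0,\, U\bigl(x - 2t + \underline\sigma(t)\bigr) - p_-(t,x)\bigr\},
\end{equation*}
with $\overline\sigma(t),\underline\sigma(t) = \tfrac{3}{2}\ln t - \ln\ln t \pm C$ for some large $C>0$, and correctors $p_\pm(t, x)$ designed to absorb the residuals. Plugging the ansatz into~\eqref{eq} produces a term of the form $\dot\sigma\, U'$ with $\dot\sigma = \tfrac{3}{2t} - \tfrac{1}{t\ln t}$; unlike in Theorem~\ref{th:bra}, the sub-leading $(t\ln t)^{-1}$ piece cannot be harmlessly bounded by $O(1/t)$ and must be cancelled by a carefully chosen $p_\pm$, whose shape near the front is prescribed by the tail analysis above. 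The ordering of $\underline u$ and $\overline u$ with $u_0$ at some large initial time will then follow from the two-sided bound in Assumption~\ref{ass:initial}.

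Once the bracketing $\underline u \leq u \leq \overline u$ is established on a time interval $(t_0, +\infty)$, I would conclude
\begin{equation*}
E_m(t) \subset \Bigl(2t - \tfrac{3}{2}\ln t + \ln\ln t - C_m,\; 2t - \tfrac{3}{2}\ln t + \ln\ln t + C_m\Bigr), \qquad t \geq T_m,
\end{equation*}
for each $m \in (0,1)$, and obtain the $L^\infty$ convergence~\eqref{asy2} by the same Liouville-type compactness argument as in~\cite{Ham-Nol-Roq-Ryz-13}, already used for Theorem~\ref{th:bra}. The technical heart of the proof, and the step where I expect the main difficulty, is the precise construction of the correctors $p_\pm$: because the $\ln\ln t$ correction lies below the leading $\tfrac{3}{2}\ln t$ Bramson shift, the usual sub/super-solutions -- which typically control that shift only up to an $O(1)$ error -- are not sharp enough, and a refined boundary-layer construction tied to the divergent integral $\int dz/z$ isolated in the first step is needed to faithfully reproduce the $1/(t\ln t)$ sub-leading contribution.
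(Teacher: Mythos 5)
Your formal analysis of the $\ln\ln t$ correction is on target and matches the paper's Step~1 of Section~\ref{s:critical}: the paper also works with the half-line Dirichlet problem $\partial_t v = \partial_{xx} v$, $v(t,0)=0$, starting from $v_0(x)\asymp x^{-2}$, evaluates $v(t,2\sqrt{t})\sim \frac{e^{-1}}{\sqrt{\pi}}\frac{\ln t}{2t}$, and observes that the extra $\ln t$ factor (your divergent $\int dz/z$ cut off at the diffusive scale) is exactly what, after matching with $U(z)\sim Bze^{-z}$, forces the shift $\frac{3}{2}\ln t - \ln\ln t$ rather than $\frac{3}{2}\ln t$. So far so good.

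However, your rigorous strategy --- traveling wave plus corrector $p_\pm$ on the whole domain --- is genuinely different from the paper's, and you explicitly flag that you cannot construct the correctors; this is the gap. Note that for the super-solution the difficulty is intrinsic: with $\sigma(t)=\frac{3}{2}\ln t - \ln\ln t$ one has $\dot\sigma>0$, hence $\dot\sigma U'<0$, so $U(x-2t+\sigma(t))$ alone is a \emph{sub}-solution rather than a super-solution, and the corrector $p_+$ would have to carry the entire burden. The paper avoids this by \emph{not} using a traveling-wave ansatz for the super-solution at all. Instead it works in two zones. In the far zone it takes the exact Dirichlet-heat function $v$, forms $\widehat{u}(t,x)=(1-M t^{-1/4})\frac{t^{3/2}}{\ln t}e^{-z}v(t,z)$ with $z=x-2t+\frac{3}{2}\ln t-\ln\ln t$, and proves it is a super-solution using a sharp one-sided bound on the log-derivative, $\partial_x v/v\geq -Ct^{-1/4}$ (Lemma~\ref{lem:critical_super}), together with the envelope estimate $e^{-x}v(t,x)\lesssim t^{-3/2+\varepsilon}$ (Lemma~\ref{lem:critical_sub}); the front zone is then handled by capping at $1$. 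For the sub-solution, the shifted traveling wave $\alpha U(x-2t+\frac{3}{2}\ln t-\ln\ln t)$ \emph{does} work with no corrector (your formula with $p_-=0$), but your plan misses that it can only be used after a first sub-solution, built again from $v$, has been used to anchor the comparison on the moving lateral boundary $x=2t+2\sqrt{t}$. In short: you have correctly identified where the $\ln\ln t$ comes from, but the step you label as the ``technical heart'' is precisely where the paper does something else entirely, and without supplying either the correctors or the two-zone/Dirichlet-heat machinery (in particular the three postponed lemmas on $v$), the proof is incomplete.
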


We refer to the work \cite{Ber-Bru-17}, based on probabilistic arguments, for refined results in the critical case (but when the problem is instead posed on a moving half-line with a Dirichlet boundary condition). Let us also refer to the comprehensive work \cite{Ebe-Saa-00} which primarily utilizes formal asymptotic expansions. In contrast, our approach is distinct and relies on PDE techniques.

\paragraph{Outline of the paper.} The organization of this work is as follows. In Section \ref{s:formal} we present some formal computations that are crucial to \lq\lq guess'' the shape of the solution, and provide material to construct sharp sub- and super-solutions. Building on these computations, we obtain the upper estimate on the level sets for the case $k> -2$ in Section \ref{s:upper} and the lower estimate in Section~\ref{s:lower}. The position of the level sets in the critical case $k=-2$ is studied in Section \ref{s:critical}, using a similar but slightly different ansatz satisfying a moving Dirichlet boundary condition. Last, the convergence to the profile of the minimal traveling wave is addressed in Section~\ref{s:proof of th}. 

\section{Enlightening formal computations}\label{s:formal}

The computations presented here are the keystone for constructing sharp sub- and super-solutions in the subsequent sections. 

\paragraph{Computations involving the minimal wave.} Denote as $U$ the minimal Fisher-KPP wave, i.e. the (unique up to shifts) solution of~\eqref{def:tw}.
Then, defining
$
v(t,x):=U\left(x-2t+r\ln (t+t_0)\right)
$
with $t_0>0$,
we immediately compute
$$
\mathcal{L}v:= \partial _t v -\partial_{xx}v-v(1-v)=\frac{r}{t+t_0}U'\left(x-2t+r\ln (t+t_0)\right).
$$
When $r>0$ (i.e. $k = 1 - 2r <1$), then $v$ acts as a sub-solution of~\eqref{eq}. However, due to the asymptotics $U (x) \sim x e^{-x}$ as $x \to +\infty$, it follows that for any $t\geq 0$ that $v(t,x) > u_0 (x)$ on some right half-line. Conversely, when $r<0$ (i.e. $k>1$), then $v$ acts as a super-solution, but $v(t,x)$ becomes smaller than $u_0(x)$ for large $x$. Consequently, in both cases, the comparison principle seems inapplicable.

\begin{lemma}\label{lem:tw-sub-super}
Let $U$ be the minimal traveling wave and let $t_0>0$ be given. 

If  $k\geq 1$, then $U(x-2t+r\ln (t+t_0))$ is a super-solution of \eqref{eq} on $[0,+\infty)\times \R$. 

If  $k\leq 1$, then $U(x-2t+r\ln (t+t_0))$ is a sub-solution of \eqref{eq} on $[0,+\infty)\times \R$.
\end{lemma}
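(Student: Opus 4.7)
The lemma is essentially a direct verification, and most of the work has already been done in the preceding paragraph. My plan is to plug $v(t,x):=U\bigl(x-2t+r\ln(t+t_0)\bigr)$ into the Fisher--KPP operator and simplify using the traveling wave ODE \eqref{def:tw}, then read off the sign from that of $U'$.

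Writing $\xi := x-2t+r\ln(t+t_0)$, a direct computation gives
\begin{equation*}
\partial_t v = \left(-2+\frac{r}{t+t_0}\right) U'(\xi),\qquad \partial_{xx}v = U''(\xi),\qquad v(1-v) = U(\xi)\bigl(1-U(\xi)\bigr).
\end{equation*}
Using the traveling wave equation $U''+2U'+U(1-U)=0$, the $-2U'(\xi)$, $-U''(\xi)$ and $-U(\xi)(1-U(\xi))$ terms cancel, and we recover exactly the identity already recorded above:
\begin{equation*}
\mathcal{L}v = \partial_t v - \partial_{xx}v - v(1-v) = \frac{r}{t+t_0}\, U'(\xi).
\end{equation*}

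Since by \eqref{def:tw} we have $U'<0$ everywhere, the sign of $\mathcal{L}v$ is opposite to the sign of $r$. Because $r=(1-k)/2$, the case $k\geq 1$ corresponds to $r\leq 0$, which yields $\mathcal{L}v\geq 0$ on $[0,+\infty)\times\R$, so that $v$ is a super-solution of \eqref{eq}; symmetrically, the case $k\leq 1$ corresponds to $r\geq 0$, yielding $\mathcal{L}v\leq 0$ and hence a sub-solution. (The boundary case $r=0$, i.e.\ $k=1$, is trivially both, since $v$ then solves \eqref{eq} exactly.)

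There is no genuine obstacle here: the only subtlety is purely bookkeeping, namely tracking the sign of $r$ as a function of $k$ via $r=(1-k)/2$ and remembering that $U'<0$ makes the sign of $\frac{r}{t+t_0}U'(\xi)$ flip relative to that of $r$. The statement is sharp in the sense that, as the authors already noted in the paragraph above the lemma, $v$ cannot be directly compared with $u$ via the comparison principle on the whole real line, since the initial and asymptotic ordering between $v(t,\cdot)$ and $u(t,\cdot)$ near $+\infty$ fails; this is precisely why the later sections introduce more refined sub/super-solutions based on moving Dirichlet problems rather than on the bare traveling wave shift used here.
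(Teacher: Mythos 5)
Your proof is correct and follows exactly the same route as the paper, which never gives a separate proof environment for Lemma \ref{lem:tw-sub-super}: the identity $\mathcal{L}v=\frac{r}{t+t_0}U'(\xi)$ is derived in the paragraph immediately preceding the lemma, and the lemma is just the sign reading you carried out. Your bookkeeping ($r=(1-k)/2$, $U'<0$, hence $k\ge 1\Leftrightarrow r\le 0\Rightarrow\mathcal{L}v\ge 0$, and symmetrically for $k\le 1$) matches the intended argument precisely.
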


This indicates that the traveling wave may serve in the so-called front zone $x\leq 2t+\mathcal \mathcal{O}(\sqrt t)$. However, a different tool is required in the so-called far away zone $x\geq 2t +\mathcal \mathcal{O}(\sqrt t)$. To do so, we work in the moving frame as in the seminal work \cite{Ham-Nol-Roq-Ryz-13}, see also \cite{Gil-21}, leading us to an ODE problem. In contrast with previous literature, the adequate solution is determined by the choice of $k> -2$ in \eqref{tail-u0}, which serves as a good starting point for constructing a relevant ansatz.

\paragraph{Computations in the moving frame.} Following the idea from \cite{Ham-Nol-Roq-Ryz-13}, we start from the linearized equation of \eqref{eq} at $u\approx 0$,
\begin{equation*}
 \partial _t u= \partial _{xx} u+u.
\end{equation*}
By changing the variable
\begin{equation*}
z=x-(2t-r\ln(t+t_0)),
\end{equation*}
we reach
\begin{equation*}
 \partial _t u= \partial _{zz} u+\left(2-\frac{r}{t+t_0}\right)\partial _z u+u.
\end{equation*}
Letting
\begin{equation*}
v(t,z):=e^z u(t,z),
\end{equation*}
we get
\begin{equation*}
 \partial _t v= \partial _{zz} v+\frac{r}{t+t_0}(v-\partial _z v).
\end{equation*}
Using the self-similar variables
\begin{equation*}
\tau=\ln (t+t_0)-\ln t_0,\quad y=\frac{z}{\sqrt{t+t_0}},
\end{equation*}
we reach
\begin{eqnarray*}
\partial_\tau v &=&\partial_{yy} v+\frac y 2 \partial_y v+rv-\frac r{\sqrt{t_0}}e^{-\frac \tau 2} \partial_y v\\
&\approx & \partial_{yy} v+\frac y 2 \partial_y v+rv,
\end{eqnarray*}
by formally ignoring the non-autonomous exponentially decreasing drift term. Plugging the ansatz
$$
v(\tau, y ) = e^{\frac{\tau}{2}} w (y),
$$
with $w (0) = 0$ and $w' (0) = 1$ (which can be motivated by a matching argument with the traveling wave), we are left to solve the Cauchy problem
\begin{equation*}
\left\{
\begin{array}{l}
 w''  + \frac{y}{2} w' + \left( r - \frac{1}{2} \right) w = 0, \vspace{3pt}\\
w(0) = 0, \quad w' (0) = 1.
\end{array}
\right.
\end{equation*}

\begin{lemma}\label{lem:ode-w}
The solution $w$ of the Cauchy problem
\begin{equation}\label{main_ode}
\left\{
\begin{array}{l}
w'' + \frac{y}{2} w' + \left( r - \frac{1}{2} \right) w = 0, \quad y>0, \vspace{3pt}\\
w(0) = 0, \vspace{3pt}\\
w' (0) = 1,
\end{array}
\right.
\end{equation}
satisfies the following properties:
\begin{enumerate}
\item [(i)] if $r = \frac{3}{2}$, then $w (y) = y e^{-y^2/4}$;
\item [(ii)]  if $r < \frac{3}{2}$, then $w >0$ in $(0,+\infty)$, $\lim_{y \to +\infty} \frac{w'}{w} (y) =0 $ and there exists $C >0$ such that 
\begin{equation}\label{w-infinity}
w(y) \sim C y^{k} \mbox{ as } y \to +\infty, \quad k = 1 -2 r.
\end{equation}
\end{enumerate}
\end{lemma}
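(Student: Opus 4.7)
Part (i) is a direct verification: substituting $w(y) = y e^{-y^2/4}$ into \eqref{main_ode} with $r = 3/2$, one computes $w'(y) = (1 - y^2/2) e^{-y^2/4}$ and $w''(y) = (y^3/4 - 3y/2) e^{-y^2/4}$ and checks that the sum $w'' + (y/2) w' + w$ vanishes identically, while the initial conditions $w(0) = 0$ and $w'(0) = 1$ are immediate.

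For part (ii), my plan is to reduce \eqref{main_ode} to Kummer's confluent hypergeometric equation via an explicit change of variables. The Sturm--Liouville form $(e^{y^2/4} w')' = (\tfrac{1}{2} - r) e^{y^2/4} w$, obtained with the integrating factor $e^{y^2/4}$, suggests the substitution $s = y^2/4$ together with $v(s) := e^{s}\, w(2\sqrt s)$, which transforms the Cauchy problem into
\begin{equation*}
s v''(s) + \Bigl(\tfrac{1}{2} - s\Bigr) v'(s) + (r - 1) v(s) = 0, \qquad v(0) = 0.
\end{equation*}
The condition $v(0) = 0$, together with a Frobenius analysis at $s = 0$, forces $v(s) = c \sqrt{s}\, M\!\bigl(\tfrac{3}{2} - r,\, \tfrac{3}{2},\, s\bigr)$, where $M$ denotes Kummer's confluent hypergeometric function and the constant $c$ is determined by $w'(0) = 1$ (a short computation shows $c > 0$).

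Two standard features of $M$ then yield the whole statement. First, because $3/2 - r > 0$, every coefficient in the series $M(3/2 - r, 3/2, s) = \sum_{n \geq 0} \frac{(3/2 - r)_n}{(3/2)_n\, n!} s^n$ is strictly positive, so $M > 0$ on $[0, +\infty)$, and consequently $w(y) = e^{-y^2/4}\, v(y^2/4) > 0$ on $(0, +\infty)$. Second, the classical asymptotic $M(a,b,z) \sim \frac{\Gamma(b)}{\Gamma(a)}\, e^z\, z^{a-b}$ as $z \to +\infty$, applied with $a = 3/2 - r > 0$ and $b = 3/2$ and combined with $s = y^2/4$, delivers $w(y) \sim C\, y^{1-2r} = C\, y^k$ for some $C > 0$. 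Differentiating the same asymptotic (a standard property of $M$) gives $w'(y) \sim C k\, y^{k-1}$ and hence $w'(y)/w(y) \sim k/y \to 0$.

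The chief technical point is the asymptotic behavior of $M$ at infinity, which I would simply invoke from classical special-function theory. A fully self-contained alternative is to work directly with the Sturm--Liouville form: two integrations turn \eqref{main_ode} into a Volterra-type integral equation whose iterates can be controlled on $[Y, +\infty)$ by a contraction argument, yielding the same $C\, y^k$ asymptotic. The underlying mechanism is that \eqref{main_ode} admits two linearly independent solutions at infinity, one algebraic of order $y^k$ and one subdominant of order $y^{2r - 1} e^{-y^2/4}$; the initial condition $w'(0) = 1$ generically selects the algebraic one, with a strictly positive prefactor guaranteed by the positivity of the Kummer series.
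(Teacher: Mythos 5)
Your proposal is correct and follows essentially the same route as the paper: both reduce \eqref{main_ode} to Kummer's confluent hypergeometric equation via the substitution $s=y^2/4$ with an exponential weight, identify $w(y)=y\,e^{-y^2/4}M\bigl(\tfrac{3}{2}-r,\tfrac{3}{2},\tfrac{y^2}{4}\bigr)$, and invoke the classical large-argument asymptotic and the positivity of the series to conclude. The only cosmetic difference is that the paper first factors $w=y\varphi$ to land on a regular (non-singular) Cauchy problem for $\psi(z)=\varphi(2\sqrt z)e^{z}$ with $b=\tfrac32$, while you go directly to the Kummer equation with $b=\tfrac12$ and select the $\sqrt s$ Frobenius branch; and for $w'/w\to 0$ the paper cites Abramowitz--Stegun 13.4.14 for $\sqrt z\bigl(M'/M-1\bigr)\to 0$ whereas you differentiate the asymptotic expansion --- both are standard facts about $M$ and equivalent in this context.
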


\begin{proof} The case $r=\frac 32$ is obvious as one may directly check that $y e^{-y^2/4}$ indeed satisfies the wanted problem. When $r<\frac 32$, we may write $w(y)=y\varphi(y)$ where 
$$
\left\{
\begin{array}{l}
\varphi ''+\left(\frac{y}{2}+\frac{2}{y}\right)\varphi'+r\varphi=0, \quad y>0, \vspace{3pt}\\
\varphi(0) = 1, \vspace{3pt}\\
\varphi' (0) = 0.
\end{array}
\right.
$$
Notice that, using a Sturm-Liouville approach, one can recast the above ODE problem into an integral equation and prove the existence and uniqueness of a local solution which satisfies $\varphi''(0)=-\frac r 3$, see \cite[Proposition 3.1]{Har-Wei-82}. Next, by writing $\varphi(y)=e^{-\frac{y^2}{4}}\psi(\frac{y^2}{4})$, we see that $\psi$ satisfies
$$
\left\{
\begin{array}{l}
 z\psi''+\left(\frac 3 2-z\right)\psi'+\left(r-\frac{3}{2}\right)\psi=0, \quad z>0, \vspace{3pt}\\
\psi(0) = 1, \vspace{3pt}\\
\psi' (0) = 1-\frac{2r}{3}.
\end{array}
\right.
$$
Then $\psi(z)={}_1F_1\left(\frac{3-2 r}{2},\frac 3 2,z\right)$, where ${}_1 F_1(a,b,z)$ denotes
the confluent hypergeometric function of the first kind, or Kummer's function~\cite{Abr-Ste-64}. According to \cite[formula 13.1.4]{Abr-Ste-64}, for $a$  not a nonpositive integer, we have
$$
{}_1F_1(a,b,z)\sim \frac{\Gamma(b)}{\Gamma(a)}
\frac{e^z}{z^{b-a}}, \quad\text{ as } z\to +\infty,
$$
which, applied to  $w(y)=ye^{-\frac{y^2}{4}}\psi\left(\frac{y^2}{4}\right)$, exactly corresponds to \eqref{w-infinity}. Additionally, from \cite[formula 13.4.14]{Abr-Ste-64}, we have
$$
\sqrt z\left(\frac{ \frac{d}{dz} {}_1F_1(a,b,z)}{  {}_1F_1(a,b,z)} -1\right)\to 0 , \quad\text{ as } z\to +\infty,
$$
which implies $\frac{w'(y)}{w(y)}\to 0$ as $y\to+\infty$. Lastly, since $a=\frac{3-2r}{2}>0$, $b=\frac 32$, it follows from \cite[formula 13.1.2]{Abr-Ste-64} that $\psi$ is positive on $(0,+\infty)$ and so is $w$.
\end{proof}

Observe that, returning to $u=u(t,z)$, we have
$$
u(t,z)=e^{-z}\sqrt{\frac{t+t_0}{t_0}} w\left(\frac{z}{\sqrt{t+t_0}}\right),
$$
so that not only the asymptotic behavior \eqref{w-infinity} ensures that
$$
u(0,z)\sim \frac{C}{t_0^{k/2}}z^ke^{-z}, \quad \text{ as } z\to +\infty,
$$ 
which matches with the spatial decay of the initial data as stated in Assumption \ref{ass:initial}, but also ensures
$$
u(t,z)\sim \frac{1}{\sqrt{t_0}}ze^{-z}, \quad \text{ as } t\to +\infty,
$$
which matches with the asymptotics of the minimal traveling wave.  This suggests that this ansatz may serve effectively as either a sub-solution or a super-solution.

\begin{remark}\label{rem:r'}
Incorporating the nonlinear and non-autonomous terms that were previously ignored in the formal computation, the proposed ansatz, unfortunately, fails to generate either a valid sub-solution or super-solution. To overcome this issue, we will consider a slightly different approach, introducing a logarithmic drift different from the expected one. More precisely, taking $r' >0$ and letting instead
$z = x - (2t - r' \ln (t+t_0))$ and
$$v (\tau, y) = e^{ (r' - r ) \tau}  e^{\frac{\tau}{2}} w(y)$$
in the above computation, we observe that $w$ still satisfies the ODE~\eqref{main_ode}. Thanks to this additional parameter~$r'$, the resulting alternative ansatz now satisfies a Dirichlet boundary condition at $x = 2t + r' \ln (t+t_0)$, and still exhibits similar asymptotic behavior. However, the inclusion of $r'$ proves to be more convenient to construct sub- and super-solutions of~\eqref{eq}.
\end{remark}

\section{The upper estimate}\label{s:upper}

This section focuses on establishing the upper estimate for the position of the $m$-level set $E_m$ of the solution, as defined in~\eqref{defi-level-set}.

\begin{proposition}[Upper estimate]\label{prop:super-nancy}
Let $u=u(t,x)$ be the solution to \eqref{eq} starting from $u_0$ satisfying Assumption \ref{ass:initial} with $k>-2$. Let $m\in(0,1)$ be given. Then, there are $C_m>0$ and $T_m>0$ such that
\begin{equation}
\label{conclusion-super-nancy}
E_m(t)\subset (-\infty,2t-r\ln t +C_m), \quad \forall t\geq T_m,
\end{equation}
with
$$r =\frac{1-k}{2} < \frac{3}{2}.$$
Moreover, there are $k_1>0$ and $\sigma_1>0$ such that
\begin{equation}\label{upper estimate front}
u(t,2t-r\ln t+y)\le k_1 (y+1) e^{-y}\quad \text{for any  $t\geq 1$, $0 \le y\le \sigma_1\sqrt{t}$}.
\end{equation}
\end{proposition}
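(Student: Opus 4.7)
The plan is to build a global super-solution $\bar u$ of \eqref{eq} that dominates $u_0$ at $t=0$ and whose profile, in a moving frame, mirrors the formal ansatz derived in Section \ref{s:formal}. The parabolic comparison principle will then yield $u\le\bar u$ on $[0,+\infty)\times\R$, from which \eqref{upper estimate front} is obtained by evaluating $\bar u$ along the curve $x=2t-r\ln t+y$. The inclusion \eqref{conclusion-super-nancy} follows at once from \eqref{upper estimate front}: since $(y+1)e^{-y}$ is strictly decreasing to $0$ on $[0,+\infty)$, the identity $u(t,x^{*})=m$ with $x^{*}\ge 2t-r\ln t$ forces $x^{*}-(2t-r\ln t)\le C_{m}$ for some $C_{m}$ depending only on $m$, $k_{1}$, $\sigma_{1}$.

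For the construction I would follow Remark \ref{rem:r'} and exploit the additional parameter $r'$. In the shifted variables $z=x-2t+r'\ln(t+t_{0})$ and $y=z/\sqrt{t+t_{0}}$, introduce a candidate of the form
$$
\tilde u(t,x):=K\left(\frac{t+t_{0}}{t_{0}}\right)^{r'-r+\frac12}e^{-z}\,w\!\left(\frac{z}{\sqrt{t+t_{0}}}\right)\quad\text{on }\{z>0\},
$$
with $w$ the solution of \eqref{main_ode} from Lemma \ref{lem:ode-w}, $r'$ close to $r$, and $t_{0}, K>0$ large. The global candidate is $\bar u:=\min\{1,\tilde u\}$ on $\{z>0\}$ and $\bar u\equiv 1$ on $\{z\le 0\}$. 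Because $w(0)=0$, $\tilde u$ vanishes as $z\to 0^{+}$, so the gluing across $\{z=0\}$ is continuous and the resulting $\bar u$ is a viscosity super-solution of \eqref{eq} provided $\tilde u$ is one on $\{0<\tilde u<1\}$.

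Verifying that $\tilde u$ is a super-solution reduces, via the change of variables of Section \ref{s:formal} and the ODE satisfied by $w$, to checking that a residual of sign $r'\,w'(y)$ plus the non-negative nonlinear contribution $+\tilde u^{2}$ has the correct sign. Using Lemma \ref{lem:ode-w} to locate the sign of $w'(y)$ (positive for $k>0$, eventually negative for $k<0$), one tunes $r'$ accordingly, and the nonlinearity is used as a cushion where $w'$ has the wrong sign. Initial-data domination is then a consequence of \eqref{w-infinity}: $\bar u(0,x)\sim KC\,t_{0}^{-(k+1)/2}\,x^{k}e^{-x}$ as $x\to+\infty$, which exceeds $Ax^{k}e^{-x}$ provided $K$ is chosen large enough, while $\bar u(0,\cdot)\equiv 1$ on the complementary region trivially dominates $u_{0}\le 1$. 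Once $u\le\bar u$ is secured, substituting $x=2t-r\ln t+y$ gives $z=y+(r'-r)\ln(t+t_{0})+o(1)$ and keeps $y/\sqrt{t+t_{0}}$ in a bounded interval for $y\in[0,\sigma_{1}\sqrt{t}]$ with $\sigma_{1}$ small; combining the bound $w(\eta)\le C\eta$ near $0$ (from $w(0)=0$, $w'(0)=1$) with the global control from Lemma \ref{lem:ode-w} yields \eqref{upper estimate front} after collecting the remaining exponential and polynomial factors.

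The main obstacle is the super-solution verification: the formal computation in Section~\ref{s:formal} neglects both a non-autonomous drift of order $e^{-\tau/2}$ and the $O(u^{2})$ nonlinearity, and one must ensure that the $(r'-r)$ correction, together with the sign of $w'$ given by Lemma \ref{lem:ode-w} (which depends delicately on whether $k$ is positive, zero, or negative), compensates these neglected contributions uniformly on the entire region $\{z>0\}$. A closely related subtlety is the fine tuning of $r'-r$: it must be small enough that the $\ln(t+t_{0})$ shift in $z$ does not spoil the factor $(y+1)e^{-y}$ in \eqref{upper estimate front}, yet of the correct sign to validate the super-solution inequality. Once this balance is achieved, the rest of the argument is essentially algebraic book-keeping.
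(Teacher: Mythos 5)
Your overall architecture — build a super-solution from the ansatz $e^{-z}\,t^{1/2+r'-r}\,w(z/\sqrt{t})$ following Section~\ref{s:formal}, cut it at $1$, compare, and read off both estimates — is the right frame and matches the paper up to Step~2. However, there are two genuine gaps in the verification that the candidate is actually a super-solution and in how the conclusion is extracted from it.

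First, your claim that ``the nonlinearity is used as a cushion where $w'$ has the wrong sign'' cannot work. With $r'=r>0$ (delay case), the linear residual is $\frac{r\,e^{-z}}{t}\,w'\bigl(\tfrac{z}{\sqrt t}\bigr)$ while the nonlinear contribution is $\tilde u^2 = e^{-2z}\,t\,w^2\bigl(\tfrac{z}{\sqrt t}\bigr)$. For $-2<k<0$ (i.e.\ $\tfrac12<r<\tfrac32$) the function $w$ tends to $0$ and $w'$ is eventually negative; on the region where $w'<0$ you would need $e^{-2z}t\,w^2\ge \frac{r}{t}e^{-z}|w'|$, i.e.\ $e^{-z}\,t^2\,w^2/|w'|\ge r$. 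Since $|w'|/w$ stays bounded and $w(y)\sim Cy^k$, one has $e^{-z}t^2\,w^2/|w'|\sim c\,e^{-z}t^{2-k/2}z^k\to 0$ as $z\to\infty$ at fixed $t$, so the inequality fails arbitrarily far out in $z$ and no choice of multiplicative constant saves it (both terms scale, but with incompatible $z$-decays $e^{-z}$ vs.\ $e^{-2z}$). The paper instead introduces the prefactor $1-\tfrac{M}{\sqrt t}$; its time derivative produces an extra \emph{positive} term $\tfrac{M}{2t^{3/2}}\psi \sim \tfrac{M}{2t}e^{-z}w$, which decays on the same $e^{-z}w$-scale as the bad term $\tfrac{r}{t}e^{-z}w'$ and dominates it once $M$ is chosen using the boundedness of $|w'|/w$ away from $0$. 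That perturbation, not the nonlinearity, is the indispensable ingredient in the delay regime.

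Second, in the advance case $k>1$ (so $r<0$ and $r'=0$), the $\psi$-based super-solution has its Dirichlet boundary at $x=2t$ (not $2t-r\ln t$), and when you evaluate it at $x=2t-r\ln t+y$ the shift $z = y - r\ln(t+t_0)$ is of order $\ln t$; after unravelling the algebra, the bound one extracts is $u(t,2t-r\ln t+y)\le C\,(y+\ln t)\,e^{-y}$ rather than $k_1(y+1)e^{-y}$, and the level-set inclusion it yields is only $E_m(t)\subset(-\infty, 2t-r\ln t + C_m\ln\ln t)$. The paper explicitly flags this and bootstraps: starting from the bound on the lateral line $x=2(t+t_0)-r\ln(t+t_0)+\sqrt t$, it compares (on the domain left of that line) with a second super-solution $\overline u_2$ built from $\varphi(z)=z\,e^{-z}$ (a solution of the linearized stationary equation), which is what actually produces \eqref{upper estimate front} and \eqref{conclusion-super-nancy} with the correct $\mathcal O(1)$ error in the advance case. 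Your proposal omits this second comparison entirely.

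A smaller gap: you assert that \eqref{conclusion-super-nancy} ``follows at once'' from \eqref{upper estimate front}; but \eqref{upper estimate front} controls $u$ only for $0\le y\le\sigma_1\sqrt t$, and you still need the separate far-field decay (the analogue of \eqref{far way zone}) to rule out level sets far to the right of $2t+\mathcal O(\sqrt t)$. Finally, the ``signs of $w'$'' you invoke (positive for $k>0$, eventually negative for $k<0$) are plausible but not stated in Lemma~\ref{lem:ode-w}; the paper avoids needing global sign information by only using $w'>0$ near $y=0$ together with boundedness of $|w'|/w$ beyond some $\delta>0$.
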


\subsection{Proof of Proposition \ref{prop:super-nancy}}\label{ss:proof-upper}

To construct an appropriate ansatz for the solution, we begin by considering the form derived from the formal computations in the moving frame (see Section \ref{s:formal}), which suggests the following behavior:
$$
e^{-z} \sqrt{t} w\left( \frac{z}{\sqrt{t}} \right),\quad  z=x-2t + r \ln t,
$$
with~$w$ coming from Lemma~\ref{lem:ode-w}. However, some modifications will be necessary to deal, in particular, with the non-autonomous drift term that was neglected in Section~\ref{s:formal}, see Remark~\ref{rem:r'}. However, this ansatz ignores the non-autonomous drift term that we neglected earlier in the derivation, and incorporating this term will be crucial to constructing proper sub- and super-solutions. These modifications slightly differ between the delay case and the advance case, as the signs of those ignored terms are different. 

\paragraph{Step 1: a first super-solution.} For $t>0$, $z \geq 0$, we define
 \begin{equation}\label{def:psi}
\psi (t,z) :=  e^{-z} t^{\frac 1 2 + r' - r} \, w \left( \frac{z}{\sqrt{t}}\right),
\end{equation} 
where 
the function $w$ is given by Lemma~\ref{lem:ode-w}, and
\begin{equation}\label{r'r}
r ' = \max (r, 0).
\end{equation}
For $M\geq0$ to be specified later, let us prove that 
\begin{equation}
\label{supersol}
\widehat  u(t,x):=\left(1-\frac{M}{\sqrt t}\right)\psi(t,x-2t+r' \ln t)
\end{equation}
is a super-solution of \eqref{eq} for all $t\geq t_0$ (with $t_0>1$ large enough) and $x \geq 2t - r' \ln t$.

Let us start with
$$
\tilde{u} (t,x) := \psi(t,x-2t+r'\ln t).
$$
Observe that
\begin{eqnarray*}
\partial_t \psi (t,z) & =& \left( \frac{1}{2} + r' - r \right) \, \frac{e^{-z}}{t^{\frac 12 +r - r'}} \, w \left( \frac{z}{\sqrt{t}}\right) - \frac{z e^{-z}}{2 t^{1+r -r'}} \,  w ' \left( \frac{z}{\sqrt{t}}\right),\\
\partial_z \psi (t,z)& = & - \psi (t,z) + e^{-z} t^{r'-r} \, w' \left( \frac{z}{\sqrt{t}}\right), \\
\partial_{zz} \psi (t,z) 
& = &  \psi (t,z) - 2 e^{-z} t^{r' -r } \, w' \left( \frac{z}{\sqrt{t}} \right) + \frac{e^{-z}}{t^{\frac 12 +r - r'}} \, w '' \left( \frac{z}{\sqrt{t}}\right).
\end{eqnarray*}
Combining this and \eqref{main_ode},  we reach
\begin{eqnarray*}
\mathcal L \tilde u (t,x) &\geq & \partial_t \tilde{u} - \partial _{xx} \tilde u -\tilde u \\
& =& \partial _t \psi+\left(-2+\frac{r'}{t}\right)\partial _z \psi -\partial_{zz}\psi -\psi\\
& = &  e^{-z} \left( \frac{ \frac{1}{2} + r' - r}{t^{\frac 12 + r - r'}} - \frac{r'}{t^{\frac 12 +r -r'} } \right) \, w  \left( \frac{z}{\sqrt{t}} \right)  + e^{-z} \left(- \frac{z}{2t^{1+r -r'}}+ \frac{r'}{t^{1+r - r'}} \right) \, w '  \left( \frac{z}{\sqrt{t}} \right) \\
& & \qquad - \frac{e^{-z}}{t^{\frac 12 + r - r' }} \, w ''  \left( \frac{z}{\sqrt{t}} \right) \\
& = &  e^{-z} \frac{ \frac{1}{2} - r}{t^{\frac 12 + r - r'}}   \, w  \left( \frac{z}{\sqrt{t}} \right)  + e^{-z} \left(- \frac{z}{2t^{1+r -r'}}+ \frac{r'}{t^{1+r - r'}} \right) \, w '  \left( \frac{z}{\sqrt{t}} \right) \\
& & \qquad + \frac{e^{-z}}{t^{\frac 12 + r - r' }} \,  \left( \frac{z}{2 \sqrt{t}} \, w' \left( \frac{z}{\sqrt{t}} \right) + \left(r-\frac{1}{2} \right) \, w \left( \frac{z}{\sqrt{t}} \right) \right) \\
& = &  \frac{r' e^{-z}}{t^{1+r - r'}}  \, w '  \left( \frac{z}{\sqrt{t}} \right),
\end{eqnarray*}
where we use the shorthand $z=x-2t+r'\ln t$.  

When $r \leq 0$, then $r' = 0$ and $\tilde{u}$ is already a super-solution, so one may choose $M=0$ in~\eqref{supersol}.  
Unfortunately, when $r >0$ this remaining term may be negative, and at least this is the case when $r > \frac{1}{2}$ due to $w (y) \sim C y^{1-2r}$  as $y \to +\infty$. 

We now deal with the case $r>0$ (so that $r'=r$).
The key point is that $w$ is increasing on a neighborhood of 0, beyond which $\vert w'\vert/w$ remains bounded. Precisely, from Lemma \ref{lem:ode-w}, we can select $\delta >0$ such that $w' >0$ in $[0,\delta)$, and then $M >0$ such that
\begin{equation}\label{w'w}
\frac{2 r' |w'|}{w} \leq M \quad \mbox{ on } \ [\delta,+\infty).
\end{equation}
Now, we set $t_0>1$ large enough such that $1-\frac{M}{\sqrt t_0}\geq \frac 12 $. Then, since $\widehat{u}(t,x)=\left(1-\frac{M}{\sqrt t}\right) \tilde u (t,x)$, one has
$$
\mathcal L \widehat u \geq  \partial_t \widehat {u} - \partial _{xx} \widehat u -\widehat u= \frac{e^{-z}}{t} \, \left[\left(1-\frac{M}{\sqrt t}\right) r'  w '  \left( \frac{z}{\sqrt{t}} \right)  + \frac{M}{2}  \, w \left( \frac{z}{\sqrt{t}}\right) \right]\geq 0,
$$
for any $t\geq t_0$, $z=x-2t+r'\ln t\geq 0$. The last inequality follows from either $w' \left( \frac{z}{\sqrt{t}} \right) >0$ for $0\leq z \leq \delta \sqrt{t}$, or from \eqref{w'w} for $z \geq \delta \sqrt{t}$. We have thus proved that $\widehat {u}=\widehat u(t,x)$ is a super-solution of \eqref{eq} for all $t\geq t_0$ and $x \geq 2t - r' \ln t$.

\paragraph{Step 2: comparison with the solution.} Since $\widehat{u}$ from \eqref{supersol} satisfies $\widehat{u} (t,2t - r' \ln t)=0$ for $t\ge 0$, we cannot directly use it as a super-solution. Therefore we will first cut it by $1$ on a left half-line. By setting $t_0 >1$ large enough such that $\frac{M}{\sqrt{t_0}} \leq \frac{1}{2}$, we have
\begin{eqnarray*}
&&\widehat{u} \left(t+t_0, 2(t+t_0)-r\ln (t+t_0) +  1\right )\\
&& \qquad = \left( 1 - \frac{M}{\sqrt{t+t_0}} \right)   e^{-1- (r' -r) \ln (t+t_0)} \, (t+t_0)^{\frac 12 +r' - r}  w \left( \frac{(r'-r) \ln (t+t_0) +1}{\sqrt{t+t_0}} \right)\\ 
&& \qquad \geq \frac{e^{-1}}{2} \sqrt{t+t_0}\times  w \left( \frac{(r'-r) \ln (t+t_0) +1}{\sqrt{t+t_0}} \right).
\end{eqnarray*}
We deduce from $w'(0)=1$ that
$$
w \left( \frac{(r'-r) \ln (t+t_0) +1}{\sqrt{t+t_0}} \right) \sim  \frac{(r'-r) \ln (t+t_0) +1}{\sqrt{t+t_0}} \quad  \text{ as } t\to +\infty,
$$
hence (using also the fact that $r'-r \geq 0$)
$$\inf_{t \geq 0 } \ \widehat{u} \left(t+t_0,2(t+t_0)-r\ln (t+t_0) +  1\right) > 0.$$
It follows that we can find $K\gg 1$ so that
$$
K \widehat{u} \left(t+t_0 , 2(t+t_0)-r\ln (t+t_0) +  1\right) > 1,
$$
for all $t \geq 0$.
Then
$$\overline{u} (t,x) :=\left\{
\begin{array}{ll}
1 & \ \mbox{ if }\ x < 2(t+t_0) -r \ln (t+t_0) +1 , \vspace{3pt} \\
\min \{1, K \widehat{u} (t+t_0, x ) \} & \ \mbox{ if } \ x \geq 2(t+t_0) -r \ln (t+t_0) +1 ,\end{array}
\right.
$$
is a generalized super-solution of~\eqref{eq}. 

Next we prove that, up to increasing $K>0$ if necessary, 
\begin{equation}\label{comptime0}
u_0(x) \leq \overline{u} (0, x).
\end{equation}
Since $u_0 \leq 1$, we only have to check that
\begin{equation}\label{compt0super}
\forall x \geq  2t_0- r \ln t_0 + 1, \quad u_0 (x) \leq K \widehat{u} (t_0, x).
\end{equation}
By \eqref{def:psi}, \eqref{supersol}, we have  
$$\widehat{u} (t_0,  x) \geq C (t_0) e^{-x}  w\left( \frac{x-2t_0 +r' \ln t_0}{\sqrt{t_0}} \right),$$
for some $C(t_0) >0$. Then, due to 
$w (y) \sim C y^k$ as $y \to +\infty$ with $C>0$ according to Lemma~\ref{lem:ode-w}, up to decreasing the value of the constant $C(t_0)$ we get that
$$
\forall x \geq  2t_0- r \ln t_0 + 1, \quad \widehat{u} (t_0,  x ) \geq C (t_0) x^k e^{-x}.
$$
From this and \eqref{tail-u0} it follows that, for $K>0$ large enough, \eqref{compt0super} holds, and so does \eqref{comptime0}. However, we point out that the assumption $r< \frac{3}{2}$ played a crucial role here, since when $r = \frac{3}{2}$ then $w (y)$ has a gaussian behavior as $y \to +\infty$ and \eqref{compt0super} cannot hold regardless of the multiplicative constant~$K$.

Still, when $r< \frac{3}{2}$, and applying the parabolic comparison principle, we get that
\begin{equation}\label{comparisonsuper1}
u (t,x) \leq \overline{u} (t,x),
\end{equation}
for any $t\geq 0$ and $x \in \mathbb{R}$.

In particular, for $t$ large enough and $x \geq 2 (t+t_0) - r \ln (t+t_0) + \sqrt{t}$, we find that
\begin{eqnarray*}
u(t,x)&  \leq &  K e^{-(x-2 (t+t_0) + r' \ln (t+t_0))} (t+t_0)^{\frac{1}{2} + r' - r} w \left( \frac{x-2 (t+t_0) + r' \ln (t+t_0)}{\sqrt{t+t_0}} \right) \\
&  \leq &   K \left( \sup_{y \geq 1} \frac{w (y)}{y^k} \right) e^{-(x-2 (t+t_0))} (t+t_0)^{\frac{1}{2}  - r}  \times \left( \frac{x-2 (t+t_0) + r' \ln (t+t_0)}{\sqrt{t+t_0}} \right)^k \\
&  \leq &   K \left( \sup_{y \geq 1} \frac{w (y)}{y^k} \right) e^{-(x-2 (t+t_0))}  \left( x-2 (t+t_0) + r' \ln (t+t_0) \right)^k \\
&  \leq &   K \left( \sup_{y \geq 1} \frac{w (y)}{y^k} \right)   e^{-(\sqrt{t}-r\ln(t+t_0))}(\sqrt t +(r'-r)\ln(t+t_0))^k,
%&  \leq &   2 K \left( \sup_{y \geq 1} \frac{w (y)}{y^k} \right) \times t^{\frac{k}{2}} e^{-\sqrt{t}},
\end{eqnarray*}
where we have used the fact that $k = 1 -2r$. Since Lemma~\ref{lem:ode-w} ensures that $ \sup_{y \geq 1} \frac{w (y)}{y^k} < + \infty$, we conclude that
\begin{equation}\label{far way zone}
\lim_{t \to +\infty}\sup_{x \geq 2 (t+t_0) - r \ln (t+t_0) + \sqrt{t}} u(t,x) = 0.
\end{equation}

To obtain an upper estimate between the expected moving frame of the level sets, i.e. $2 t - r\ln t + \mathcal{O}(1)$, and $2 (t+t_0)- r \ln (t+t_0) + \sqrt{t}$ beyond which the above limit holds, the proof differs slightly between the delay and advance cases. We detail this in the next two steps.

\paragraph{Step 3: conclusion in the delay case $0\leq r<\frac32$.} Here, $r' = r$ by~\eqref{r'r} so that \eqref{comparisonsuper1} yields
$$
u (t,x) \leq K e^{-(x-2(t+t_0) + r \ln (t+t_0))
} \sqrt{t+t_0} \times w\left( \frac{x-2(t+t_0) + r \ln (t+t_0)}{\sqrt{t+t_0}} \right),
$$
for $x \geq 2(t+t_0) - r \ln (t+t_0) +1$, where $t_0 >1$ is fixed large enough. Recalling that $w(y) \sim C y^k$ with $k = 1-2r \leq 1$ as $y \to +\infty$, and also that $w (0) = 0$ and $w' (0)=1$, we have that $w(y)/y$ is bounded on $(0,+\infty)$. Then
\begin{equation*}
u (t,x) \leq  K \left(\sup _{y\geq 0} \frac{w(y)}{y}\right)   (x-2(t+t_0) + r \ln (t+t_0) ) e^{-(x-2(t+t_0) + r \ln (t+t_0))},
\end{equation*}
for $x \geq 2(t+t_0) - r \ln (t+t_0) +1$, from which the conclusion~\eqref{conclusion-super-nancy} of Proposition~\ref{prop:super-nancy} follows. Furthermore, evaluating the previous inequality at $x =2t - r \ln t +y$
with $t \geq 1$ and $y \geq 2t_0 +1$, we find some $k_1>0$ large enough so that
$$
u(t,2t- r \ln t +y) \leq k_1 (y+1) e^{-y} \quad \text{ for any $t \geq 1$, $y \geq 2t_0 +1$}.
$$
From the global boundedness of $u$ and the positivity of the right-hand term for $y \geq 0$, one then infers~\eqref{upper estimate front}, which concludes the proof of Proposition \ref{prop:super-nancy} in the delay case.

\paragraph{Step 4: conclusion in the advance case $r<0$.} Here, $r'=0 > r$. By \eqref{r'r}, the same computation as in Step 3 only provides an upper estimate on the location of the level sets of the form $2t - r \ln t + C_m \ln (\ln t)$. Therefore we must refine the argument. First, for $t\geq 1$, \eqref{comparisonsuper1} now yields
$$
u\left(t, 2(t+t_0) - r \ln (t+t_0) + \sqrt{t} \right) \leq   K e^{- \sqrt{t}} \sqrt{t+t_0}\times w \left(\frac{-r \ln (t+t_0) + \sqrt{t}}{\sqrt{t+t_0}} \right) ,$$
and thus there is $T\gg 1$ such that 
\begin{equation}\label{supersol_new_advance}
u\left(t ,2(t+t_0)-  r \ln (t+t_0) + \sqrt{t} \right) \leq  2 K \sqrt{t} e^{-\sqrt{t}}  w(1), \quad \forall t\geq T. 
\end{equation}
Besides, since 
$$\varphi (z) := z e^{-z}$$ satisfies $\varphi '' + 2 \varphi ' + \varphi = 0$ and $\varphi ' \leq 0$ on $(1,+\infty)$, and recalling that $r<0$ here, it is straightforward that, for any $K_2 > e$,
$$\overline{u}_2 (t,x) := \left\{ 
\begin{array}{ll}
1 & \ \mbox{ if } \ x < 2t - r \ln t  +1 , \vspace{3pt} \\
\min \{1, K_2 \,\varphi (x-2t + r \ln t )  \}  & \ \mbox{ if } \ x \geq 2t - r \ln t +1 ,\end{array}
\right.
$$ is a generalized super-solution of \eqref{eq}. Furthermore,
$$\overline{u}_2 \left(t, 2(t+t_0)-r \ln (t+t_0) + \sqrt{t} \right) \sim K_2 e^{-2t_0} \sqrt{t} e^{-\sqrt{t}}\quad \text{ as } t \to +\infty. 
$$
Putting this together with~\eqref{supersol_new_advance}, we get, up to increasing $T$ if necessary, that
$$
u\left (t, 2(t+t_0) -r \ln (t+t_0) + \sqrt{t}\right ) \leq \overline{u}_2 \left(t, 2(t+t_0)-r \ln (t+t_0) + \sqrt{t} \right), \quad \forall t \geq T,
$$
 provided that $K_2>2Kw(1) e^{2t_0}$. As in Step~2, due to $u \leq 1$, we may increase $K_2>0$ if necessary so that 
$$u(T,x) \leq \overline{u}_2 (T,x),$$
for all $x \leq 2(T+t_0) - r \ln (T+t_0) + \sqrt{T}$. Applying the parabolic comparison principle on
$$
\Omega_T:=\{ (t,x): t \geq T \, \mbox{ and } \, x \leq 2(t+t_0) - r \ln (t+t_0) + \sqrt{t} \},
$$
we end up with
$$
u(t,x) \leq \overline{u}_2 (t,x), \quad \forall (t,x)\in \Omega_T.
$$
The estimate \eqref{upper estimate front} follows immediately from the definition of $\overline u_2$, at least for $t\geq T^*$  with $T^*>1$ large enough and $1 \leq y \leq \frac{\sqrt{t}}{2}$ with $k_1 = K_2$. For $0 \leq y \leq 1$, or for $1 \leq t \leq T^*$ and $0 \leq y \leq \frac {\sqrt{t}} 2$, the same estimate holds up to increasing the factor~$k_1$, thanks to the boundedness of $u$. Moreover, putting the previous inequality together with~\eqref{far way zone}, since any $m$-level set of $\overline{u}_2$ is located around $2t - r \ln t + \mathcal{O}(1)$, we finally deduce~\eqref{conclusion-super-nancy}. \qed

\subsection{An alternate proof in the advance case ($k>1$)}\label{ss:alternate-proof-upper}

We start with an estimate on the solution $v(t,x)$ to \eqref{heat+v} (the linearized equation of \eqref{eq} around the invading steady state) starting from $u_0$, at a moving point $x=2t+\mathcal O(\sqrt{t})$. 

\begin{lemma}\label{lem:heat+v} Let $v=v(t,x)$ be the solution to
\begin{equation}
\label{heat+v}
 \partial _t v=\partial_{xx}  v+v, \quad t>0,\;  x\in \R,
\end{equation}
starting from $u_0$ satisfying Assumption \ref{ass:initial} with $k>1$. Let $c>0$ and $t_0>0$ be given. Then there is $c_0>0$ such that
$$
v(t,2t+c\sqrt t)\leq c_0 t^{\frac k 2} e^{-c\sqrt t}, \quad \forall t\geq t_0.
$$
\end{lemma}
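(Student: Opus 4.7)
The plan is to exploit the linearity of \eqref{heat+v}: setting $\tilde v(t,x) = e^{-t} v(t,x)$ reduces it to the pure heat equation with initial datum $u_0$, giving the explicit representation
\[
v(t,x) = \frac{e^{t}}{\sqrt{4\pi t}} \int_{\R} e^{-(x-y)^{2}/(4t)}\, u_0(y)\, dy.
\]
I would evaluate this at $x = 2t + c\sqrt t$ and split the spatial integral at $y=1$, so as to use the tail assumption \eqref{tail-u0} on $\{y>1\}$ and only the bound $u_0 \leq 1$ on $\{y \leq 1\}$.

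For the main contribution $y>1$, I would substitute $u_0(y) \leq A y^k e^{-y}$ and use the algebraic identity
\[
\frac{(x-y)^{2}}{4t} + y - t \;=\; \frac{\bigl(y - (x-2t)\bigr)^{2}}{4t} + (x - 2t),
\]
which at $x = 2t + c\sqrt t$ rewrites the integrand as $y^{k}\, e^{-(y - c\sqrt t)^{2}/(4t)}\, e^{-c\sqrt t}$. The rescaling $y = \sqrt t\,(c+u)$ (equivalently $v = y/\sqrt t$) factors out $t^{(k+1)/2} e^{-c\sqrt t}$, leaving
\[
\int_{1/\sqrt t}^{+\infty} v^{k}\, e^{-(v-c)^{2}/4}\, dv,
\]
which is bounded uniformly in $t \geq t_0$ since $k > 1$. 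Multiplying by the heat-kernel prefactor $1/\sqrt{4\pi t}$ produces exactly a bound of the form $c_0\, t^{k/2} e^{-c\sqrt t}$.

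It remains to absorb the contribution of $y \leq 1$. Here I would only use $u_0 \leq 1$ together with a standard Gaussian tail estimate $\int_{a}^{+\infty} e^{-s^{2}/(4t)}\, ds \leq 2 t\, a^{-1} e^{-a^{2}/(4t)}$, applied with $a = x-1 = 2t + c\sqrt t - 1$. Expanding $(x-1)^2/(4t) \geq t + c\sqrt t - 1$ for $t$ large enough, the factor $e^{t}$ cancels and one is left with a remainder of order $t^{-1/2} e^{-c\sqrt t}$, which is trivially dominated by $c_0 t^{k/2} e^{-c\sqrt t}$ since $k > 1$.

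The only real difficulty is bookkeeping: three different sources of powers of $t$ (the normalization $1/\sqrt t$ from the heat kernel, the Jacobian $\sqrt t$ from the rescaling, and the polynomial factor $(c\sqrt t + \sqrt t\, u)^{k} \sim t^{k/2}$) must combine to yield exactly $t^{k/2}$, and the rescaled integral in $v$ must remain uniformly convergent as $t \to +\infty$. The latter point is where the hypothesis $k > 1$ enters (although $k > -1$ would already suffice for this step alone), and it also ensures that the boundary piece $y \leq 1$, which decays only like $t^{-1/2}$, is genuinely negligible against the announced rate $t^{k/2}$.
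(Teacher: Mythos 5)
Your proposal is correct and follows essentially the same route as the paper: the explicit heat-kernel representation $e^{-t}v(t,x)=\int_\R G(t,x-y)u_0(y)\,dy$, the split of the $y$-integral at $y=1$, a Gaussian completion-of-the-square / rescaling for the tail part $y>1$ (your substitution $y=\sqrt t\,v$ is an affine reparametrization of the paper's $y=2\sqrt t\,z+c\sqrt t$, and leads to the same bounded integral $\int(2z+c)^k e^{-z^2}\,dz$), and a routine Gaussian-tail bound for the boundary part $y\le 1$. You also correctly identify, as the paper does in a final remark, that $k>-1$ (rather than $k>1$) is the true hypothesis needed for this integral to converge and for the boundary term $O(t^{-1/2}e^{-c\sqrt t})$ to be absorbed into $c_0 t^{k/2}e^{-c\sqrt t}$.
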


\begin{proof} Denoting $G(t,x):=\frac{1}{\sqrt{4\pi t}}e^{-\frac{x^2}{4t}}$ the one dimensional Heat kernel, we  have
\begin{equation}\label{1}
e^{-t} v(t,x) =\int_\R G(t,x-y)u_0(y)\,dy\leq  \int _{-\infty}^1 G(t,x-y)\,dy+A \int_1 ^{+\infty} G(t,x-y)  y^ke^{-y}\, dy,
\end{equation}
from Assumption \ref{ass:initial}. Evaluating at $x=2t+c\sqrt t$, the first term in the above right hand side is nothing else than
\begin{equation}
\label{2}
\frac{1}{\sqrt \pi}\int _{-\infty}^{-\sqrt t -\frac c 2+\frac{1}{2\sqrt t}} e^{-z^2}\,dz\sim \frac{1}{2\sqrt \pi}\frac{1}{\sqrt t}e^{-(\sqrt t +\frac c 2-\frac{1}{2\sqrt t})^2}, \text{ as } t\to +\infty.
\end{equation}
As for the second term, using the change of variable $y=2\sqrt{t}z+x-2t$ with $x=2t+c\sqrt t$, some straightforward computations yield
\begin{equation}\label{3}
A\int_1 ^{+\infty} G(t,x-y)  y^ke^{-y}\, dy = A\frac{t^{\frac{k}{2}}e^{-(t+c\sqrt{t})}}{\sqrt{\pi}}\int^{+\infty}_{-\frac{c}{2}+\frac{1}{2\sqrt t}}(2z+c)^ke^{-z^2}\ dz \leq c' t^{\frac{k}{2}}e^{-(t+c\sqrt{t})},
 \end{equation}
 for some $c'>0$.  The combination of \eqref{1}, \eqref{2} and \eqref{3} concludes the proof (and so if we only assume $k>-1$ rather than $k>1$).
\end{proof}

The above upper estimate now allows us to use the minimal traveling wave, see Lemma~\ref{lem:tw-sub-super}, as an accurate super-solution.

\begin{proof}[Proof of Proposition \ref{prop:super-nancy} when $k>1$]  We start with some preparation. From Lemma \ref{lem:heat+v} (and the comparison principle), there is $c_0>0$ such that
\begin{equation*}
u(t,2t+\sqrt t)\leq c_0  t^{\frac k 2}e^{-\sqrt t}, \quad \forall t\geq 1.
\end{equation*}
 Since $U(z)\sim ze^{-z}$ as $z\to +\infty$, we can find $z_0>0$ such that $U(z)\geq \frac 1 2 ze^{-z}$ for all $z\geq z_0$. Select $T>1$ large enough such that
 $$
 \sqrt t+r\ln(1+t)\geq \frac 1 2 \sqrt t \geq z_0, \quad \forall t \geq T,
 $$
 and $K>1$ large enough such that
 \begin{equation}\label{22}
 K>4c_0, \; \text{ and } \;  KU(\sqrt T+r\ln (1+T))>1.
 \end{equation}
 
 We aim at applying the comparison principle in the region $\Omega_T:=\{(t,x)\in [T,+\infty)\times \R:  x\le 2t+\sqrt{t}\}$. We define
 $$
 \overline{u}(t,x):=KU(x-2t+r\ln (1+t)),
 $$ 
which is a super-solution since
\begin{eqnarray*}
 \partial _t \overline u -\partial_{xx}\overline u-\overline u(1-\overline u)&=&K\left(-2+\frac{r}{1+t}\right)U'-KU''-KU+K^2U^2\\
&=& K\frac{r}{1+T}U'+(K^2-K)U^2
\end{eqnarray*} 
is positive since $K>1$, $r<0$, $U'<0$. On the lateral boundary ($t\geq T$, $x=2t+\sqrt t$) of $\Omega_T$, we have
\begin{eqnarray*}
\overline u (t,2t+\sqrt t)&=& KU(\sqrt t +r\ln(1+t))\\
&\geq & \frac K 2 (\sqrt t+r\ln(1+t))e^{-\sqrt t -r\ln(1+t)}\\
&\geq & \frac K 4 \sqrt t e^{-\sqrt t} (1+t)^{-r}\geq \frac K 4 t^{\frac k 2} e^{-\sqrt t}\geq u(t,2t+\sqrt t),
\end{eqnarray*}
from \eqref{22}. On the boundary ($t=T$, $x\leq 2T+\sqrt T$) of $\Omega_T$ we have 
$$
\overline u (T,x)\geq KU(\sqrt T +r\ln(1+T))>1\geq u(T,x).
$$
We thus deduce from the comparison principle that 
$$
u(t,x)\leq KU(x-2t+r\ln (1+t)), \quad \text{ for all } (t,x)\in \Omega_T,
$$
from which the conclusion of  Proposition \ref{prop:super-nancy} easily follows. 
\end{proof}

\section{The lower estimate}\label{s:lower}

This section is devoted to the following lower estimate. Recall that $E_m$ denotes the $m$-level set of the solution, as defined in~\eqref{defi-level-set}.

\begin{proposition}[Lower estimate]\label{prop:lower}
Let $u=u(t,x)$ be the solution to \eqref{eq} starting from $u_0$ satisfying Assumption~\ref{ass:initial} with $k>-2$. Let $m\in(0,1)$ be given. Then, there are $C_m>0$ and $T_m>0$ such that
\begin{equation}
\label{conclusion-lower}
E_m(t)\subset (2t-r\ln t -C_m,+\infty), \quad \forall t\geq T_m,
\end{equation}
where
$$r =\frac{1-k}{2} < \frac{3}{2}.$$
Moreover, there are $k_2>0$ and $\sigma_2>0$ such that
\begin{equation}\label{low estimate front}
u(t,2t-r\ln t+y)\ge k_2ye^{-y}\quad \text{for any  $t\geq 1$, $0 \le y\le \sigma_2\sqrt{t}$}.
\end{equation}
\end{proposition}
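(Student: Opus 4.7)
The plan is to mirror the argument of Proposition~\ref{prop:super-nancy}, now building a sharp sub-solution from the same family of ansatz introduced in Section~\ref{s:formal}. The new difficulty, compared with the super-solution construction, is that in the sub-solution inequality $\partial_t \underline u-\partial_{xx}\underline u-\underline u(1-\underline u)\leq 0$ the quadratic term $-\underline u^{2}$ now contributes on the wrong side, so the construction will require more care than in Section~\ref{ss:proof-upper}.

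Concretely, we would start from the family
$$
\phi_{r''}(t,x) := e^{-z}\,(t+t_0)^{\frac{1}{2}+r''-r}\, w\!\left(\frac{z}{\sqrt{t+t_0}}\right),\qquad z:=x-2t+r''\ln(t+t_0),
$$
with $w$ given by Lemma~\ref{lem:ode-w} and an adjustable parameter $r''\leq r$. Following Step~1 of Section~\ref{ss:proof-upper} with $r'$ replaced by $r''$, one obtains
$$
\partial_t \phi_{r''}+\Big(-2+\frac{r''}{t+t_0}\Big)\partial_z \phi_{r''}-\partial_{zz}\phi_{r''}-\phi_{r''}=\frac{r''\, e^{-z}}{(t+t_0)^{1+r-r''}}\, w'\!\left(\frac{z}{\sqrt{t+t_0}}\right).
$$
We would then seek a sub-solution of the form
$$
\underline u(t,x):=\varepsilon\, \Big(1-\frac{N}{(t+t_0)^{\alpha}}\Big)\,\phi_{r''}(t,x)
$$
on the half-strip $\{z\geq 0\}$, with $\varepsilon$ small and $N$, $\alpha$, $r''$ tuned so that the negative contribution $-\varepsilon\alpha N(t+t_0)^{-\alpha-1}\phi_{r''}$ produced by the time-derivative of the prefactor jointly absorbs the residual drift term and the quadratic $\underline u^{2}$. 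The choice of $r''$ should depend on the sign of $r$, in analogy with \eqref{r'r}: roughly $r''\leq 0$ in the delay case $r\geq 0$ (so that $r''\,w'$ has the right sign near the boundary), and $r''=r$ in the advance case $r<0$.

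Once the sub-solution inequality is checked on $\{z\geq 0\}$, the fact that $w(0)=0$ makes $\phi_{r''}$ vanish on the moving boundary $x=2t-r''\ln(t+t_0)$, so extending $\underline u$ by zero on its left yields a generalized sub-solution of \eqref{eq} on all of $\R$. The initial inequality $\underline u(0,\cdot)\leq u_0$ then follows from Assumption~\ref{ass:initial} and the asymptotics $w(y)\sim Cy^k$ of Lemma~\ref{lem:ode-w}\,(ii), provided $t_0$ is large enough and $\varepsilon$ is small enough; this is exactly where the hypothesis $r<\tfrac{3}{2}$ (equivalently $k>-2$) is essential, since at the critical value $r=\tfrac{3}{2}$ the function $w$ has Gaussian rather than polynomial decay. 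The parabolic comparison principle then gives $u\geq \underline u$ on $[0,+\infty)\times\R$, and evaluating this at $x=2t-r\ln t+y$ and using $w(0)=0$, $w'(0)=1$, we would obtain a lower bound of order $c\,y\,e^{-y}$ for bounded $y\geq 0$; this bound should extend to the range $0\leq y\leq \sigma_2\sqrt t$ as long as $w(y')/y'$ stays bounded below, yielding both \eqref{conclusion-lower} and \eqref{low estimate front}.

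The hard part will be the sub-solution inequality itself. Unlike in Section~\ref{ss:proof-upper}, the nonlinear term $-\underline u^{2}$ no longer works in our favor and must be controlled jointly with the residual drift $r''\,w'$, whose sign may vary along the half-line since $w'$ need not be of constant sign when $k<0$. This forces a careful joint choice of the parameters $r''$, $\varepsilon$, $\alpha$ and $N$ and, as in the upper estimate, it is likely that the delay case $r\geq 0$ and the advance case $r<0$ will require slightly different refinements---for instance, gluing the $\phi_{r''}$-based sub-solution in the far-field to a shifted minimal wave (available as a sub-solution by Lemma~\ref{lem:tw-sub-super} when $k\leq 1$) or to a small constant in the bulk via a matching argument.
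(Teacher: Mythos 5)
Your Step~1 is in the spirit of the paper's, but there are three issues, and the third is a genuine structural gap.

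First, a sign error: with your prefactor $1-\tfrac{N}{(t+t_0)^\alpha}$ the time-derivative is $+\alpha N(t+t_0)^{-\alpha-1}>0$, which contributes a \emph{positive} term to $\partial_t\underline u$ and therefore hurts rather than helps the sub-solution inequality. The prefactor must be \emph{decreasing}; the paper uses $1+\tfrac{M}{\sqrt t}$.

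Second, the choice of $r''$ should not depend on the sign of $r$. The paper takes $r'=r-2<0$ uniformly in Lemma~\ref{lem:lower_0}, and this very specific choice is the reason the quadratic term is tractable: with $r'=r-2$ the factor $e^{-z}t^{2+r'-r}w^2$ inside $\mathcal L\underline u$ reduces to $e^{-z}w^2$ with no residual power of $t$, and the whole ansatz has amplitude $\mathcal O(t^{-3/2})$, so the quadratic is $\mathcal O(t^{-3})$ and is absorbed by the drift residual $\tfrac{r'e^{-z}w'}{t^3}$. Your suggested $r''=r$ in the advance case fails: then $\phi_r(t,\cdot)$ has $\mathcal O(1)$ amplitude, the quadratic $\underline u^2\approx\varepsilon^2 z^2$ does \emph{not} decay in $t$, while the linear residual is $\mathcal O(1/t)$, so at fixed $z$ and large~$t$ one gets $\mathcal L\underline u\approx\varepsilon^2 z^2>0$.

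Third, and most importantly: any sub-solution of the type $\phi_{r''}$ with $r''<r$ has $\|\underline u(t,\cdot)\|_{L^\infty}\to 0$ algebraically as $t\to+\infty$ (precisely because of the decaying amplitude that makes the nonlinearity manageable), so it \emph{cannot} by itself locate the level sets around $2t-r\ln t$; its own level sets run off to $-\infty$. The paper's proof therefore necessarily proceeds in two stages: the first sub-solution is used only to extract the pointwise bound $u(t,2t+\sqrt t)\gtrsim \varepsilon e^{-\sqrt t}t^{1/2-r}$, and a \emph{second} sub-solution $\underline u_2$, of genuinely KPP-front type and of $\mathcal O(1)$ amplitude, is then compared with~$u$ on the moving half-space $\{x\le 2t+\sqrt t\}$ using that pointwise bound on its lateral boundary. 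In the delay case $0\le r<\tfrac32$, $\underline u_2=\alpha U(x-2t+r\ln(1+t))$ is a sub-solution by Lemma~\ref{lem:tw-sub-super}. In the advance case $r<0$, the traveling wave is a \emph{super}-solution (Lemma~\ref{lem:tw-sub-super} requires $k\le 1$), so your suggestion of gluing to a shifted minimal wave is not available; the paper instead constructs a new ODE profile $\varphi_\gamma$ (Lemma~\ref{lem_yet}) solving $\varphi''+2\varphi'+\varphi-\gamma\varphi^2=0$, and uses $\underline u_2= e^{\eta z/\sqrt t}\varphi(z)$, $z=x-2t+r\ln t$, glued to the constant $1/(2\gamma)$. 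Your vague ``gluing to a small constant in the bulk'' would only give a lower bound on the level sets of order $2t-\mathcal O(\sqrt t)$, far weaker than $2t-r\ln t-C_m$. Lemma~\ref{lem:small-level-sets} is then needed to propagate the bound from small $m$ to all $m\in(0,1)$. None of this second-stage argument is present in your proposal.
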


\subsection{A lower bound in the far away zone}

We begin by demonstrating a lower estimate on the solution to \eqref{eq}, starting from $u_0$, evaluated at a moving point $x=2t+\mathcal O(\sqrt t)$. Note that this estimate aligns with a shift of the minimal front~$U$, so that its level sets are positioned around $x= 2t - r \ln t$. This will turn out to be crucial for bootstrapping our argument with a second sub-solution in subsection~\ref{sec: proof}.

\begin{lemma}\label{lem:lower_0}
Let $u= u(t,x)$ be the solution to \eqref{eq} starting from $u_0$ satisfying Assumption~\ref{ass:initial} with $k>-2$. Then there exists $\varepsilon >0$ such that
\begin{equation}\label{eq:lower_0}
u(t,2t+\sqrt{t}) \geq  \varepsilon e^{-\sqrt{t}} t^{\frac{1}{2} - r}, \quad \forall t\geq T_0,
\end{equation}
for some $T_0>1$ large enough.
\end{lemma}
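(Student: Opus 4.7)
The approach is to construct a sub-solution of~\eqref{eq} modelled on the ansatz of Section~\ref{s:formal}, mirroring the super-solution construction of Proposition~\ref{prop:super-nancy}. Concretely I would set
\[
\underline{u}(t,x) := \varepsilon\left(1+\frac{M}{\sqrt{t+t_0}}\right) \psi\!\left(t+t_0,\,x-2t+r''\ln(t+t_0)\right),
\]
with $\psi$ as in~\eqref{def:psi} and parameters $r''\in\mathbb{R}$, $\varepsilon>0$, $M\geq 0$, $t_0>1$ to be fixed. Compared with the super-solution from Section~\ref{ss:proof-upper}, the factor $1+M/\sqrt{t+t_0}$ has the opposite sign and produces, upon differentiating in time, a negative lower-order term which will absorb the quadratic nonlinearity. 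The parameter $r''$ controls the sign of the residual linear term involving $w'$ coming from the moving-frame computation.

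The same algebraic computation as in Step~1 of Section~\ref{ss:proof-upper} yields
\[
\mathcal{L}\underline{u} = \varepsilon\!\left(1+\tfrac{M}{\sqrt{t+t_0}}\right)\!\frac{r''\, e^{-z}\, w'(z/\sqrt{t+t_0})}{(t+t_0)^{1+r-r''}} - \frac{\varepsilon M\,\psi(t+t_0,z)}{2(t+t_0)^{3/2}} + \varepsilon^2\!\left(1+\tfrac{M}{\sqrt{t+t_0}}\right)^{\!2}\!\psi(t+t_0,z)^2,
\]
at $z=x-2t+r''\ln(t+t_0)\geq 0$. The strategy is to select $r''$ so that the first term is non-positive for all $z\geq 0$ (using the behaviour of $w'$ from Lemma~\ref{lem:ode-w}), and then to take $M$ large, $\varepsilon$ small, and $t_0$ large enough so that the middle absorbing term dominates the quadratic one pointwise. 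This gives $\mathcal{L}\underline{u}\leq 0$ on the moving half-plane $\{(t,x):\ t\geq 0,\ x\geq 2t-r''\ln(t+t_0)\}$.

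For the comparison: along the moving boundary $z=0$ one has $\underline{u}=0\leq u$. At the initial time $t=0$, the asymptotics $w(y)\sim Cy^k$ as $y\to+\infty$ from Lemma~\ref{lem:ode-w} match the lower bound $u_0(x)\geq a x^k e^{-x}$ from Assumption~\ref{ass:initial}, so that choosing $\varepsilon>0$ small enough gives $\underline{u}(0,x)\leq u_0(x)$ on the tail; near the initial boundary, $\psi$ vanishes linearly and the comparison is immediate since $u_0$ is bounded below on compact sets. The parabolic comparison principle then yields $u\geq\underline{u}$ on the half-plane, and evaluating at $x=2t+\sqrt{t}$ (which belongs to the region for large $t$), using $w(1)>0$ and continuity, gives
\[
u(t,\,2t+\sqrt{t}) \;\geq\; \varepsilon\,(1+o(1))\,w(1)\,t^{\,1/2-r}\,e^{-\sqrt{t}},\qquad t\to+\infty,
\]
which proves the lemma.

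\textbf{Main obstacle.} The crux is to verify the sub-solution inequality uniformly in $z\geq 0$. The absorbing term scales like $M\psi/(t+t_0)^{3/2}$ while the nonlinearity scales like $\psi^2$, so absorption essentially requires $\psi\lesssim M/(\varepsilon\,(t+t_0)^{3/2})$; this is delicate in the intermediate range $z\asymp\sqrt{t+t_0}$, where $\psi$ is neither damped by $e^{-z}$ nor by vanishing at the boundary. A case-split on $k$ (equivalently on the sign of $w'$ in the relevant range) reminiscent of the delay/advance dichotomy of Section~\ref{s:upper} should allow one to choose $r''$ so that the residual $r'' w'$ term cooperates as an additional absorber in that intermediate zone.
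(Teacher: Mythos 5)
Your overall scaffolding matches the paper's: the paper also builds the sub-solution from $\psi$ of~\eqref{def:psi}, multiplies by a factor $\varepsilon\,(1+M/\sqrt{t})$ whose time derivative produces the negative absorbing term, and shifts the drift parameter away from $r$. What you have not pinned down, however, is the actual constraint that makes the absorption go through, and your ``main obstacle'' paragraph points at the wrong place. The quadratic term in the computation, once you factor out $\varepsilon e^{-z}/t^{1+r-r''}$, carries the prefactor $t^{2+r''-r}\,e^{-z}\,w^2(z/\sqrt{t})$. For $z\asymp\sqrt{t}$ this is harmless: $e^{-z}=e^{-c\sqrt{t}}$ crushes everything. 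The genuinely delicate zone is $z=\mathcal{O}(1)$ (more generally $z\ll\sqrt{t}$), where $e^{-z}$ gives no help, $w(z/\sqrt{t})\approx z/\sqrt{t}$, and the prefactor reduces to $z^2\,t^{1+r''-r}$; the absorbing term $-\frac{M}{2}w\approx -\frac{Mz}{2\sqrt{t}}$ and the linear residual $r''w'\approx r''$ can only defeat the nonlinearity if $t^{1+r''-r}$ stays bounded, i.e.\ $r''\leq r-1$. The paper takes $r'=r-2$, which kills the $t$-dependence of the quadratic contribution altogether ($t^{2+r'-r}=1$) and makes the $\varepsilon$-smallness condition transparent and uniform. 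Without this explicit choice (or at least $r''<r-1$), your plan of ``$M$ large, $\varepsilon$ small, $t_0$ large'' cannot close: for, e.g., $r''=0$ and $r<1$ the quadratic term at fixed $z$ \emph{grows} relative to the absorber as $t\to\infty$ no matter how small $\varepsilon$ is.

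A second, smaller issue: you propose to ``select $r''$ so that $r''w'$ is non-positive for all $z\geq 0$.'' This is not achievable whenever $k<0$ (i.e.\ $r>1/2$): in that case $w'(0)=1>0$ while $w(y)\sim Cy^{k}\to 0$ forces $w'<0$ at infinity, so $w'$ changes sign and no single choice of sign for $r''$ makes $r''w'\leq 0$ everywhere. The paper handles this without any case-split on $k$: near $y=0$ it uses $w'>1/2$ together with $r'<0$, while for $y\geq\delta$ it invokes $\lim_{y\to\infty}w'/w=0$ from Lemma~\ref{lem:ode-w} to dominate $|r'w'|$ by $Mw$. Your ``delay/advance dichotomy'' analogy is misleading here; the only structural choice is $r'=r-2$ uniformly. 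The comparison at $t=t_0$ and the evaluation at $x=2t+\sqrt{t}$ that you sketch are fine and agree with the paper.
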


\begin{proof} We proceed into two steps. The first one consists in turning our computations from Section~\ref{s:formal} into an actual sub-solution of \eqref{eq}. Next, we will prove that this sub-solution provides the lower estimate at $x =2t + \sqrt{t}$.

\paragraph{Step 1: the first sub-solution.} For $t>0$, $z\geq 0$, we define as before
 \begin{equation}\label{def:psi-bis}
\psi (t,z) :=  e^{-z} t^{\frac 1 2 + r' - r} \, w \left( \frac{z}{\sqrt{t}}\right),
\end{equation} 
where 
the function $w$ is given by Lemma~\ref{lem:ode-w}, and now
\begin{equation}\label{r'r-bis}
r ' = r - 2 < 0.
\end{equation}
For $t_0>1$, $M>0$, $\ep>0$ to be specified later, let us prove that 
\begin{equation}
\label{subsol}
\underline  u(t,x):=\left(1+\frac{M}{\sqrt t}\right)\ep \,\psi(t,x-2t+r' \ln t)
\end{equation}
is a sub-solution of \eqref{eq} for all $t \geq t_0$ and $x \geq 2t - r' \ln t$. By construction $\underline  u(t,x)$ also satisfies the Dirichlet boundary condition
$$\underline{u} (t, 2t - r' \ln t) = 0.$$
We use the computations of Step~1 in subsection~\ref{ss:proof-upper} again and get, denoting $z=x-2t+r'\ln t$, 
$$
\begin{array}{lll}
\mathcal L \underline u (t,x) &= & \partial_t \underline {u} - \partial _{xx} \underline u -\underline u +\underline u ^2\vspace{7pt}\\
& =&  \ep\frac{e^{-z}}{t^{1+r-r'}}\Bigg\{ -\frac M 2 w\left(\frac{z}{\sqrt t }\right)\\
&&\qquad \qquad\qquad+\left(1+\frac{M}{\sqrt t}\right)\left[r'w'\left(\frac{z}{\sqrt t }\right)+\left(1+\frac{M}{\sqrt t}\right)\ep e^{-z}t^{2+r'-r}w^2\left(\frac{z}{\sqrt t }\right)\right]\Bigg\}\vspace{7pt}\\
& = &  \ep\frac{e^{-z}}{t^{3}}\Bigg\{ -\frac M 2 w\left(\frac{z}{\sqrt t }\right)+\left(1+\frac{M}{\sqrt t}\right)\left[r'w'\left(\frac{z}{\sqrt t }\right)+\left(1+\frac{M}{\sqrt t}\right)\ep e^{-z} w^2\left(\frac{z}{\sqrt t }\right)\right]\Bigg\},
\end{array}
$$
where on the last line we simply used $r' = r-2$. 

Next, we distinguish two subdomains. First, we use the fact that $w' (0) =1$, hence there exists $\delta >0$ such that $w' (y)> \frac{1}{2}$ in $y \in [0,\delta]$. Then, due to $r' <0$, we have for any $t \geq 1$ and $z = x -2t + r' \ln t \in (0,\delta  \sqrt{t}]$,
\begin{eqnarray*}
\mathcal L \underline u (t,x)  \leq \ep\frac{e^{-z}}{t^{3}}\left(1+\frac{M}{\sqrt t}\right)\left[ \frac{r'}{2} +\left(1+ M\right)\ep \max_{0\leq y \leq \delta} w^2(y) \right],
\end{eqnarray*}
which is negative for any 
\begin{equation}\label{eps_sub1}
\varepsilon < \frac{-r'}{2 (1+M) \max_{0\leq y \leq \delta} w^2(y) }.
\end{equation}
Furthermore, from Lemma \ref{lem:ode-w}, we can choose $M\gg 1$ such that, for $y \geq \delta$,
$$\frac{8 |r' w' (y)|}{w (y)} \leq M , \qquad w (y)  \leq M y^k .$$
Then, for $t_0$ such that
$$
1 + \frac{M}{\sqrt{t_0}} \leq 2,
$$
we get,  for any $t\geq t_0$ and any $z = x - 2t + r' \ln t \geq \delta \sqrt{t}$, that
\begin{eqnarray*}
\mathcal L \underline u (t,x) & = & \ep\frac{e^{-z}}{t^{3}}\left\{ -\frac M 2 w\left(\frac{z}{\sqrt t }\right)+\left(1+\frac{M}{\sqrt t}\right)\left[r'w'\left(\frac{z}{\sqrt t }\right)+\left(1+\frac{M}{\sqrt t}\right)\ep e^{-z} w^2\left(\frac{z}{\sqrt t }\right)\right]\right\}\\
& \leq &  \ep\frac{e^{-z}}{t^{3}} w\left(\frac{z}{\sqrt t }\right) \, \left\{ -\frac M 4+ 4 M \ep e^{-z} \frac{z^{k}}{t^{k/2 }}\right\}.
\end{eqnarray*}
Up to increasing $t_0$ if necessary, we have for any $t \geq t_0$ that $\max _{z\geq \delta \sqrt t}e^{-z}z^k=e^{-\delta \sqrt t} (\delta \sqrt t)^k$, and then (up to also reducing $\ep>0$),

$$
\mathcal L \underline u (t,x)  \leq   \ep\frac{e^{-z}}{t^{3}} w\left(\frac{z}{\sqrt t }\right) \, \left\{ -\frac M 4+ 4 M \ep e^{-\delta \sqrt{t} } \delta^k \right\}<0,
$$
for any $x \geq 2t - r' \ln t + \delta \sqrt{t}$.

Putting the inequalities in both subdomains together, we found $M>0$ such that, for any $\varepsilon >0$ and $t_0 >1$ respectively small and large enough, the function~\eqref{subsol} is a sub-solution of~\eqref{eq} for $t \geq t_0$ and $x \geq 2t -r' \ln t$.

\paragraph{Step 2: comparison with the solution.} We claim that, up to reducing $\varepsilon >0$ and increasing $t_0 >1$, we have that
\begin{equation}\label{step2_sub1_toprove}
u_0 \geq \underline{u} (t_0, \cdot),
\end{equation}
where $\underline{u}$ is defined in \eqref{subsol}. Recall that $\underline{u} (t, x)$ is supported in $\{ x \geq 2 t - r' \ln t \}$, and hereafter we extend it continuously by~$0$ on $\{x < 2 t - r' \ln t \}$. Therefore, by Assumption~\ref{ass:initial} and up to increasing $t_0$, we only need to show that
$$
a x^k e^{-x} \geq  \underline{u} (t_0, x), \quad \forall x \geq 2t_0 -r' \ln t_0 \geq 1,
$$ 
or equivalently,
\begin{equation}\label{33}
a x^k   \geq C(t_0) \varepsilon  w\left( \frac{x-2t_0 +r' \ln t_0}{\sqrt{t_0}}\right), \quad \forall x \geq 2t_0 -r' \ln t_0 \geq 1,
\end{equation}
where $C (t_0) := \left(1 + \frac{M}{\sqrt{t_0}} \right) e^{2t_0} t_0^{\frac{1}{2} - r}$. It follows from Lemma~\ref{lem:ode-w}, more precisely~\eqref{w-infinity}, that \eqref{33} is satisfied if~$\varepsilon$ is small enough.

We conclude that \eqref{step2_sub1_toprove} holds. Then, by the parabolic comparison principle,
\begin{equation*}
u (t,x) \geq \underline{u} (t+t_0 , x),
\end{equation*}
for any $t>0$ and $x \in \mathbb{R}$. One can verify that
$\|\underline{u} (t+t_0 , \cdot)\|_{L^{\infty} (\R)}$ decreases
algebraically to $0$ as $t \to +\infty$, due to $r' = r -2$. In particular, we cannot yet conclude anything on the large-time position of the level sets. However, this decay in time, and specifically the choice of $r'$, ensured that the nonlinear component of the reaction term was small enough when evaluating $\mathcal{L} \underline{u}$. Therefore, it played a crucial role in~$\underline{u}$ being a sub-solution.

Still, we now know that, for any $t>0$ and $x =2 t + \sqrt{t}$,
$$
u (t, 2t + \sqrt{t}) \geq  \underline{u} (t+t_0, 2t + \sqrt{t}) \geq \varepsilon e^{-\sqrt{t}}  e^{2t_0} (t+t_0)^{\frac{1}{2}  - r} w \left( \frac{\sqrt{t}-2t_0 + r' \ln (t+t_0)}{\sqrt{t+t_0}} \right), 
$$
and then
$$
u (t, 2t + \sqrt{t}) \geq\frac 1  2 \ep e^{2t_0} w(1) e^{-\sqrt t} t^{\frac 12 -r},
$$
provided $t$ is large enough. This concludes the proof of \eqref{eq:lower_0}. 
\end{proof}

\subsection{Proof of Proposition~\ref{prop:lower} }\label{sec: proof}

We now address how the lower estimate on the level sets of the solution to \eqref{eq} follows from Lemma~\ref{lem:lower_0}. The main idea is to construct a second sub-solution on a left half-line, which turns out to be more challenging in the advance case.

\paragraph{Step 3: the second sub-solution in the delay case $0 \leq r < \frac32$.} In this case, the minimal traveling wave itself serves as a sub-solution, see Lemma \ref{lem:tw-sub-super}. Denote by $U$ the solution to
$$\left\{
\begin{array}{l}
U '' + 2 U ' + U(1-U)=0 \quad \text{ on } \R, \vspace{3pt}\\
U (-\infty) = 1, \quad U(0)=\frac 12,  \quad U (+\infty)=0,
\end{array}
\right.
$$
which is known to satisfy $U'<0$ and $U(z)\sim Bze^{-z}$ as $z\to +\infty$, for some $B>0$. Then, for any $0<\alpha <1$ to be selected below, and due to $r \geq 0$, it is straightforward to check that
$$
\underline{u}_2 (t,x) := \alpha U (x-2t + r \ln (1+t))
$$
is a sub-solution of \eqref{eq} for $t\geq 0$ and $x\in \R$.

Next, we aim at applying the parabolic comparison principle on $\{(t,x): t\geq T, \, x\leq 2t+\sqrt t\}$ for some appropriate $T>T_0$, where $T_0$ is chosen as in Lemma \ref{lem:lower_0}. On the one hand, on the lateral boundary $x=2t+\sqrt t$, we have
$$
\underline{u}_2 (t,2t+\sqrt{t}) =\alpha U(\sqrt t+r\ln(1+t)) \sim \alpha B t^{\frac{1}{2}-r} e^{-\sqrt{t}}, \quad \text{ as } t\to +\infty. 
$$
In view of the lower estimate \eqref{eq:lower_0} of the solution $u$, it follows that, by setting $0<\alpha<\frac{\ep}{2B}$, we have, for some $T>T_0$ large enough, that
$$
u (t,2t+ \sqrt{t}) \geq \underline{u}_2 (t, 2t + \sqrt{t}),\quad \forall t\geq T.
$$
On the other hand, since spreading occurs (at speed $c=2$)  we have $\inf _{x\leq 0} u(T,x)\geq \frac 12$, up to increasing $T$ again. Such a time~$T$ now being fixed, and since $u (t,\cdot) >0$ for any $t >0$ by the strong maximum principle, we get that 
$$\inf _{x\leq 2T+\sqrt T} u(T,x)>0.$$
Hence, imposing $0<\alpha< \inf _{x\leq 2T+\sqrt T} u(T,x) $, we immediately get that $u(T,x)\geq \underline u_2(T,x)$ for all $x \leq 2T+\sqrt T$.

As a result, for any $0<\alpha< \min \{ \frac{\varepsilon}{2B}, \inf _{x\leq 2T+\sqrt T} u(T,x) \}$, there exists $T>0$ such that we can deduce from the comparison principle that
\begin{equation}\label{lower_delay_end}
u(t,x)\geq \alpha U(x-2t+r\ln(1+t)), \quad \forall t\geq T,\; \forall x\leq 2t+\sqrt t .
\end{equation}
Due to the asymptotics of $U$ as $y \to +\infty$, there exists some $k_2 >0$ small enough such that
$$
U(y) \geq \frac{2 k_2}{\alpha} y e^{-y}\quad \text{ for any }y \geq 0.
$$
Thus, from~\eqref{lower_delay_end}, we get
$$u(t,2t- r \ln t + y ) \geq \alpha U (y + r \ln (1+ 1/t)) \geq k_2 y e^{-y},$$
for any $t$ large enough and $0 \leq y \leq \sqrt{t} + r \ln t $. We infer that~\eqref{low estimate front} holds.

Moreover, \eqref{conclusion-lower} is a direct consequence of~\eqref{lower_delay_end} for \lq\lq small'' level sets, say $0<m<m_0:=\frac \alpha 2$, and then for all level sets $0<m<1$ thanks to the following result (which does not require $r\geq 0$ and will be used again later).

\begin{lemma}\label{lem:small-level-sets} Let $0<m_0<1$ be given. Then, if~\eqref{conclusion-lower} holds for any $0<m<m_0$, it actually holds for any $0<m<1$.
\end{lemma}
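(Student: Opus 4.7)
The plan is to upgrade the hypothesis from levels $m<m_0$ to any target $m\in[m_0,1)$ by propagating a pointwise lower bound forward in time with a compactly supported sub-solution built from the principal Dirichlet eigenfunction on a large interval. First, I would fix some $m_1\in(0,m_0)$ and apply the hypothesis at level $m_1$ to obtain constants $C_1,T_1>0$ with $E_{m_1}(t)\subset(2t-r\ln t-C_1,+\infty)$ for $t\geq T_1$. Because spreading at speed~$2$ together with $\liminf_{x\to-\infty}u_0>0$ forces $u(t,\cdot)>m_1$ near $-\infty$ for any large $t$, continuity and the intermediate value theorem upgrade this no-level-point statement to the pointwise bound
$$u(t,x)\geq m_1,\qquad \forall t\geq T_1,\ \forall x\leq 2t-r\ln t-C_1.$$

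I would then choose $R>0$ with $\lambda_R:=\pi^2/(4R^2)<1-m$, set $\phi_R(x):=\cos(\pi x/(2R))$ on $[-R,R]$ (extended by $0$ outside), and consider
$$v(s,x):=g(s)\,\phi_R(x-y_0),$$
where $y_0\in\R$ is a free parameter and $g\in C^1([0,+\infty))$ solves the logistic ODE $\dot g=(1-\lambda_R)g-g^2$ with $g(0)=m_1$. A direct computation, using $-\phi_R''=\lambda_R\phi_R$ on the support of $\phi_R$ and $0\leq\phi_R\leq 1$, shows that $v$ is a generalized sub-solution of \eqref{eq}, the downward corner at $x=y_0\pm R$ being harmless. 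Moreover, since $m_1<m<1-\lambda_R$, we have $g(s)\nearrow 1-\lambda_R>m$ as $s\to+\infty$, so there is a finite $\tau>0$ with $g(\tau)>m$.

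To conclude, I would apply the parabolic comparison principle to $v$ (centered at $y_0=x_0$) and $u$ on the strip $[t-\tau,t]\times\R$, where $x_0$ is any point such that the window $[x_0-R,x_0+R]$ fits inside the half-line where the pointwise bound of the first step holds at time $t-\tau$, namely $x_0\leq 2(t-\tau)-r\ln(t-\tau)-C_1-R$. The initial ordering $u(t-\tau,\cdot)\geq m_1\geq v(0,\cdot-x_0)$ then propagates to give $u(t,x_0)\geq g(\tau)\phi_R(0)=g(\tau)>m$. Since $\ln(t-\tau)=\ln t+o(1)$ as $t\to+\infty$, this holds for all $x_0\leq 2t-r\ln t-C_m$ with, say, $C_m:=C_1+R+2\tau+1$, which is exactly \eqref{conclusion-lower} at level $m$.

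The main subtlety I anticipate is the balance in choosing $R$: it must be large enough for $\lambda_R$ to stay below the reaction gap $1-m$, yet the resulting window of width $2R$ must still fit inside the region where the induction hypothesis provides a pointwise lower bound on~$u$. The construction uses neither the sign of $r$ nor the precise value of the logarithmic shift, which is consistent with the comment preceding the statement that this lemma will also be invoked in the advance regime later.
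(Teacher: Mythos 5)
Your proposal is correct, and it reaches the conclusion by a genuinely different construction than the paper. The paper's proof also begins by converting the level-set hypothesis at a low level ($m=\tfrac34 m_0$) into a pointwise lower bound $u(t,x)\ge\tfrac34 m_0$ for $x\le x^*(t):=2t-r\ln t-C_0$, but then introduces the auxiliary solution $v$ of the \emph{full} KPP equation starting from a fixed front-like profile $v_0$ (equal to $m_0/2$ on $(-\infty,-1]$, vanishing on $[0,+\infty)$), invokes the spreading result at speed $2$ to find a finite $\tau^*$ with $v(\tau^*,\cdot)>m$ on $(-\infty,0]$, and concludes by comparing $u(\cdot+\tau^*,\cdot)$ with the translate $v(\cdot,\cdot-x^*(t))$. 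You instead build a \emph{compactly supported} generalized sub-solution $g(s)\phi_R(\cdot-x_0)$ from the principal Dirichlet eigenfunction on $[-R,R]$ and a logistic ODE, which raises the value at a single movable point $x_0$ above $m$ after a fixed time $\tau$; this is more elementary and self-contained in that the main step does not need to cite the spreading theorem for the auxiliary solution (it only enters, for both arguments, in the preliminary conversion of the level-set inclusion into a pointwise bound via $\liminf_{x\to-\infty}u(t,x)>m_1$). Both arguments exploit that waiting a uniformly bounded time is enough to upgrade a level $m_1<m_0$ to any target $m<1$, and both absorb that wait into an enlarged constant $C_m$ using $\ln(t-\tau)=\ln t+o(1)$, so they are of comparable strength; your version trades the paper's one-shot translation of a half-line sub-solution for a one-point bump argument that you then slide in $x_0$. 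One small point worth spelling out in a final write-up is the justification of ``$u(t,\cdot)>m_1$ near $-\infty$'' at \emph{every} fixed $t\ge T_1$ (not merely asymptotically), which follows from $\liminf_{x\to-\infty}u_0>0$ and a comparison with the spatially homogeneous logistic flow; the paper implicitly relies on the same fact.
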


\begin{proof}
Let us consider the remaining case $m_0\leq m<1$. Let us denote by $v=v(t,x)$ the solution to \eqref{eq} starting from 
 \begin{equation}\label{initial-data-v}
v_0(x):= \begin{cases}\frac{ m_0}2 &\text{
if } x\leq -1 ,
\\
-\frac{m_0}2 x &\text{ if } -1<x<0 ,\\
0 &\text{ if  }x\geq 0.
\end{cases}
\end{equation}
Since spreading occurs (at speed $c=2$), there is a time $\tau^*=\tau^*_{m_0,m}>0$ such that
\begin{equation}
\label{v-grand}
 v(\tau^*,x)>m, \quad \forall x\leq 0.
 \end{equation}
From~\eqref{conclusion-lower} with~$m= \frac{3}{4}m_0$, we know that there are $C_0>0$ and $T_0>\max \{ 1,\tau^* \}$ such that 
$$
u(t,x) \geq \frac 3 4 m_0, \quad \forall t\geq T_0,\; \forall x\leq x^*(t):=2t-r\ln t -C_0. 
$$
In view of definition \eqref{initial-data-v}, this implies
 $$
 u(t,x)\geq v_0(x-x^*(t)),\quad \forall t\geq T_0,\; \forall x\in \R,
 $$
 so that the comparison principle yields
 $$
 u(t+\tau,x)\geq v(\tau,x-x^*(t)),\quad \forall t\geq T_0,\; \forall \tau \geq 0,\; \forall x\in \R.
 $$
Using~\eqref{v-grand}, we get that
 $$
 u(t+\tau^*,x)>m,\quad \forall t \geq T_0,\; \forall x\leq x^* (t),
 $$
 which proves the result.
 \end{proof}
 
\paragraph{Step 4: the second sub-solution in the advance case $r < 0$.} We look for a sub-solution that is valid for $t >0$ and $x \leq 2t + \sqrt{t}$. In the advance case, the traveling wave cannot be turned into an appropriate sub-solution. Instead, an adequate tool for the construction is the following ODE solution.

\begin{lemma}\label{lem_yet}
Let $\gamma > 1$ be given. Denote $\varphi=\varphi_\gamma$ the solution of the ODE Cauchy problem
$$\left\{
\begin{array}{l}
\varphi '' + 2 \varphi ' + \varphi - \gamma \varphi^2  = 0 \quad \text{ on } (0,+\infty), \vspace{3pt}\\
\varphi  (0) = \frac{1}{2 \gamma},\vspace{3pt}\\
 \varphi ' (0) = 0.
\end{array}
\right.
$$
Then 
$$
\varphi' (z) < 0, \qquad \frac{\varphi '(z)}{\varphi(z)}  \geq -1, \quad \forall z>0.
$$
Moreover, there exists $B=B_\gamma >0$ such that
\begin{equation}\label{asympto-B}
\varphi (z) \sim B  z e^{-z}, \quad \text{ as }  z\to +\infty.
\end{equation}
\end{lemma}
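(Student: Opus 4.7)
The plan is to transform the equation via the substitution $\eta(z) := e^{z}\varphi(z)$, which reduces the nonlinear ODE to the much simpler form
\begin{equation*}
\eta''(z) = \gamma e^{-z}\eta(z)^{2}, \qquad \eta(0) = \eta'(0) = \frac{1}{2\gamma}.
\end{equation*}
As long as $\eta$ remains nonnegative, the right-hand side is nonnegative, so $\eta$ is convex and $\eta'(z) \geq \eta'(0) = \frac{1}{2\gamma} > 0$. This simultaneously shows that $\eta$ is strictly increasing (so in particular $\varphi > 0$ on $[0,+\infty)$) and that $\varphi'(z) + \varphi(z) = e^{-z}\eta'(z) > 0$, which is exactly the second inequality $\varphi'(z)/\varphi(z) > -1$ claimed by the lemma (a strict form of the stated inequality).

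For the strict monotonicity $\varphi'(z) < 0$ on $(0,+\infty)$, I would argue by contradiction. The initial data give $\varphi''(0) = -\varphi(0)(1 - \gamma\varphi(0)) = -\frac{1}{4\gamma} < 0$, so $\varphi' < 0$ just to the right of $0$. If $z_{0} > 0$ were the infimum of points where $\varphi'$ vanishes again, then $\varphi(z_0) < \varphi(0) = \frac{1}{2\gamma} < \frac{1}{\gamma}$ by the monotonicity on $(0,z_{0})$; plugging $\varphi'(z_0)=0$ into the ODE yields $\varphi''(z_0) = \varphi(z_0)(\gamma\varphi(z_0) - 1) < 0$, which contradicts the fact that $z_0$ has to be a local minimum of $\varphi$.

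For the asymptotic $\varphi(z) \sim B z e^{-z}$, I would first establish that $\varphi(z) \to 0$ at $+\infty$: monotonicity gives a limit $\varphi_{\infty} \in [0,\frac{1}{2\gamma})$, while the decreasing energy $E = \frac{(\varphi')^{2}}{2} + \frac{\varphi^{2}}{2} - \frac{\gamma\varphi^{3}}{3}$ (with $E' = -2(\varphi')^{2}$) forces $\varphi' \in L^{2}(0,+\infty)$, hence $\varphi'(z) \to 0$ and then, through the ODE, $\varphi_{\infty} = 0$. Integrating $\eta'' = \gamma e^{-z}\eta^{2}$ twice against the initial data yields the integral representation
\begin{equation*}
\eta(z) = \frac{1+z}{2\gamma} + \gamma\int_{0}^{z}(z-s)\,e^{-s}\eta(s)^{2}\,ds.
\end{equation*}
If one can show that $I := \int_{0}^{+\infty} e^{-s}\eta(s)^{2}\,ds < +\infty$, then $\eta'(z) \to B := \frac{1}{2\gamma} + \gamma I$, and a second integration gives $\eta(z)/z \to B$, i.e.\ $\varphi(z) \sim B z e^{-z}$; moreover the lower bound $\eta(z) \geq (1+z)/(2\gamma)$ already proved ensures $B \geq \frac{1}{2\gamma} > 0$.

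The main obstacle is therefore the convergence of $I$. My plan is a bootstrap argument. The inequality $\gamma\varphi^{2} \leq \varphi/2$ (valid since $\varphi \leq \frac{1}{2\gamma}$) turns the equation into the linear differential inequality $\varphi'' + 2\varphi' + \varphi/2 \leq 0$, which, combined with the boundedness of the energy and the fact that $\varphi \to 0$, yields an initial exponential decay $\varphi(z) \leq Ce^{-\alpha z}$ for some $\alpha > 0$. Translating to $\eta \leq Ce^{(1-\alpha)z}$ and inserting into the integral identity, one either finds $I < +\infty$ directly (when $1-\alpha < 1/2$) or an improved bound of the form $\eta \leq C' e^{(1-2\alpha)z}$; the map $\beta \mapsto 2\beta - 1$ is contracting on $(1/2,1)$, so after finitely many iterations the exponent drops below $1/2$ and the integral converges. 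An alternative (and probably shorter) route is to compare $\varphi$ with appropriate shifts of the critical Fisher--KPP traveling wave of $u'' + 2u' + u - \gamma u^{2} = 0$, whose $Bze^{-z}$ tail is classical, via a sliding argument using ODE uniqueness.
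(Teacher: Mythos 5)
Your proposal is essentially correct and takes a genuinely different, more elementary route than the paper. The paper recasts the ODE as a first-order planar system $p'=q,\ q'=-2q-p+\gamma p^2$ and argues geometrically: the trajectory from $(\tfrac{1}{2\gamma},0)$ is confined to $\{0<p<\tfrac{1}{\gamma},\,q<0\}$ and cannot cross $\{q=-p\}$, giving the sign conditions and $\varphi(+\infty)=0$; the asymptotics $\varphi(z)\sim(Bz+C)e^{-z}$ are then quoted from standard ODE perturbation theory near the degenerate equilibrium, and $B>0$ is obtained by a slightly delicate phase-plane comparison with the minimal travelling wave of $\partial_t u=\partial_{xx}u+u-\gamma u^2$ (exactly the ``alternative route'' you mention at the end). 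Your substitution $\eta=e^z\varphi$, $\eta''=\gamma e^{-z}\eta^2$, combined with the decreasing energy and the double-integrated identity $\eta(z)=\tfrac{1+z}{2\gamma}+\gamma\int_0^z(z-s)e^{-s}\eta(s)^2\,ds$, is self-contained, avoids both the perturbation-theory citation and the travelling-wave comparison, and has the pleasant bonus of giving $B=\tfrac{1}{2\gamma}+\gamma I\ge \tfrac{1}{2\gamma}>0$ automatically.

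Two points deserve attention. First, the map $\beta\mapsto 2\beta-1$ is \emph{not} contracting (its derivative is $2$); what actually makes the bootstrap terminate is that its unique fixed point is $\beta=1$ and the iteration repels away from it, so starting from any $\beta_0<1$ the sequence $\beta_n=1-2^n(1-\beta_0)$ eventually drops below $\tfrac12$ (in fact one iteration from $\alpha=1-1/\sqrt{2}$ already suffices). Second, and more substantively, the ``initial exponential decay'' that seeds the bootstrap is only gestured at: passing from the differential inequality $\varphi''+2\varphi'+\tfrac12\varphi\le 0$ to a bound $\varphi\le Ce^{-\alpha z}$ with $\alpha>0$ requires an argument — e.g.\ factor the operator as $(\partial_z+\mu_1)(\partial_z+\mu_2)$ with $\mu_1=1+\tfrac{1}{\sqrt2}$, $\mu_2=1-\tfrac{1}{\sqrt2}$, integrate $(\varphi'+\mu_2\varphi)'+\mu_1(\varphi'+\mu_2\varphi)\le 0$ once, and then integrate $\varphi'+\mu_2\varphi\le Ce^{-\mu_1 z}$ a second time — and this should be written out, since it is the only place where the decay rate actually becomes strictly positive. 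With these two points repaired, your proof is complete and, arguably, more transparent about the value of $B$ than the paper's.
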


\begin{proof} This is rather standard from the same ODE techniques which have been extensively used to characterize traveling waves of reaction-diffusion equations, but we will sketch the proof for the sake of completeness. 
First, the ODE is recast 
$$\left\{ \begin{array}{l}
p' = q, \\
q' = - 2 q - p + \gamma p^2,\end{array}
 \right.$$
 and we use a phase plane analysis. The trajectory corresponding to $\varphi$ in the $(p,q)$ plane starts from $(\frac{1}{2\gamma},0)$ and immediately enters the subset $\{ 0 < p < \frac{1}{\gamma}, \, q < 0\}$. Then it cannot cross the horizontal segment $\{0<p< \frac{1}{\gamma}, \, q=0\}$, nor the vertical half-line $\{p= \frac{1}{\gamma}, \, q< 0\}$. It also cannot cross the diagonal $\{ q = -p \}$ (if so then, at the first crossing point, we would have $-1\leq \frac{q'}{p'}=\frac{-q+\gamma p^2}{q}=-1+\gamma \frac{p^2}{q}$, which is impossible since $q<0$). It already follows that $\varphi' < 0$ and $\varphi' / \varphi \geq -1$, and that $\varphi (+\infty) = 0$. By standard ODE perturbation theory, we also get that
$$
\varphi (z) \sim (Bz +C) e^{-z}, \quad \text{ as } z\to +\infty,
$$
for some $B\geq 0$. We assume by contradiction that $B=0$. Since the PDE $\partial_t u = \partial_{xx} u + u - \gamma u^2$
is still of the KPP type, it admits a minimal traveling wave $U=U_\gamma$ solving
$$\left\{
\begin{array}{l}
U '' + 2 U ' + U-\gamma U^2=0 \quad \text{ on } \R, \vspace{3pt}\\
U (-\infty) = \frac 1\gamma, \quad U(0)=\frac 1{2\gamma},  \quad U (+\infty)=0,
\end{array}
\right.
$$
which is known to satisfy $U'<0$ and $U(z)\sim  Aze^{-z}$ as $z\to +\infty$, for some $A>0$. Hence~$\varphi$ and~$U$ solve the same ODE and, from the behaviors at $z=0$ and $z\to +\infty$, the trajectories in the phase plane should intersect, which contradicts the Cauchy-Lipschitz theorem. We conclude that $B>0$, and~\eqref{asympto-B} holds.
\end{proof}

Equipped with this, we now claim that we can find $\eta>0$ and $\gamma >1$ such that
\begin{equation}\label{sub_last?}
\underline{u}_2 (t,x) := \left\{ 
\begin{array}{ll}
e^{\eta \frac{z}{\sqrt{t}} } \varphi (z) & \mbox{ if } z >0, \vspace{3pt} \\
  \frac{1}{2 \gamma } & \mbox{ if } z \leq 0 , 
\end{array} \quad \text{ where }  z:=x -2t + r \ln t,
\right.
\end{equation}
is a sub-solution of~\eqref{eq} for $t > 0$, $z \leq 2 \sqrt{t}$. The function $\underline{u}_2$ is clearly a sub-solution for $z < 0$, it is continuous and has the good angle at the gluing point in the sense that
$$
\partial _x \underline u _2(t, (2t-r\ln t)^+)>0=\partial _x \underline u _2(t, (2t-r\ln t)^-), \quad \forall t > 0,
$$
since $\varphi (0) =\frac{1}{2 \gamma}$ and $\varphi ' (0) = 0$. Thus we only need to deal with the differential inequality when $0<z\leq 2\sqrt t$. By using Lemma~\ref{lem_yet} and the ODE satisfied by $\varphi$,  we compute
\begin{eqnarray*}
e^{-\eta \frac{z}{\sqrt t}}\mathcal L \underline u_2 (t,x) &= & e^{-\eta \frac{z}{\sqrt t}}\left(\partial_t \underline {u}_2 - \partial _{xx} \underline u_2 -\underline u_2 +\underline u_2 ^2\right)\\
& = & - \frac{\eta z}{2 t^{3/2}}  \varphi (z)  -  \frac{2 \eta}{\sqrt{t}}  \varphi ' (z) - \frac{\eta^2}{t} \varphi (z) -  \varphi '' (z)\\
& & \hspace{5mm} - \left( 2 - \frac{r}{t} \right) \left( \frac{\eta}{\sqrt{t}} \varphi (z) + \varphi ' (z) \right) -\varphi (z)  + e^{\eta \frac{z}{\sqrt{t}} } \varphi ^2(z) \\
& = &  - \frac{\eta z}{2 t^{3/2}} \varphi (z)  - \frac{2 \eta}{\sqrt{t}}( \varphi ' (z) + \varphi (z)) -  \frac{\eta^2}{t} \varphi (z) + \frac{r}{t} \varphi ' (z)  + \frac{r\eta}{t^{3/2}} \varphi (z)  \\
& & \hspace{5mm}  + \left(  e^{\eta\frac{z}{\sqrt{t}} }- \gamma  \right) \varphi^2 (z).
\end{eqnarray*}
Recalling that $r<0$, $z>0$, $\varphi>0$, $\varphi'<0$ and $\varphi'+\varphi \geq 0$, we get rid of the some nonpositive terms and find that
\begin{eqnarray*}
\frac{e^{-\eta \frac{z}{\sqrt t}}}{\varphi(z)} \mathcal L \underline u_2 (t,x) & \leq & \frac{ -  \eta^2 - r} t    +    \left(  e^{\eta\frac{z}{\sqrt{t}} }  - \gamma \right) \varphi (z).
\end{eqnarray*}
Picking $\eta = \sqrt{-r} >0$ and $\gamma = e^{2 \eta} >1$, and using $z \leq 2 \sqrt{t}$, we get
\begin{eqnarray*}
\frac{e^{-\eta \frac{z}{\sqrt t}}}{\varphi(z)} \mathcal L \underline u_2 (t,x) & \leq &  (e^{2\eta}-\gamma)\varphi(z)=0.
\end{eqnarray*}
As a result, $\underline{u}_2$ is a sub-solution of \eqref{eq} for $t > 0$, $x  \leq 2t - r \ln t + 2\sqrt t$. 

Now, the conclusion is rather similar to that in the delay case. Precisely, we fix $\eta>0$, $\gamma>1$ as above. Recall that $B>0$ is set from \eqref{asympto-B} and $\ep>0$ is set from Lemma \ref{lem:lower_0}. Let $0<\alpha< 1$ be any small enough constant so that 
$$\alpha  e^{\eta} B<\frac \ep 2 . $$
Observe first that $\alpha \underline u _2$ is also  a sub-solution for $t\geq 0$, $x \leq 2t - r \ln t +  2\sqrt  t$. We aim at applying the parabolic comparison principle on $\{(t,x): t\geq T, x\leq 2t+\sqrt t\}$ for some appropriate $T>T_0$, where $T_0$ is defined as in Lemma~\ref{lem:lower_0}. On the lateral boundary $x=2t+\sqrt t$, we have
$$
\alpha \underline{u}_2 (t,2t+\sqrt{t}) =\alpha e^{\eta(1+r\frac{\ln t}{\sqrt t})}\varphi(\sqrt t+r\ln t)\sim  \alpha e^{\eta} B t^{\frac{1}{2}-r} e^{-\sqrt{t}}, \quad \text{ as } t\to +\infty. 
$$
In view of the lower estimate \eqref{eq:lower_0} of the solution $u$ and our choice of $\alpha$, we can find some $T>T_0$ large enough such that
$$
u (t,2t+ \sqrt{t}) \geq \alpha \underline{u}_2 (t, 2t + \sqrt{t}),\quad \forall t\geq T.
$$
Moreover, since spreading occurs (at speed $c=2$), up to increasing $T$ (that we now fix), we have $\inf _{x\leq 0} u(T,x)\geq \frac 12$, and then $\inf _{x\leq 2T+\sqrt T} u(T,x)>0$ thanks to the positivity of $u$. Hence,  by setting further  $0<\alpha<2\gamma e^{-2\eta}\inf _{x\leq 2T+\sqrt T} u(T,x)$, one may check that $u(T,x)\geq \alpha \underline u_2(T,x)$ for all $x\leq 2T+\sqrt T$.

As a result, we deduce from the comparison principle that
\begin{equation}\label{lower_advance_end}
u(t,x)\geq \alpha \underline u_2(t,x), \quad \forall t\geq T,\; \forall x\leq 2t+\sqrt t .
\end{equation}
The end of the proof proceeds as in the delay case. On the one hand, \eqref{conclusion-lower} immediately follows from~\eqref{lower_advance_end} for \lq\lq small'' level sets, and then for all level sets thanks to Lemma \ref{lem:small-level-sets}. On the other hand, \eqref{low estimate front} follows from~\eqref{lower_advance_end} together with the definition of $\underline{u}_2$ in~\eqref{sub_last?} and the asymptotics of~$\varphi$ by Lemma~\ref{lem_yet}. We omit the details. \qed

\section{The critical case $k=-2$}\label{s:critical}

In this section, we prove the following proposition, from which Theorem~\ref{th:critical} will follow.

\begin{proposition}[Level sets in the critical case]\label{had to be done}
Let $u=u(t,x)$ be the solution to \eqref{eq} starting from $u_0$ satisfying Assumption \ref{ass:initial} with $k=-2$. Let $m\in(0,1)$ be given. Then, there are $C_m>0$ and $T_m>0$ such that
\begin{equation}
\label{location-critical-case}
E_m (t) \subset \left(2t - \frac{3}{2} \ln t + \ln \ln t - C_m ,  2t - \frac{3}{2} \ln t +  \ln \ln t +C_m\right), \quad \forall t\geq T_m.
\end{equation}
Moreover, there are $T>0$, $k_1 > k_2 >0$ and $\sigma_1>0$ such that
\begin{equation}\label{critical estimate front}
k_2 y e^{-y} \leq u \Big( t,2t- \frac{3}{2}\ln t + \ln \ln t +y \Big)\le k_1 (y+1) e^{-y},
\end{equation}
for any $t \geq T$ and $0 \leq y \leq \sigma_1 \ln t$.
\end{proposition}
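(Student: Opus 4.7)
The strategy is to adapt the sub- and super-solution constructions of Sections~\ref{s:upper} and~\ref{s:lower} to the critical exponent $k = -2$, $r = 3/2$. By Lemma~\ref{lem:ode-w}(i), the ODE~\eqref{main_ode} now has the explicit solution $w(y) = y e^{-y^2/4}$, whose \emph{gaussian} decay no longer matches the polynomial-exponential tail $A x^{-2}e^{-x}$ of the initial datum, as already noted in the discussion right after~\eqref{compt0super}. The role of the $+\ln\ln t$ correction is precisely to absorb this mismatch: it shifts the moving Dirichlet half-line on which the $w$-based ansatz is compared with $u$, so that the initial comparison becomes feasible on an intermediate zone, while on the far-away zone $u$ is controlled directly via the linearised heat equation.

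For the upper estimate I would work in the moving frame
$$\zeta := x - 2t + \tfrac{3}{2}\ln(t+t_0) - \ln\ln(t+t_0), \qquad t_0 \gg 1,$$
and, in the spirit of Remark~\ref{rem:r'}, take as candidate on $\{\zeta \ge 0\}$
$$\bar u(t,x) := K\Big(1 - \tfrac{M}{\sqrt{t+t_0}}\Big)\, e^{-\zeta}\, \sqrt{t+t_0}\, w\!\left(\frac{\zeta}{\sqrt{t+t_0}}\right),$$
extended by $1$ to the left to produce a generalised super-solution. Redoing the PDE computation of Step~1 in Subsection~\ref{ss:proof-upper}, the derivative of the added $-\ln\ln(t+t_0)$ contributes the extra term
$$\frac{e^{-\zeta}}{\sqrt{t+t_0}\,\ln(t+t_0)}\, w\!\left(\frac{\zeta}{\sqrt{t+t_0}}\right) \;-\; \frac{e^{-\zeta}}{(t+t_0)\ln(t+t_0)}\, w'\!\left(\frac{\zeta}{\sqrt{t+t_0}}\right),$$
whose leading contribution is \emph{positive} and, together with the $(M/2)\,w/(t+t_0)$ piece from the prefactor, dominates the harmful $(r'/(t+t_0))\,w'$ remainder on the range $\zeta \le \sigma\sqrt{t+t_0}\ln(t+t_0)$ for $t_0$ large. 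Since $w$ decays gaussianly, $\bar u(0,\cdot)$ cannot dominate $u_0$ for $\zeta$ of order $\sqrt{t_0}$; I would therefore complement $\bar u$ on the far-away zone by the super-solution coming from $\partial_t v = \partial_{xx} v + v$ with initial datum $u_0$, the critical analogue of Lemma~\ref{lem:heat+v}. Repeating its heat-kernel computation with $k=-2$ yields, along $x = 2t + c\sqrt{t}$,
$$v(t, 2t + c\sqrt{t}) \le c_0\, t^{-1}\, e^{-c\sqrt{t}},$$
and the $+\ln\ln t$ shift is exactly what makes the two pieces compatible at the moving interface; the parabolic comparison principle then gives the right-hand inclusion in~\eqref{location-critical-case} and the upper bound in~\eqref{critical estimate front}.

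For the lower estimate I would mirror the two-step scheme of Section~\ref{s:lower}. First, adapting Step~1 of Subsection~\ref{sec: proof}, a sub-solution of the form
$$\underline u(t,x) := \varepsilon\Big(1 + \tfrac{M}{\sqrt{t}}\Big)\, e^{-z}\, t^{-3/2}\ln t \cdot w\!\left(\frac{z}{\sqrt{t}}\right), \quad z = x - 2t - \tfrac{1}{2}\ln t - \ln\ln t,$$
built with $r' = r - 2 = -1/2$ and the additional $-\ln\ln t$ correction (which again produces a favourable sign in $\mathcal L\underline u$), can be shown to satisfy $\mathcal L\underline u \le 0$ for $t$ large and to lie below $u_0$ at $t=t_0$ once $\varepsilon$ is small; it delivers a lower bound $u(t, 2t + \sqrt{t}) \ge \varepsilon e^{-\sqrt{t}}\,t^{-1}\ln t$, the critical analogue of Lemma~\ref{lem:lower_0}. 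Second, since $r = 3/2 > 0$ we are in a delay-type regime, so the shifted minimal wave
$$\alpha\,U\!\left(x - 2t + \tfrac{3}{2}\ln(1+t) - \ln\ln(2+t)\right)$$
is a sub-solution on $\{x \le 2t + \sqrt{t}\}$ for $t$ large: using $U''+2U'+U(1-U)=0$, one reduces $\mathcal L$ of this object to $U'\cdot \big(\tfrac{3}{2(1+t)} - \tfrac{1}{(2+t)\ln(2+t)}\big)$, which has the right sign thanks to $U' < 0$ and the positivity of the bracket for $t$ large. Gluing the two sub-solutions along the lateral boundary $x=2t+\sqrt{t}$, and upgrading from small level sets to all level sets via Lemma~\ref{lem:small-level-sets}, yields the left-hand inclusion in~\eqref{location-critical-case} and the lower bound in~\eqref{critical estimate front}.

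The main obstacle I expect is the initial-data matching for the super-solution: the gaussian tail of the critical $w$ forces one to patch a $w$-based super-solution on the intermediate zone with a heat-kernel super-solution on the far-away zone, and to check that the moving interface can be chosen so that (i)~both pieces control $u_0$ with tail $A x^{-2}e^{-x}$, and (ii)~the resulting function remains a generalised super-solution across the interface. It is precisely the size $\ln \ln t$ of the shift that makes both conditions compatible: any smaller shift breaks the initial comparison through the gaussian factor $e^{-\zeta^2/4(t+t_0)}$, while any larger one causes the $-\ln\ln$ derivative term to become too weak to dominate the harmful $(r'/t)\,w'$ remainder in the differential inequality.
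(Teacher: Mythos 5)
Your overall plan — work in a moving Dirichlet frame with the extra $+\ln\ln t$ shift, combine a sub-/super-solution built from the formal ansatz with a linearized-equation bound in the far-away zone, and close with the slowed-down minimal wave and Lemma~\ref{lem:small-level-sets} — is in the right spirit, but it contains a decisive gap in the far-away control.

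You assert that the free linearized equation $\partial_t v=\partial_{xx}v+v$ started from $u_0$ yields $v(t,2t+c\sqrt{t})\le c_0\,t^{-1}e^{-c\sqrt{t}}$ when $k=-2$, by plugging $k=-2$ into the conclusion of Lemma~\ref{lem:heat+v}. This is not correct: the heat-kernel computation in the proof of Lemma~\ref{lem:heat+v} breaks precisely at $k=-1$, as the authors themselves note, because after the change of variable the integral $\int_{-c/2+1/(2\sqrt t)}^{\infty}(2z+c)^{k}e^{-z^2}\,dz$ is no longer bounded uniformly in $t$. For $k=-2$ it diverges like $\sqrt{t}$, so the free linearized equation only gives $v(t,2t+c\sqrt{t})=\mathcal{O}(t^{-1/2}e^{-c\sqrt{t}})$. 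This is far weaker than the sharp behavior $\sim t^{-1}\ln t\,e^{-c\sqrt{t}}$ that the solution actually has (and which your intermediate-zone Gaussian-$w$ super-solution would be seeing at its lateral boundary). Concretely, the patching you propose requires the far-away bound to lie \emph{below} the intermediate super-solution at the moving interface, i.e.\ $t^{-1/2}\lesssim t^{-1}\ln t$, which fails for large $t$; so the generalized super-solution cannot be assembled, and neither the correct power of $\ln t$ nor the subsequent $\ln\ln t$ correction emerges from this route. The same deficiency infects your lower bound: the ansatz $\varepsilon\,e^{-z}\,t^{-3/2}\ln t\,w(z/\sqrt t)$ with the Gaussian $w$ evaluates at $x=2t+2\sqrt t$ to order $t^{-1}(\ln t)^{2}\,e^{-2\sqrt t}$ — one $\ln t$ too many compared with the actual solution, so it would eventually exceed $u$ and cannot be an admissible sub-solution.

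The paper's proof handles this differently and, I think, unavoidably so: rather than patching the Gaussian-$w$ ansatz against the free linearized equation, it replaces the ansatz altogether with $e^{-(x-2t)}\,v(t,x-2t)$, where $v$ is the solution of the \emph{half-line heat equation with Dirichlet condition at $0$} started from $v_0(x)=\min(1,x^{-2})$. The Dirichlet reflection provides exactly the cancellation that removes the $t^{-1/2}$ divergence and produces the crucial $\frac{\ln t}{t}$ asymptotic in~\eqref{equiv-v-loin}, from which the $\ln\ln t$ correction follows directly. The auxiliary Lemmas~\ref{lem:critical_sub}--\ref{lem:postponed} then quantify the decay and logarithmic structure of this $v$ so that suitable multiplicative perturbations become genuine sub- and super-solutions of the KPP equation. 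If you want to salvage your approach, the fix is to prove a sharp analogue of Lemma~\ref{lem:heat+v} at $k=-2$ using the Dirichlet (odd-reflection) kernel rather than the free kernel; once you have $v(t,2t+c\sqrt t)\asymp t^{-1}\ln t\,e^{-c\sqrt t}$, the rest of your scheme can be made to work and essentially reproduces the paper's argument.
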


\paragraph{Step 1. Preliminary computations.} Let us first consider $v=v(t,x)$ the solution to the heat equation on the half-line with Dirichlet boundary condition, namely
$$
\left\{ \begin{array}{ll}
\partial_t v = \partial_{xx} v, &  \quad t>0, \ x >0, \vspace{3pt}\\
v (t,0) = 0, & \quad t>0 ,
\end{array}
\right.
$$
starting from
\begin{equation}\label{v-zero}
v _0(x) = 
\left\{
\begin{array}{ll}
1 & \mbox{ if } 0<x \leq 1 , \vspace{3pt}\\
\frac{1}{x^2} & \mbox{ if } x > 1.
\end{array}
\right.
\end{equation}
In particular,the function $e^{-(x-2t)} v (t,x-2t)$ solves the linearized equation $\partial_t u = \partial_{xx} u + u$ on a right half-line with a moving Dirichlet condition at $x = 2t$, and its initial value is also consistent with Assumption~\ref{ass:initial} with $k =-2$. Therefore, it serves as a potential candidate to accurately determine the position of the level sets.

First, we know that
\begin{eqnarray*}
v (t,x) &=& \frac{1}{\sqrt{4 \pi t}} \int_0^{+\infty} \left( e^{-\frac{(x-y)^2}{4t}} - e^{-\frac{(x+y)^2}{4t}} \right) v_0 (y) dy \\
&=& \frac{e^{-\frac{x^2}{4t}} }{\sqrt{4 \pi t}} \int_0^{+\infty} e^{-\frac{y^2}{4t} } \left( e^{\frac{yx}{2t}} - e^{-\frac{yx}{2t} } \right) v_0 (y) dy,
\end{eqnarray*}
so that
\begin{eqnarray*}
v (t, 2 \sqrt{t}) & = & \frac{e^{-1}}{\sqrt{4 \pi t}} \int_0^{+\infty}  e^{-\frac{y^2}{4t} } \left( e^{\frac{y}{\sqrt{t}}} - e^{-\frac{y}{\sqrt{t}} } \right) v_0 (y) dy \\
& = & \frac{e^{-1}}{\sqrt{4 \pi}} \int_0^{+\infty}  e^{-\frac{y^2}{4} } \left( e^{y} - e^{-y} \right) v_0 (\sqrt{t}y) dy \\
& = & \frac{e^{-1}}{\sqrt{4 \pi}} \int_0^{1/ \sqrt{t}} e^{-\frac{y^2}{4} } \left( e^{y} - e^{-y} \right)  dy + \frac{e^{-1}}{\sqrt{4 \pi}} \int_{1/ \sqrt{t}}^{+\infty}  e^{-\frac{y^2}{4} } \left( e^{y} - e^{-y} \right) \frac{1}{t y^2} dy \\
& =: & I_1 + I_2 .
\end{eqnarray*}
On the one hand, as $t \to +\infty$,
$$
I_1  \sim  \frac{e^{-1}}{\sqrt{ \pi}} \int_0^{1/ \sqrt{t}}   y   dy   =   \mathcal{O} \left( \frac{1}{t} \right).
$$
On the other hand, due to the integrand in $I_2$ being integrable at $+\infty$, but not at $0$, we have, as $t\to +\infty$, 
$$
I_2  \sim \frac{e^{-1}}{\sqrt{4 \pi}} \int_{1/ \sqrt{t}}^1  e^{-\frac{y^2}{4} } \left( e^{y} - e^{-y} \right) \frac{1}{t y^2} dy  \sim   \frac{e^{-1}}{\sqrt{\pi}} \int_{1/ \sqrt{t}}^1   \frac{1}{t y} dy  \sim    \frac{e^{-1}}{\sqrt{\pi}} \frac{\ln t }{2 t}.
$$
Therefore 
\begin{equation}\label{equiv-v-loin}
v (t, 2\sqrt{t}) \sim  \frac{e^{-1}}{\sqrt{\pi}} \frac{\ln t }{2 t},\quad \text{ as } t \to +\infty.
\end{equation}

Let us conclude this step by a formal discussion. If the ansatz $e^{-(x-2t)}v(t,x-2t)$ is accurate, we may expect $u (t, 2t + 2 \sqrt{t})$ to be well approximated by
$$e^{-2 \sqrt{t}} v(t,2 \sqrt{t}) \sim \frac{e^{-1}}{\sqrt \pi} e^{-2\sqrt{t}}\, \frac{\ln t}{2t},\quad \text{ as } t \to +\infty.$$
On the other hand, we have that
\begin{eqnarray*}
U \left(2t + 2\sqrt{t} - \Big( 2t - \frac{3}{2}\ln t + \ln \ln t \Big) \right) & = &
U \left( 2 \sqrt{t}+ \frac{3}{2} \ln t - \ln \ln t \right) \\
&  \sim & B  \times \left(2 \sqrt{t} + \frac{3}{2}\ln t - \ln \ln t \right) e^{-2 \sqrt{t} - \frac{3}{2}\ln t + \ln \ln t} \\
& \sim & 2 B e^{-2 \sqrt{t}} \, \frac{\ln t}{t}, \quad \text{ as } t \to +\infty.
\end{eqnarray*}
Observing that both estimates coincide, this suggests that the position of the front of the solution~$u$ should be around $2 t - \frac{3}{2} \ln t + \ln \ln t$, in agreement with our Theorem \ref{th:critical}.

\paragraph{Step 2. Lower estimate.} The rough plan is to first perturb the function 
$v$ defined in the previous step to make it a sub-solution, thereby obtaining a lower bound on the solution at $x=2t + 2 \sqrt{t}$. Subsequently, the lower estimate on the level sets will follow similarly to the noncritical delay case, thanks to the fact that a slowed-down minimal front also serves as a sub-solution.

Precisely, we consider
$$
\underline{u} (t,x) =  \delta \times \left( 1 + \frac{M}{(t+1)^{1/4}} \right) \times e^{-(x-2t)} v(t, x -2t), \quad t\geq 0, x\geq 2t,
$$
where $\delta>0$ and $M>0$ will be specified below, chosen to be respectively small and large enough. Notice that we multiplied the ansatz $e^{-(x-2t)} v(t,x-2t)$ by a decreasing in time function. As in the previous section, provided that~$v$ decays to $0$ fast enough, the resulting additional term when evaluating the KPP equation will absorb the nonlinear quadratic term that our computation in Step 1  had previously ignored.

First, by Assumption~\ref{ass:initial} (with $k=-2$) and our choice of $v (0,\cdot)$, it is straightforward to check that, for any $M>0$, there exists $\delta >0$ small enough so that
$$
\underline{u} (0, x )  \leq u_0 (x), \quad \forall x\geq 0.
$$
By the strong maximum principle, we also have that
$$\underline{u} (t,2t)=0<u(t,2t), \quad\forall  t>0.$$
Next, let us show that~$\underline{u}$ is a sub-solution. We  compute, for any $t>0$ and $x > 2t$,
\begin{eqnarray*}
&& \delta^{-1} e^{x-2t} \times \left( \partial_t \underline{u} - \partial_{xx} \underline{u} - \underline{u} + \underline{u}^2 \right) \\
&& \qquad =   v(t,x-2t) \times \left[ \delta \left(1+ \frac{M}{(t+1)^{1/4}}\right)^2 e^{-(x-2t)} v(t,x-2t) - \frac{M}{4 (t+1)^{5/4}}   \right]\\
&& \qquad \leq   v(t,x-2t) \times  \left[ e^{-(x-2t)}  v(t,x-2t) - \frac{M}{4 (t+1)^{5/4}}  \right],
\end{eqnarray*} 
assuming further $\delta(1+M)^2<1$. From the following lemma, whose proof is postponed, we can choose $M>0$ sufficiently large (and then $\delta>0$ sufficiently small) to ensure that $\underline u$ is a sub-solution.

\begin{lemma}\label{lem:critical_sub}
For any $\ep>0$, the function~$v$ defined in Step 1 satisfies 
$$
\sup_{ t >0,\, x >0} e^{-x} v (t,x) \times (t+1)^{\frac 3 2 -\ep} < + \infty.
$$
\end{lemma}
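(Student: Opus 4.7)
The plan is to exploit the explicit Dirichlet heat kernel representation
\begin{equation*}
v(t,x) = \frac{1}{\sqrt{4\pi t}} \int_0^{+\infty} \bigl( e^{-(x-y)^2/(4t)} - e^{-(x+y)^2/(4t)} \bigr)\, v_0(y)\,dy.
\end{equation*}
For $t \leq 1$, the parabolic maximum principle gives $v(t,x) \leq \|v_0\|_\infty = 1$, so $e^{-x} v(t,x) (t+1)^{3/2-\varepsilon}$ is trivially bounded on this range; hence we focus on $t \geq 1$.

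The key observation is the algebraic identity combined with $1 - e^{-a} \leq a$ for $a \geq 0$:
\begin{equation*}
e^{-(x-y)^2/(4t)} - e^{-(x+y)^2/(4t)} = e^{-(x-y)^2/(4t)}\bigl(1 - e^{-xy/t}\bigr) \leq \frac{xy}{t}\, e^{-(y-x)^2/(4t)},
\end{equation*}
which yields
\begin{equation*}
v(t,x) \leq \frac{x}{2\sqrt{\pi}\, t^{3/2}}\, I(t,x), \qquad I(t,x) := \int_0^{+\infty} y\, v_0(y)\, e^{-(y-x)^2/(4t)}\, dy.
\end{equation*}
The prefactor already exhibits the desired $t^{-3/2}$ decay, and to conclude it suffices to establish the essentially logarithmic bound
\begin{equation*}
I(t,x) \leq C \bigl(1 + \ln(1+t) + \ln(1+x)\bigr), \quad t \geq 1,\ x > 0.
\end{equation*}
Combined with the boundedness of the map $x \mapsto x e^{-x}(1 + \ln(1+x))$ on $[0,+\infty)$, this gives $e^{-x} v(t,x) \leq C(1 + \ln(1+t))/t^{3/2}$, from which the lemma follows via $\ln(1+t) \leq C_\varepsilon (1+t)^\varepsilon$.

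To prove the bound on $I(t,x)$, I would split at $y = 1$. Since $y v_0(y) = y \leq 1$ on $(0,1]$, this contribution is at most $1/2$. On $(1,+\infty)$, where $y v_0(y) = 1/y$, I would further split according to whether $|y - x| \leq 2\sqrt{t}$. On the near region $[\max(1, x-2\sqrt{t}),\, x+2\sqrt{t}]$, bounding the Gaussian by $1$ and integrating $1/y$ explicitly produces the logarithm $\ln\bigl((x+2\sqrt{t})/\max(1, x-2\sqrt{t})\bigr) \leq C(\ln(1+t) + \ln(1+x))$. On the far region $y \geq x + 2\sqrt{t}$, using $1/y \leq 1/(x + 2\sqrt{t})$ and the Gaussian tail $\int e^{-(y-x)^2/(4t)}\,dy \lesssim \sqrt{t}$ gives a bounded contribution. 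On the far region $1 \leq y \leq x-2\sqrt{t}$ (when nonempty), a split at $y = x/2$ together with the pointwise bound $e^{-(y-x)^2/(4t)} \leq e^{-x^2/(16t)}$ for $y \leq x/2$ produces $O(1) + e^{-x^2/(16t)}\ln x = O(1)$.

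The main obstacle is precisely this last step: propagating the heavy $1/y^2$ tail of $v_0$ through the Dirichlet heat kernel and obtaining a logarithmic, rather than polynomial-in-$\sqrt{t}$, overshoot. This borderline behavior, namely that the weight $y v_0(y) = 1/y$ emerging from the kernel inequality fails to be integrable only by a logarithm, is exactly what produces the extra $\ln \ln t$ correction in the critical case of Theorem~\ref{th:critical}.
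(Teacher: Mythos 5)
Your proof is correct and takes a genuinely different route from the paper's. The paper's proof of Lemma~\ref{lem:critical_sub} recycles the super-solution machinery already built in Step~1 of subsection~\ref{ss:proof-upper}: it notes that $e^{-(x-2t)}v(t,x-2t)$ solves the linear equation $\partial_t u = \partial_{xx}u + u$, dominates its initial datum by $x^k e^{-x}$, and then invokes the family of super-solutions indexed by $r = \tfrac{1-k}{2} < \tfrac{3}{2}$, letting $r\to\tfrac32$. Your argument is instead a self-contained Gaussian estimate: you factor the Dirichlet kernel via $(x+y)^2 - (x-y)^2 = 4xy$, bound $1-e^{-xy/t}\le xy/t$ to extract the $t^{-3/2}$ prefactor, and then control the resulting convolution $I(t,x)=\int_0^\infty y\,v_0(y)e^{-(y-x)^2/(4t)}\,dy$ by a region split depending on $|y-x|\lessgtr 2\sqrt t$; the logarithmic overshoot comes from the near-diagonal piece where $1/y$ integrates to a log. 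This is more elementary and, in fact, produces the sharper bound $e^{-x}v(t,x)\lesssim (\ln t)/t^{3/2}$, whereas the paper only extracts $t^{-3/2+\varepsilon}$; either suffices for the stated lemma. One minor slip: in your treatment of the interval $1\le y\le x/2$ you assert $e^{-x^2/(16t)}\ln x = O(1)$, which is false on the relevant range (take $x$ a fixed multiple of $\sqrt t$); it is only $O(\ln t + \ln x)$. But this matches your intermediate target $I(t,x)\le C(1+\ln(1+t)+\ln(1+x))$ anyway, so the conclusion is unaffected.
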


Hence, by the parabolic comparison principle, we  have that
$$
u(t,x) \geq \underline{u} (t,x)  = \delta  \left( 1 + \frac{M}{(t+1)^{1/4}}\right) e^{-(x-2t)} v(t, x -2t), \quad \text{for all } t \geq 0, x \geq 2t.
$$
As a result, we deduce from \eqref{equiv-v-loin} that  that there exists $\varepsilon >0$ such that
\begin{equation}\label{u-loin}
u(t,2t+2 \sqrt{t}) \geq \varepsilon e^{-2 \sqrt{t}} \times \frac{\ln t}{ t}, \quad \forall t>1.
\end{equation}

The rest of the proof proceeds similarly to the noncritical case, see Step~3 of subsection~\ref{sec: proof} for further details. Consider the second sub-solution (as can be easily verified)
$$\underline{u}_2 (t,x) := \alpha U \Big( x-2t + \frac{3}{2} \ln t - \ln \ln t \Big),$$
where $\alpha \in (0,1) $ and $U$ denotes the minimal travelling wave. Recalling $U(z)\sim Bze^{-z}$ as $z\to +\infty$, we have
$$
\underline{u}_2(t,2t+2\sqrt t) \sim 2\alpha B e^{-2\sqrt t}\times \frac{\ln t}{t}, \quad \text{ as } t\to +\infty.
$$
In view of \eqref{u-loin}, we may thus choose $\alpha$  small enough so that, for some large enough $T>0$,
$$
\underline{u}_2 (t,2t + 2 \sqrt{t})  \leq u (t, 2t + 2 \sqrt{t}),\quad \forall t\geq T.
$$
 Up to further reducing $\alpha$, we may also have
$$\underline{u}_2 (T, x) \leq u (T,x),$$
for $x \leq 2T + 2 \sqrt{T}$. Finally, by another application of the parabolic comparison principle, we have that
\begin{equation}\label{eq:subsol_almostdone}
\underline{u}_2 (t,x) \leq u (t,x), \quad \text{ for any $t \geq T$ and $x \leq 2t + 2 \sqrt{t}$}.
\end{equation}

We can now conclude that there exist $C_m, T_m >0$
$$E_m (t) \subset \left( 2t - \frac{3}{2} \ln t + \ln \ln t - C_m , +\infty \right) , \text{ for all } t\geq T_m,$$
first for small $0 < m < \frac{\alpha}{2}$, then for all $0 < m < 1$ by Lemma~\ref{lem:small-level-sets}. Let us point out that this already proves the drift is not exactly logarithmic. 

It also follows from \eqref{eq:subsol_almostdone} and the spatial asymptotics of $U$ (hence, of $\underline{u}_2$) that there exists $k_2 >0$ such that
\begin{equation*}%\label{low estimate front critical}
u \left(t, 2t - \frac{3}{2}\ln t + \ln \ln t + y\right) \geq k_2 y e^{-y},
\end{equation*}
for all $t \geq T$ and $0 \leq y \leq \sqrt{t}$. In particular, we have proved the first inequality in~\eqref{critical estimate front}.

\paragraph{Step 3. Upper estimate.} Let us consider
$$\widehat{u} (t,x) := \left( 1 - \frac{M}{t^{1/4}} \right) \times \frac{t^{3/2}}{\ln t} e^{-(x-2t + \frac{3}{2} \ln t - \ln \ln t)}  v\left(t,x - 2 t + \frac{3}{2} \ln t - \ln \ln t\right),$$
where $M>0$ is sufficiently large (to be specified later), and show that it is a positive super-solution for $t$ large enough and $x > 2t - \frac{3}{2} \ln t + \ln \ln t$.

Denoting $z = x - 2t + \frac{3}{2} \ln t - \ln \ln t$ for convenience, it is straightforward to  compute
\begin{eqnarray*}
 \left( 1 - \frac{M}{t^{1/4}} \right)^{-1} \times \frac{e^{z} \ln t}{t^{3/2}} \times ( \partial_t  \widehat{u} - \partial_{xx} \widehat{u} - \widehat{u} ) 
 = \left( \frac{3}{2t} - \frac{1}{t \ln t} \right) \partial_x v  + \frac{M}{4 t^{5/4}} \left( 1 - \frac{M}{t^{1/4}} \right)^{-1} v (t,z) ,
\end{eqnarray*}
for $t >0$ and $z >0$. From the following lemma, whose proof is postponed, we may choose $t_0>0$ and $M>0$ sufficiently large to make $\widehat u$ a (positive) super-solution of the KPP equation on the moving right half-line $t>t_0$, $x>2t-\frac 3 2 \ln t+\ln \ln t$.

\begin{lemma}\label{lem:critical_super}
There are $t_0>0$ and $C>0$ such that the function $v$ defined in Step 1 satisfies
$$
\frac{\partial_x v(t,x) }{v(t,x)} \geq - \frac{C}{t^{1/4}},\quad  \text{ for all } t>t_0, \ x >0.
$$
\end{lemma}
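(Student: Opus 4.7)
The plan is to exploit the explicit representation of $v$ via the method of images and then split the half-line into two regimes according to the size of $x$ relative to $t^{3/4}$, treating each by a different argument. Writing $G_t(z) := e^{-z^2/(4t)}/\sqrt{4\pi t}$ for the one-dimensional heat kernel, we have
\begin{equation*}
v(t,x) = \int_0^\infty \bigl( G_t(x-y) - G_t(x+y) \bigr)\, v_0(y) \, dy,
\end{equation*}
and differentiating in $x$ yields the identity
\begin{equation*}
\partial_x v(t,x) = \frac{1}{2t}\bigl( \tilde v(t,x) - x\, v(t,x) \bigr),\qquad \tilde v(t,x) := \int_0^\infty y \bigl(G_t(x-y) + G_t(x+y)\bigr)\, v_0(y)\, dy \geq 0.
\end{equation*}

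In the intermediate regime $0 < x \leq 2 t^{3/4}$, the identity together with the nonnegativity of $\tilde v$ gives immediately $\partial_x v/v \geq -x/(2t) \geq -t^{-1/4}$, and there is nothing more to do.

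In the far regime $x > 2 t^{3/4}$, which forces $x \gg \sqrt t$ for $t$ large, the plan is to bound $|\partial_x v|$ and $v$ separately using the particular form of $v_0$, aiming at $|\partial_x v(t,x)| \leq C/x^3$ and $v(t,x) \geq c/x^2$, so that $|\partial_x v|/v \leq C'/x \leq C'/(2 t^{3/4})$, which is much stronger than needed. For the upper bound on $|\partial_x v|$, I would integrate by parts once in $y$: since $\partial_x G_t(x \mp y) = \mp\, \partial_y G_t(x \mp y)$, the derivative is transferred onto $v_0'(y) = -2 y^{-3} \mathbf{1}_{y > 1}$, leaving the boundary term $2 G_t(x)$ that is exponentially small for $x \geq 2 t^{3/4}$. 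The main contribution to the resulting integral then comes from $y \in [x/2, 3x/2]$, where $1/y^3 \leq 8/x^3$ and the Gaussian integrates to at most $\sqrt{4\pi t}$, cancelling the prefactor $1/\sqrt{4\pi t}$. For the lower bound on $v$, I would restrict the representation integral to $y \in [x - \sqrt t, x + \sqrt t]$, a subset of $(1, +\infty)$ for $t$ large; there $v_0(y) \geq 1/(2x)^2$, $G_t(x-y) \geq e^{-1/4}/\sqrt{4\pi t}$, and $G_t(x+y)$ is negligible because $(x+y)^2/(4t) \gtrsim \sqrt t$, so integration over a length $2\sqrt t$ yields $v(t,x) \geq c/x^2$.

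The main technical obstacle is to control the contributions to $\partial_x v$ coming from the region where $y$ is far from $x$ after the integration by parts: these are dominated by Gaussians of type $e^{-x^2/(16 t)}$, which must still be small after division by $v$ of order $1/x^2$. The crucial observation is that the map $u \mapsto u e^{-u}$ is decreasing for $u > 1$, so that $x^2 e^{-x^2/(16 t)} \leq 4 t^{3/2} e^{-\sqrt t/4}$ uniformly for $x \geq 2 t^{3/4}$ and $t$ large enough; the resulting correction tends to zero as $t \to +\infty$ and can be absorbed into the constant $C$ upon choosing $t_0$ sufficiently large.
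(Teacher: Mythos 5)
Your argument is correct, and it follows the same overall skeleton as the paper's: split at $x = 2t^{3/4}$, use the identity $\partial_x v = \tfrac{1}{2t}(\tilde v - x v)$ with $\tilde v \geq 0$ in the near regime, and combine a lower bound $v(t,x) \gtrsim x^{-2}$ (obtained in both arguments by restricting the image-kernel representation to $y \in [x-\sqrt{t}, x+\sqrt{t}]$) with an upper bound on $|\partial_x v|$ in the far regime. The genuine difference is in how the far-regime upper bound on $|\partial_x v|$ is obtained. You integrate by parts in $y$, dumping the derivative onto $v_0'(y) = -2y^{-3}\mathbf{1}_{y>1}$ and producing a favorable boundary term $2G_t(x)$; this eliminates the awkward $\tfrac{x-y}{2t}$ factor and reduces the estimate to a single Gaussian tail bound plus the $y \approx x$ window where $y^{-3} \leq 8x^{-3}$ and the Gaussian integrates to $1$. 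The paper instead keeps the factor $\tfrac{x-y}{2t} e^{-(x-y)^2/4t}$ and performs a three-way decomposition (near $y=0$, near $y=x$ in a window of width $xt^{-1/8}$, and the remainder), with separate Gaussian estimates on each piece. Your integration-by-parts route is cleaner and somewhat more structural, since it exploits the explicit form of $v_0'$; the paper's direct split has the minor advantage of not requiring that $v_0$ be absolutely continuous, but for this specific $v_0$ both are straightforward. The small-correction bookkeeping (exponential terms like $x^2 e^{-x^2/16t}$ that must be dominated after dividing by $v \sim c/x^2$) you handle via the monotonicity of $u \mapsto u e^{-u}$, which is essentially the same device the paper uses to control $\frac{x^3}{\sqrt{t}} I_1$ and $\frac{x^3}{\sqrt{t}} I_3$.
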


We now sketch the end of the proof of the upper estimate. First, for any $K>1$, we define
$$\overline{u} (t,x)  = \left\{
\begin{array}{ll}
1 & \ \mbox{ if }\ x < 2(t+t_0) -\frac{3}{2} \ln (t+t_0) + \ln \ln (t+t_0) + 1 , \vspace{3pt} \\
\min \{1, K \widehat{u} (t+t_0, x ) \} & \ \mbox{ if } \ x \geq 2(t+t_0) -\frac{3}{2} \ln (t+t_0) +  \ln \ln (t+t_0) + 1 .
\end{array}
\right.
$$
We have that
\begin{eqnarray*}
 && K \widehat{u} \left( t+t_0, 2 ( t +t_0) - \frac{3}{2} \ln (t+t_0) +  \ln \ln (t+t_0) + 1  \right) \\
& &\quad =  K \left( 1 - \frac{M}{(t+t_0)^{1/4}} \right) \times \frac{(t+t_0)^{3/2}}{\ln (t+t_0)} e^{- 1}  v(t+t_0, 1)
\end{eqnarray*}
which, up to increasing $t_0$ and $K$, is strictly larger than $1$
for all $t>0$ thanks to the following lemma, whose proof is also postponed. As a result, $\overline{u}$ is a generalized super-solution of the KPP equation (for all $t>0$ and $x\in\R$), for any choice of $K$ large enough.

\begin{lemma}\label{lem:postponed}
There are $C_2 > C_1 >0$ and $T>e$ such that the function $v$ defined in Step 1 satisfies, for any $t \geq T$ and $x \in (1, \ln t)$, 
$$
C_1\frac{x \ln t}{t^{3/2}} \leq v(t,x) \leq C_2\frac{ x\ln t }{t^{3/2}}.
$$
\end{lemma}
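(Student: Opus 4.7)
The plan is to use the method of images representation of $v$, namely
$$v(t,x) = \frac{e^{-x^2/(4t)}}{\sqrt{\pi t}} \int_0^{+\infty} e^{-y^2/(4t)} \sinh\left(\frac{xy}{2t}\right) v_0(y)\, dy,$$
and to show that, uniformly for $x \in (1,\ln t)$ with $t$ large, the integral is comparable to $\frac{x \ln t}{4 t}$. First, since $x^2/(4t) \leq (\ln t)^2/(4t) = o(1)$, the Gaussian prefactor $e^{-x^2/(4t)}$ tends to $1$ uniformly and contributes only bounded multiplicative constants.

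The decisive contribution comes from the range $y \in (1,\sqrt{t})$, where $v_0(y) = 1/y^2$. In this range, $xy/(2t) \leq x/(2\sqrt{t}) \leq (\ln t)/(2\sqrt{t})$ is small, so for $t$ large one has the two-sided inequality $\frac{xy}{2t} \leq \sinh\!\left(\frac{xy}{2t}\right) \leq \frac{xy}{t}$, yielding
$$\frac{x}{2t}\int_1^{\sqrt{t}} \frac{e^{-y^2/(4t)}}{y}\,dy \;\leq\; \int_1^{\sqrt{t}} e^{-y^2/(4t)} \sinh\!\left(\frac{xy}{2t}\right) \frac{dy}{y^2} \;\leq\; \frac{x}{t}\int_1^{\sqrt{t}} \frac{e^{-y^2/(4t)}}{y}\,dy.$$
Since $e^{-y^2/(4t)} \in [e^{-1/4},1]$ on $[1,\sqrt{t}]$, the remaining integral is $\frac{\ln t}{2}(1+o(1))$, which gives the main term of order $\frac{x \ln t}{t}$.

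I then have to control the contributions from the two remaining integration ranges. For $y \in (0,1)$, the bound $\sinh(xy/(2t)) \leq xy/t$ combined with $v_0 = 1$ gives a contribution of order $O(x/t)$. For $y \geq \sqrt{t}$, the naive estimate $\sinh(z) \leq e^z/2$ is too weak; instead I use the sharp bound $\sinh(z) \leq z\, e^z$, which yields
$$\int_{\sqrt{t}}^{+\infty} e^{-y^2/(4t)} \sinh\!\left(\frac{xy}{2t}\right) \frac{dy}{y^2} \;\leq\; \frac{x}{2t}\, e^{x^2/(4t)} \int_{\sqrt{t}}^{+\infty} e^{-(y-x)^2/(4t)} \frac{dy}{y}.$$
Using $1/y \leq 1/\sqrt{t}$ on this range, together with the Gaussian estimate $\int_{\sqrt{t}}^{+\infty} e^{-(y-x)^2/(4t)}\,dy = O(\sqrt{t})$ (valid because $x \leq \ln t \ll \sqrt{t}/2$), this contribution is $O(x/t)$. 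Both error terms are thus of strictly lower order than the main term $\frac{x \ln t}{4t}$.

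Combining the three estimates and multiplying by $\frac{1}{\sqrt{\pi t}}(1+o(1))$ yields $v(t,x) = \frac{x \ln t}{4\sqrt{\pi}\, t^{3/2}}(1 + o(1))$ uniformly for $x \in (1, \ln t)$, which gives the desired two-sided bound for some $C_2 > C_1 > 0$ and $T$ large enough. The main obstacle is recognizing that the far-tail estimate requires the sharp inequality $\sinh(z) \leq z\,e^z$ rather than $\sinh(z) \leq e^z/2$; otherwise the tail contribution would appear to dominate the main term.
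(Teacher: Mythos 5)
Your proof is correct and takes essentially the same approach as the paper: both start from the method-of-images representation, split the integration range, and extract the dominant logarithmic contribution from the region $y\in(1,\sqrt t)$ where $v_0(y)=1/y^2$ and $\sinh$ linearizes into $\int_1^{\sqrt t} dy/y \asymp \ln t$. The only cosmetic difference is that the paper works in the rescaled variable and splits the far tail once more at $y=t/x$, whereas you handle the whole tail in one step via $\sinh z\le z e^z$ and completing the square.
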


In order to apply the comparison principle, let us  check that
\begin{equation}\label{ordre-zero}
u _0 \leq \overline{u} (t_0, \cdot ) \quad \text {on } \R.
\end{equation}
Proceeding as in the noncritical case, up to increasing the constant~$K$, it suffices to check this inequality in a neighborhood of $+\infty$. With the time $t_0$ now fixed, by the strong maximum principle and the positivity of the initial data $v_0$, see \eqref{v-zero}, there exists $\alpha \in (0,1)$ small enough such that
$$
v(t ,1 )  \geq  \alpha , \quad \forall t \in [0, t_0].
$$
Moreover, recall that
$$v_0 (x) = \frac{1}{x^2}, \quad \forall x\geq 1.$$
Also, since $\underline{v}(t,x) := \frac{\alpha}{x^2}$ satisfies
$$\partial_t \underline{v} = 0 <  \partial_{xx} \underline{v},\quad x \geq 1,
$$
by the comparison principle, we get that
$$
v (t_0,x) \geq  	 \frac{\alpha}{x^2},\quad \forall
x \geq 1.
$$
In particular, for all $x\geq 2t_0-\frac 3 2 \ln t_0+\ln \ln t_0+1$,
$$
\widehat{u} (t_0, x) \geq \left( 1 - \frac{M}{t_0^{1/4}} \right) \times \frac{ t_0^{3/2}}{\ln t_0} e^{2 t_0 - \frac{3}{2} \ln t_0 + \ln \ln t_0}  \times \frac{\alpha e^{-x}}{(x-2t_0 + \frac{3}{2} \ln t_0 - \ln \ln t_0)^2}.
$$
We can thus find~$K$ large enough so that
$$K \widehat{u} (t_0, x) \geq A \frac{e^{-x}}{x^2} \geq u_0(x),$$
on a neighborhood of $+\infty$, so that \eqref{ordre-zero} does hold.

We are now in a position to apply the comparison principle, yielding
$$u (t,x) \leq \overline{u} (t+t_0, x),$$
for all $t >0$ and $x \in \mathbb{R}$. The upper estimate on the position of the level sets finally follows, as detailed below.

First, from our results in the noncritical case (say for a chosen $k$ in $(-2,-1)$ for which $r>1$) and the comparison principle, we already know that
\begin{equation}\label{eq:we already know}
\limsup_{t \to +\infty}\sup_{x \geq 2 (t+t_0) - \ln (t+t_0)} u (t,x) = 0. 
\end{equation}
Next, for any $m \in (0,1)$, select $X>1$ large enough so that $KC_2Xe^{-X}<m$ (where $C_2$ comes from Lemma \ref{lem:postponed}) and define the interval
$$
I_t := \left( 2t - \frac{3}{2}\ln t + \ln \ln t + X, 2t - \ln t \right).
$$
Then we have
\begin{eqnarray*}
&& K^{-1} \times \limsup_{t \to +\infty}\sup_{x \in I_{t+t_0}} u(t,x) \\
&& \quad \leq   \limsup_{t \to +\infty}\sup_{x \in I_{t+t_0}} \widehat{u} (t+t_0,x) \\
&& \quad \leq \limsup_{t \to +\infty}\sup_{x \in I_{t+t_0}} \frac{(t+t_0)^{3/2}}{\ln (t+t_0)} e^{-(x-2 (t +t_0) + \frac{3}{2}\ln (t+t_0) - \ln \ln (t+t_0))} \\
&&\qquad \qquad  \qquad \qquad\qquad \qquad \qquad  \times v \left(t+t_0,x -2 (t+t_0) + \frac{3}{2}\ln (t+t_0)  - \ln \ln (t+t_0) \right)\\
&& \quad \leq \limsup_{t \to +\infty}\sup_{X \leq z  \leq \ln (t+t_0)} \frac{(t+t_0)^{3/2}}{\ln (t+t_0)} e^{-z}  \times v \left(t+t_0,z\right)\\
&& \quad \leq \limsup_{t \to +\infty}\sup_{X \leq z  \leq \ln (t+t_0)} C_2 z e^{-z} \\
&& \quad \leq    C_2 X e^{-X}\\
&& \quad <K^{-1} m,
\end{eqnarray*}
where we have used Lemma \ref{lem:postponed} again for the antepenultimate inequality. Putting this together with \eqref{eq:we already know}, we infer that there is $T_m>0$ large enough so that, for all $t\geq T_m$,
$$E_m (t) \subset \left(- \infty ,  2 (t+t_0) - \frac{3}{2} \ln (t+t_0) + \ln \ln (t+t_0) + X \right),$$
which completes the proof of \eqref{location-critical-case}.
%This completes the proof of Theorem \ref{th:critical}. 

Now fixing, say, $m=\frac 12$ and proceeding similarly, we may also find that, for any $t>0$ and~$y$ such that 
$$2t- \frac{3}{2}\ln t + \ln \ln t + y \in I_{t+t_0},$$
then 
\begin{eqnarray*}
& & u\left( t,2t - \frac{3}{2} \ln t + \ln \ln t + y \right) \\
%& \leq & K  \widehat{u} \left( t+t_0, 2t - \frac{3}{2} \ln t + \ln \ln t + y \right) \\
& \leq & K \frac{(t+t_0)^{3/2}}{\ln (t+t_0)} e^{-(y- 2 t_0 + \frac{3}{2}\ln \frac{t+t_0}{t} + \ln \ln t - \ln \ln (t+t_0))} \\
&&\qquad \qquad  \qquad \qquad\qquad  \times v \left(t+t_0, y- 2 t_0 + \frac{3}{2}\ln \frac{t+t_0}{t} + \ln \ln t - \ln \ln (t+t_0) \right)\\
&\leq & K  C_2 \times \left( y- 2 t_0 + \frac{3}{2}\ln \frac{t+t_0}{t} + \ln \ln t - \ln \ln (t+t_0) \right) e^{-(y- 2 t_0 + \frac{3}{2}\ln \frac{t+t_0}{t} + \ln \ln t - \ln \ln (t+t_0))}\\
&\leq  &  k_1 (y+1) e^{-y}, 
\end{eqnarray*}
for some $k_1>0$, and for all $t \geq T$ with $T>0$ large enough. Now, letting $\sigma_1 \in (0,1/2)$, we can assume up to increasing $T$ that, for all $t \geq T$,
$$y \in (2t_0 + X +1 , \sigma_1 \ln t) \Rightarrow 2t- \frac{3}{2}\ln t + \ln \ln t + y \in I_{t+t_0},$$
and then 
\begin{equation}\label{upper estimate front critical}
u \left( t, 2t - \frac{3}{2}\ln t + \ln \ln t + y \right) \leq k_1 (y+1) e^{-y},
\end{equation}
 for all $t \geq T$ and $2 t_0 + X +1  \leq y \leq \sigma_1 \ln t$. Since the left-hand term of~\eqref{upper estimate front critical} is bounded from above by $1$, and the right-hand term has a positive minimum on $[0,2t_0 + X+1]$, one can deduce that~\eqref{upper estimate front critical} holds true for all $t \geq T$ and $0 \leq y \leq \sigma_1 \ln t$, up to increasing~$k_1$. This proves the second inequality in~\eqref{critical estimate front}.
\qed
\medskip

It thus only remains to prove the three postponed lemmas on the function $v$ defined in Step 1.

\begin{proof}[Proof of Lemma~\ref{lem:critical_sub}] 
Notice that $e^{-(x-2t)} v(t,x-2t)$ solves the linearized equation
$$\partial_t u = \partial_{xx} u + u,$$
together with a Dirichlet boundary condition at $x = 2t$. Recall also that we have found in the noncritical case a family of super-solutions to this linear equation. Indeed, for any $k \in (-2,1)$ and $r = \frac{1-k}{2} >0$ (see Step~1 in subsection~\ref{ss:proof-upper} in the delay case), then 
$$\overline{u} (t,x ) = \left( 1 - \frac{M}{\sqrt{t}} \right) e^{-(x-2t + r \ln t )} \sqrt{t} \times w \left( \frac{x-2t + r \ln t}{\sqrt{t}} \right),
$$
where $w$ comes from Lemma~\ref{lem:ode-w}, is a positive super-solution to this linear equation for all $t \geq t_0$ (with $t_0$ large enough) and $x \geq 2t > 2t - r \ln t$.

By construction,
$$e^{-x} v (0,x) \leq x^k e^{-x},$$
for all $x > 1$. Therefore, proceeding similarly to the noncritical case, we may find that
$$e^{-(x-2t)} v (t,x-2t) \leq K \overline{u} (t+t_0, x),$$
for some large enough $K$, and any $t>0$, $x > 2t$. Due to the properties of $w$ from Lemma~\ref{lem:ode-w}, namely the fact that $\sup_{y \geq 0} w(y)/y < + \infty$, we get that
\begin{eqnarray*}
 e^{-(x-2t)} v (t,x-2t)   &\leq &  K e^{-(x-2 (t+ t_0) + r \ln (t+t_0))} \sqrt{t} \times w \left( \frac{x-2(t+t_0) + r \ln (t+t_0)}{\sqrt{t+t_0}} \right) \\
& \leq  & C e^{-(x-2t)}  \times t^{-r}  \times (x-2t + r \ln (t+t_0)) ,
\end{eqnarray*}
for some $C>0$. Since $r$ can be chosen arbitrarily close to $3/2$, we reach the conclusion. 
\end{proof}

\begin{proof}[Proof of Lemma~\ref{lem:critical_super}]
First, recall that
\begin{eqnarray*}
v(t,x) & = & \frac{1}{\sqrt{4 \pi t}} \int_0^{+\infty} \left(  e^{-\frac{(x-y)^2}{4t}} - e^{-\frac{(x+y)^2}{ 4t}} \right) v_0 (y) dy \\
& = & \frac{e^{-\frac{x^2}{4t}} }{\sqrt{\pi t}} \int_0^{+\infty}  e^{-\frac{y^2}{4t}} \sinh \left(\frac{xy}{2t}\right)  v_0 (y) dy .
\end{eqnarray*}
Using the second expression, we find that
$$
\partial_x v (t,x)  =  - \frac{x}{2t} v (t,x) + \frac{e^{-\frac{x^2}{4t}} }{\sqrt{\pi t}} \int_0^{+\infty}  \frac{y e^{-\frac{y^2}{4t}}}{2t} \cosh \left(\frac{xy}{2t}\right)  v_0 (y) dy  \geq  - \frac{x}{2t} v (t,x).
$$
It already follows that
$$
\frac{\partial_x v (t,x)}{v(t,x)  } \geq - \frac{1}{t^{1/4}}  \quad \text{ for all } t>0,\, 0\leq x \leq 2 t^{3/4}.
$$
Next, let us consider the case $x > 2 t^{3/4}$. On the one hand, for any $t>1$ (so that also $x - \sqrt{t}>1$), we have
\begin{eqnarray*}
v(t,x)&  \geq &  \frac{1}{\sqrt{4 \pi t}} \int_{x-\sqrt{t}}^{x+\sqrt{t}} \left(  e^{-\frac{(x-y)^2}{4t}} - e^{-\frac{(x+y)^2}{ 4t} } \right) v_0 (y) dy \\
& = & \frac{1}{\sqrt{4 \pi t}} \int_{-\sqrt{t} }^{\sqrt{t}} \left(  e^{-\frac{y^2}{4t}} - e^{-\frac{(2x+y)^2}{ 4t} } \right) \times \frac{1}{(x+y)^2} dy \\
& \geq & \frac{e^{-\frac 14}}{\sqrt{4 \pi t}} \int_{-\sqrt{t} }^{\sqrt{t}} \left(  1 - e^{-\frac{x^2+xy}{ t} } \right) \times \frac{1}{(x+y)^2} dy.
\end{eqnarray*}
But $-\sqrt t\leq y\leq \sqrt t$, $x>2t^{3/4}$ and $t>1$ imply  $\frac{x^2+xy}{ t}\geq 2t^{1/4}\geq 2$ so that  
$$
v(t,x) 
 \geq  \frac{e^{-\frac 1 4}}{x^2\sqrt{4 \pi t}}(1-e^{-2}) \int_{-\sqrt{t} }^{\sqrt{t}}  \frac{1}{(1+\frac y x)^2} dy  \geq \frac{\delta}{x^2},
$$
for some $\delta >0$. On the other hand,
\begin{eqnarray*}
\partial_x v (t,x) & = &   \frac{1}{\sqrt{4 \pi t}} \int_0^{+\infty} \left( - \frac{x-y}{2t} e^{-\frac{(x-y)^2}{4t}} + \frac{x+y}{2t} e^{-\frac{(x+y)^2}{4t} } \right) v_0 (y) dy \\
& \geq &   -  \frac{1}{\sqrt{4 \pi t}} \int_0^{+\infty} \frac{x-y}{2t} e^{-\frac{(x-y)^2}{4t}}  v_0 (y) dy \\
& = & - \frac{1}{\sqrt{4 \pi t}} (I_1 + I_2 + I_3),
\end{eqnarray*}
where $I_1$, $I_2$, and $I_3$ are obtained by integrating, respectively, over $(0,1)$, $(x(1-t^{-1/8}),x(1+t^{-1/8}))$, and the remaining intervals. 
Recalling \eqref{v-zero}, we have, for any $t\geq t_0$ with $t_0>0$ large enough and $x > 2 t^{3/4}$,
$$
I_1  =   \int_0^1 \frac{x-y}{2t} e^{-\frac{(x-y)^2}{4t}}  dy  \leq  \frac{x e^{-\frac{(x-1)^2}{4t}} }{2t},
$$
but also (in the sequel $C$ denotes a positive constant that may vary from one line to another)
\begin{eqnarray*}
I_2 & = &  \int_{x(1 - t^{-1/8}) }^{x(1 + t^{-1/8} ) } \frac{x-y}{2t} e^{-\frac{(x-y)^2}{4t}}  v_0 (y) dy \\
& = &  \int_{- x t^{-1/8}}^{x t^{-1/8}} \frac{y}{2t} e^{-\frac{y^2}{4t}} \times  \frac{1}{x^2 (1- \frac{y}{x})^2} dy \\
& \leq & \int_{- x t^{-1/8}}^{x t^{-1/8}} \frac{y}{2t} e^{-\frac{y^2}{4t}} \times \frac{ 1 + 2 \frac{y}{x}}{x^2} dy  + C \int_{- x t^{-1/8}}^{x t^{-1/8}} \frac{|y|^3}{t x^4 } e^{-\frac{y^2}{4t}}dy\\
& = &  2\int_{0}^{x t^{-1/8}} \frac{y^2}{tx^3} e^{-\frac{y^2}{4t}} dy+ C \int_{- x t^{-1/8}}^{x t^{-1/8}} \frac{|y|^3}{t x^4 } e^{-\frac{y^2}{4t}} dy \\
& \leq & \frac{2\sqrt{t}}{x^3} \int_{0}^{+\infty} y^2 e^{-\frac{y^2}{4}} dy+ \frac{ C t}{x^4} \int_{-\infty}^{+\infty} |y|^3 e^{-\frac{y^2}4 } dy \\
& \leq & \frac{C \sqrt{t}}{x^3}\quad(\text{since}\ x>2t^{3/4}),
\end{eqnarray*}
and finally
\begin{eqnarray*}
I_3 & = &  \left( \int_{1}^{x ( 1 - t^{-1/8}) }  + \int_{x ( 1 + t^{-1/8}) }^{+\infty} \right) \frac{x-y}{2t} e^{-\frac{(x-y)^2}{4t}}  v_0 (y) dy \\
& \leq & \left( \int_{1}^{x ( 1 - t^{-1/8})}  + \int_{x ( 1 + t^{-1/8}) }^{+\infty} \right) \frac{|x-y|}{2t} e^{-\frac{(x-y)^2}{4t}}   dy \\
& \leq  & \left( \int_{-\infty}^{-x t^{-1/8}}+ \int_{x  t^{-1/8} }^{+\infty} \right) \frac{|y|}{2t} e^{-\frac{y^2}{4t}}   dy \\ 
& \leq & \left( \int_{-\infty}^{-x t^{-5/8}}+ \int_{x t^{-5/8} }^{+\infty} \right) \frac{|z|}2  e^{-\frac{z^2}4}   dz \\
& \leq & C e^{-\frac{(x t^{-5/8})^2}4}.
\end{eqnarray*}
Using again the fact that $x > 2 t^{3/4}$, we have, for any $t\geq t_0$ with $t_0>0$ large enough,
$$
\frac{x^3}{\sqrt{t}} I_1 \leq \frac{x^4 e^{-\frac{(x-1)^2}{4t}}}{2t^{3/2}}\leq C t^{3/2} e^{-\sqrt{t}} \leq C,
$$
and
$$ \frac{x^3}{\sqrt{t}} I_3 \leq C \frac{x^3}{\sqrt{t}} e^{-\frac{(x t^{-5/8})^2}4} \leq C t^{7/4} e^{-t^{1/4}} \leq C.
$$
 Finally, putting all this together, we find that
$$\frac{\partial_x v(t,x)}{v(t,x)} \geq  \frac{- (I_1 + I_2 + I_3)}{\sqrt{4 \pi t} \times v (t,x)} \geq - \frac{C}{x ^3} \times \frac{x^2}{\delta} \geq - \frac{C}{t^{3/4}},$$
for any $t\geq t_0$ with $t_0>0$ large enough and $x > 2 t^{3/4}$. Putting this together with our previous estimate for $x \leq 2 t^{3/4}$, the proof of Lemma \ref{lem:critical_super} is complete.
\end{proof}

\begin{proof}[Proof of Lemma \ref{lem:postponed}]
Consider $t > e$ and $x \in (1, \ln t)$. Then, we have
\begin{eqnarray*}
v (t , x ) & = &  \frac{e^{-\frac{x^2}{4t}}}{\sqrt{4 \pi t}} \int_0^{+\infty} e^{-\frac{y^2}{4t}} \left( e^{\frac{yx}{2t}} - e^{-\frac{yx}{2t}} \right) v_0 (y) dy\\
& = &  \frac{e^{-\frac{x^2}{4t}}}{\sqrt{4 \pi}} \int_0^{+\infty} e^{-\frac{y^2}{4}} \left( e^{\frac{yx}{2\sqrt{t}}} - e^{-\frac{yx}{2\sqrt{t}}} \right) v_0 (\sqrt{t} y) dy\\
& = &  I_1 + I_2 + I_3 + I_4,
\end{eqnarray*}
where $I_i$ ($1\leq i\leq 4$) will be defined and estimated below. For the sake of expediency, in the sequel $f(t,x) \approx g(t,x)$ means that there is $C>0$ such that
$$
\frac{f(t,x)}{C g(t,x)}\to 1 \quad \text{ as $t\to +\infty$, uniformly with respect to $x \in (1, \ln t)$}.
$$
Now, recalling \eqref{v-zero}, 
\begin{eqnarray*}
I_1 & := &  \frac{e^{-\frac{x^2}{4t}}}{\sqrt{4 \pi }}  \int_0^{1/ \sqrt{t}} e^{-\frac{y^2}{4}} \left( e^{\frac{yx}{2\sqrt{t}}} - e^{-\frac{yx}{2\sqrt{t}}} \right)  dy\\
& \approx &  \int_0^{1/\sqrt{t}} \frac{yx}{ \sqrt{t}} dy \\
& \approx & \frac{x}{t^{3/2}} ,
\end{eqnarray*}
\begin{eqnarray*}
I_2 & := & \frac{e^{-\frac{x^2}{4t}}}{\sqrt{4 \pi }}   \int_{1/ \sqrt{t}}^{1} e^{-\frac{y^2}{4}} \left( e^{\frac{yx}{2\sqrt{t}}} - e^{-\frac{yx}{2\sqrt{t}}} \right)  \times \frac{1}{t y^2} dy\\
& \approx &  \int_{1/ \sqrt{t}}^{1}  \frac{x}{t^{3/2} y} dy \\
& \approx & \frac{x \ln t}{t^{3/2}},
\end{eqnarray*}
\begin{eqnarray*}
0\leq I_3 & := & \frac{e^{-\frac{x^2}{4t}}}{\sqrt{4 \pi }}   \int_1^{\sqrt{t}/x}  e^{-\frac{y^2}{4}} \left( e^{\frac{yx}{2\sqrt{t}}} - e^{-\frac{yx}{2\sqrt{t}}} \right)  \times \frac{1}{t y^2} dy\\
& \leq  &  C \int_1^{\sqrt{t}/x}  e^{-\frac{y^2}{4}}  \times \frac{x}{t^{3/2} y} dy \approx  \frac{x}{t^{3/2}} ,
\end{eqnarray*}
and, recalling also that $x \in (1,\ln t)$,
\begin{eqnarray*}
0\leq I_4 & := & \frac{e^{-\frac{x^2}{4t}}}{\sqrt{4 \pi }}   \int_{\sqrt{t}/x}^{+\infty}  e^{-\frac{y^2}{4}} \left( e^{\frac{yx}{2\sqrt{t}}} - e^{-\frac{yx}{2\sqrt{t}}} \right)  \times \frac{1}{t y^2} dy\\
& \leq  &  \frac{1}{t} \int_{\sqrt{t}/x}^{+\infty}  e^{-\frac{y^2}{8}} dy \\
&\leq & \frac 1 t \frac{x}{\sqrt t} \int_{\sqrt{t}/x}^{+\infty}  ye^{-\frac{y^2}{8}} dy\\
& \leq &  C \frac{x}{t^{3/2}} e^{-\frac{t}{8\ln ^2 t}}.
\end{eqnarray*}
As a result $v (t, x) = I_1 + I_2 + I_3 + I_4 \approx \frac{x\ln t}{t^{3/2}}$, which ends the proof.
\end{proof}

\section{Convergence of the profile}\label{s:proof of th}

In this section, we complete the proof of our main results, namely Theorems~\ref{th:bra} and~\ref{th:critical}. In both the noncritical and critical cases, we have already located the level sets, and derived some lower and upper estimates on the solution in the appropriate moving frame. Now, we address the convergence to a minimal traveling wave in that moving frame.

The proof, which relies on a Liouville-type result from~\cite{Ber-Ham-07}, closely parallels the one in~\cite[Section~4]{Ham-Nol-Roq-Ryz-13} to which we refer for further details. Notably, it proceeds exactly in the same manner for both the noncritical and critical cases. Therefore, we only provide the details in the former, and point out that in the latter, one should use Proposition~\ref{had to be done} instead of Propositions~\ref{prop:super-nancy} and~\ref{prop:lower}, and~\eqref{critical estimate front} instead of~\eqref{upper estimate front} and~\eqref{low estimate front}.

\begin{proof}[Proof of Theorem \ref{th:bra}]  First, recall from Propositions~\ref{prop:super-nancy} and~\ref{prop:lower} that, for any $0 < m < 1$, the $m$-level set  $E_m (t)$ of the solution satisfies 
\begin{equation}\label{sandwich-level-sets}
E_m (t) \subset ( 2t - r \ln t -C_m, 2t - r \ln t + C_m), \quad \forall t\geq T_m,
\end{equation}
for some $C_m >0$ and $T_m >0$.

We first set $C>0$ large enough such that
\begin{equation}\label{aaa}
Be^{-C}\le k_2\le k_1\le Be^C,
\end{equation}
where $k_1$ and $k_2$ are defined in Propositions~\ref{prop:super-nancy} and~\ref{prop:lower}, and $B$ comes from the asymptotics $U(z) \sim B z e^{-z}$ as $z \to +\infty$.
We will show that \eqref{asy} holds with this choice of $C$.

We argue by contradiction, and assume there are $\varepsilon>0$ and a
sequence of times $(t_n)_{n\in \mathbb{N}}$ such that $t_n\to+\infty$ as $n\to+\infty$ and
$$\inf_{|h|\le C}\; \Big\Vert u(t_n,\cdot)-U(\cdot-2t_n+r\ln t_n+h)\Big\Vert_{L^{\infty}(0,+\infty)}\ge \varepsilon, \quad \forall n\in \N.$$
Since $U(-\infty)=1$ and $U(+\infty)=0$, by estimate \eqref{sandwich-level-sets} on the position of the level sets, we can find $\kappa>0$ such that
\begin{equation}\label{bbb}
\inf_{|h|\le C}\; \Big(\max_{|x|\le \kappa}\Big| u(t_n,x+2t_n-r\ln t_n)-U(x+h)\Big|\Big)\ge \varepsilon, \quad \forall n\in \N.
\end{equation}
Up to extraction, the sequence of  functions
$$
u_n(t,x) :=u(t+t_n,x+2t_n-r\ln t_n), \quad t\in \R, x\in \R,
$$
converges locally uniformly in $\R^2$ to some $0\leq u_{\infty}=u_{\infty}(t,x)\leq 1$ which solves \eqref{eq} on $\R^2$. Furthermore, we have
\begin{equation}\label{ccc}
\lim_{y\to+\infty}\Big(\sup_{(t,x)\in\mathbb{R}^2,\ x\ge 2t+y}u_{\infty}(t,x)\Big)=0\quad \text{and}\quad \lim_{y\to-\infty}\Big(\inf_{(t,x)\in\mathbb{R}^2,\ x\le 2t+y}u_{\infty}(t,x)\Big)=1.
\end{equation}
Now, we fix $t\in\mathbb{R}$ and $y>0$, and define
$$
y_n:=y+r\ln \frac{t+t_n}{t_n}.
$$
For $n$ sufficiently large, we have  $t+t_n\ge 1$ and $0\leq y_n \leq \min\{ \sigma_1, \sigma_2 \} \sqrt{t+t_n}$, so that we deduce from~\eqref{upper estimate front} and \eqref{low estimate front} that
$$
k_2y_ne^{-y_n}\le u_n(t,2t+y)\le k_1(y_n+1)e^{-y_n}.
$$
Passing to the limit as $n \to +\infty$, we obtain
\begin{equation}\label{ddd}
k_2ye^{-y}\le u_{\infty}(t,2t+y)\le k_1(y+1)e^{-y}\quad \text{ for any } t\in\mathbb{R}, y\ge 0.
\end{equation}
Therefore, a Liouville-type theorem, see~\cite[Lemma~4.1]{Ham-Nol-Roq-Ryz-13} and~\cite[Theorem~3.5]{Ber-Ham-07}, implies that there exists $h_0\in \R$ such that the time-global solution~$u_{\infty}$ satisfies
\begin{equation}\label{eee}
u_{\infty}(t,x)=U(x-2t+h_0), \quad \forall (t,x)\in\mathbb{R}^2.
\end{equation}

We are now ready to complete the proof. Recall that $U(z) \sim B z e^{-z}$ as $z \to +\infty$. Thus, from~\eqref{ddd} and~\eqref{eee}, we have $k_2 \leq Be^{-h_0} \leq k_1$ which, in view of \eqref{aaa}, enforces  $|h_0|\le C$. However, $u_n(0,x)\to u_\infty(0,x)$ uniformly for $x\in[-\kappa,\kappa]$ means nothing else than
$$
\max_{|x|\le\kappa}\; \Big|u(t_n,x+2t_n-r\ln t_n)-U(x+h_0)\Big|\to 0\quad \text{ as } \ n\to+\infty,
$$
which contradicts \eqref{bbb} since $|h_0|\le C$. Therefore, the proof of Theorem~\ref{th:bra} is complete.
\end{proof}
\bigskip

\noindent{\bf Acknowledgement.}  M. Alfaro is supported by the {\it région Normandie} project BIOMA-NORMAN 21E04343 and the ANR project DEEV ANR-20-CE40-0011-01. T. Giletti is supported by the ANR projects Indyana ANR-21-CE40-0008 and Reach ANR-23-CE40-0023-01. D. Xiao is supported by the Japan Society for the Promotion of Science P-23314.

\bibliographystyle{siam}  

\bibliography{biblio}

\def\cprime{$'$}
\begin{thebibliography}{10}

\bibitem{Abr-Ste-64}
{\sc M.~Abramowitz and I.~A. Stegun}, {\em Handbook of mathematical functions
  with formulas, graphs, and mathematical tables}, vol.~55 of National Bureau
  of Standards Applied Mathematics Series, For sale by the Superintendent of
  Documents, U.S. Government Printing Office, Washington, D.C., 1964.

\bibitem{An-23}
{\sc J.~An, C.~Henderson, and L.~Ryzhik}, {\em Front location determines
  convergence rate to traveling waves}, 2023.

\bibitem{An-Hen-Ryz-23}
\leavevmode\vrule height 2pt depth -1.6pt width 23pt, {\em Quantitative
  steepness, semi-{FKPP} reactions, and pushmi-pullyu fronts}, Arch. Ration.
  Mech. Anal., 247 (2023), p.~84.
\newblock Id/No 88.

\bibitem{Aro-Wei-78}
{\sc D.~G. Aronson and H.~F. Weinberger}, {\em Multidimensional nonlinear
  diffusion arising in population genetics}, Adv. in Math., 30 (1978),
  pp.~33--76.

\bibitem{Ber-Ham-07}
{\sc H.~Berestycki and F.~Hamel}, {\em Generalized travelling waves for
  reaction-diffusion equations}, in Perspectives in nonlinear partial
  differential equations in honor of Ha\"{\i}m Brezis. Based on the conference
  celebration of Ha\"{\i}m Brezis' 60th birthday, June 21--25, 2004,
  Providence, RI: American Mathematical Society (AMS), 2007, pp.~101--123.

\bibitem{Ber-Bru-17}
{\sc J.~Berestycki, E.~Brunet, S.~C. Harris, and M.~Roberts}, {\em Vanishing
  corrections for the position in a linear model of {FKPP} fronts}, Comm. Math.
  Phys., 349 (2017), pp.~857--893.

\bibitem{BFRZ}
{\sc C.~Besse, G.~Faye, J.-M. Roquejoffre, and M.~Zhang}, {\em The logarithmic
  {Bramson} correction for {Fisher}-{KPP} equations on the lattice
  {{\(\mathbb{Z} \)}}}, Trans. Am. Math. Soc., 376 (2023), pp.~8553--8619.

\bibitem{Bou-Hen-Ryz-20}
{\sc E.~Bouin, C.~Henderson, and L.~Ryzhik}, {\em The {B}ramson delay in the
  non-local {F}isher-{KPP} equation}, Ann. Inst. H. Poincar\'{e} Anal. Non
  Lin\'{e}aire, 37 (2020), pp.~51--77.

\bibitem{Bou-24}
{\sc N.~Boutillon}, {\em Large deviations and the emergence of a logarithmic
  delay in a nonlocal linearised {F}isher--{KPP} equation}, Nonlinear Analysis,
  240 (2024), p.~113465.

\bibitem{Bra-83}
{\sc M.~Bramson}, {\em Convergence of solutions of the Kolmogorov equation to
  travelling waves}, vol.~285, American Mathematical Soc., 1983.

\bibitem{Bra-78}
{\sc M.~D. Bramson}, {\em Maximal displacement of branching brownian motion},
  Communications on Pure and Applied Mathematics, 31 (1978), pp.~531--581.

\bibitem{Bru-Der-97}
{\sc E.~Brunet and B.~Derrida}, {\em Shift in the velocity of a front due to a
  cutoff}, Physical Review E, 56 (1997), p.~2597.

\bibitem{Ebe-Saa-00}
{\sc U.~Ebert and W.~van Saarloos}, {\em Front propagation into unstable
  states: universal algebraic convergence towards uniformly translating pulled
  fronts}, Phys. D, 146 (2000), pp.~1--99.

\bibitem{Fis-37}
{\sc R.~A. Fisher}, {\em The wave of advance of advantageous genes}, Ann. of
  Eugenics, 7 (1937), pp.~355--369.

\bibitem{Gil-21}
{\sc T.~Giletti}, {\em Monostable pulled fronts and logarithmic drifts}, NoDEA,
  Nonlinear Differ. Equ. Appl., 29 (2022), p.~42.
\newblock Id/No 35.

\bibitem{Gra-22}
{\sc C.~Graham}, {\em The {B}ramson correction for integro-differential
  {F}isher-{KPP} equations}, Commun. Math. Sci., 20 (2022), pp.~563--596.

\bibitem{Ham-Nad-12}
{\sc F.~Hamel and G.~Nadin}, {\em Spreading properties and complex dynamics for
  monostable reaction-diffusion equations}, Comm. Partial Differential
  Equations, 37 (2012), pp.~511--537.

\bibitem{Ham-Nol-Roq-Ryz-13}
{\sc F.~Hamel, J.~Nolen, J.-M. Roquejoffre, and L.~Ryzhik}, {\em A short proof
  of the logarithmic {B}ramson correction in {F}isher-{KPP} equations}, Netw.
  Heterog. Media, 8 (2013), pp.~275--289.

\bibitem{Ham-Roq-10}
{\sc F.~Hamel and L.~Roques}, {\em Fast propagation for {KPP} equations with
  slowly decaying initial conditions}, J. Differential Equations, 249 (2010),
  pp.~1726--1745.

\bibitem{Har-Wei-82}
{\sc A.~Haraux and F.~B. Weissler}, {\em Nonuniqueness for a semilinear initial
  value problem}, Indiana Univ. Math. J., 31 (1982), pp.~167--189.

\bibitem{Kol-Pet-Pis-37}
{\sc A.~N. Kolmogorov, I.~G. Petrovsky, and N.~S. Piskunov}, {\em Etude de
  l'\'equation de la diffusion avec croissance de la quantit\'e de mati\`{e}re
  et son application \`{a} un probl\`{e}me biologique}, Bull. Univ. Etat
  Moscou, S\'er. Inter. A 1 (1937), pp.~1--26.

\bibitem{Lau-85}
{\sc K.-S. Lau}, {\em On the nonlinear diffusion equation of {K}olmogorov,
  {P}etrovsky, and {P}iscounov}, Journal of Differential Equations, 59 (1985),
  pp.~44--70.

\bibitem{Nol-Roq-Ryz-17}
{\sc J.~Nolen, J.-M. Roquejoffre, and L.~Ryzhik}, {\em Convergence to a single
  wave in the {Fisher}-{KPP} equation}, Chin. Ann. Math., Ser. B, 38 (2017),
  pp.~629--646.

\bibitem{Nol-Roq-Ryz-19}
\leavevmode\vrule height 2pt depth -1.6pt width 23pt, {\em Refined long-time
  asymptotics for {Fisher}-{KPP} fronts}, Commun. Contemp. Math., 21 (2019),
  p.~25.
\newblock Id/No 1850072.

\bibitem{Rothe-78}
{\sc F.~Rothe}, {\em Convergence to travelling fronts in semilinear parabolic
  equations}, Proc. R. Soc. Edinb., Sect. A, Math., 80 (1978), pp.~213--234.

\bibitem{Uch-78}
{\sc K.~Uchiyama}, {\em The behavior of solutions of some nonlinear diffusion
  equations for large time}, J. Math. Kyoto Univ., 18 (1978), pp.~453--508.

\bibitem{Van-03}
{\sc W.~Van~Saarloos}, {\em Front propagation into unstable states}, Physics
  reports, 386 (2003), pp.~29--222.

\end{thebibliography}

\end{document}